\documentclass[11pt,twoside,a4paper]{article}
\usepackage[english]{babel}
\usepackage{a4wide}
\usepackage{graphicx}
\usepackage{amsmath,amssymb,amsthm, amsgen}
\usepackage{listings} 
\usepackage{color}
\usepackage[usenames,dvipsnames]{xcolor}
\usepackage{rotating}
\usepackage{hhline}
\usepackage{multirow}
\usepackage{enumerate}
\usepackage[bookmarks]{}
\usepackage{amsfonts}   
\usepackage{dsfont}
\usepackage{tikz} 
\usepackage{booktabs}
\usepackage{longtable}
\usepackage{stmaryrd} 
\usepackage{url}
\usepackage{environ}

\newtheorem{thm}{Theorem}[section]
\newtheorem{prop}[thm]{Proposition}
\newtheorem{lem}[thm]{Lemma}

\newtheorem{defn}[thm]{Definition}

\newtheorem{assn}[thm]{Assumption}
\theoremstyle{definition}

\newtheorem{ex}[thm]{Example}

\newcommand{\lrang}[1]{\left\langle {#1} \right\rangle}

\newcommand{\weakto}{ \rightharpoonup }

\newcommand{\E}{\mathbb E}
\newcommand{\e}{\varepsilon}
\newcommand{\N}{\mathbb N}

\renewcommand{\P}{\mathbb P}
\newcommand{\Q}{\mathbb Q}
\newcommand{\R}{\mathbb R}
\newcommand{\T}{\mathbb T}

\newcommand{\Z}{\mathbb Z}

\newcommand{\bone}{\mathbf 1}

\newcommand{\cC}{\mathcal C}
\newcommand{\cD}{\mathcal D}
\newcommand{\cE}{\mathcal E}
\newcommand{\cF}{\mathcal F}
\newcommand{\cG}{\mathcal G}

\newcommand{\cK}{\mathcal K}
\newcommand{\cL}{\mathcal L}
\newcommand{\cM}{\mathcal M}

\newcommand{\cP}{\mathcal P}

\newcommand{\cS}{\mathcal S}
\newcommand{\cT}{\mathcal T}

\newcommand{\cW}{\mathcal W}

\newcommand{\ov}[1]{\overline{#1}}

\NewEnviron{intermezzo}{
    \par\bigskip 
    \addtolength{\leftskip}{10mm} 
    \addtolength{\rightskip}{10mm} 
    \footnotesize 
    \noindent 
    \BODY 
    \par\bigskip 
}

\DeclareFontFamily{U}{mathx}{\hyphenchar\font45}
\DeclareFontShape{U}{mathx}{m}{n}{
      <5> <6> <7> <8> <9> <10>
      <10.95> <12> <14.4> <17.28> <20.74> <24.88>
      mathx10
      }{}
\DeclareSymbolFont{mathx}{U}{mathx}{m}{n}
\DeclareFontSubstitution{U}{mathx}{m}{n}
\DeclareMathAccent{\widecheck}{0}{mathx}{"71}

\definecolor{darkgreen}{rgb}{0,0.7,0}
\definecolor{orange}{rgb}{1,0.45,0}
\long\def\comm#1{{\color{orange}}} 

\allowdisplaybreaks

\title{Hydrodynamic limit for Glauber-Kawasaki dynamics\\
on the Sierpi\'nski gasket}
\author{Patrick van Meurs$\,^{1)}$, Kenkichi Tsunoda$\,^{2)}$}
\date{} 
 
\begin{document}

\maketitle


\begin{abstract}
We prove the hydrodynamic limit for Glauber-Kawasaki dynamics on the Sierpi\'nski gasket, a prototypical fractal graph that lacks translational invariance. The main novelty lies in incorporating Glauber dynamics, allowing for particle creation and annihilation with birth-death rates depending locally on the particle configuration. In the macroscopic limit, the particle density evolves according to a nonlinear reaction--diffusion equation, where the reaction term is explicitly determined by the microscopic rates. The key new ingredient is a replacement lemma adapted to the fractal geometry of the Sierpi\'nski gasket. We establish this lemma by deriving 1-block and 2-blocks estimates on the Sierpi\'nski gasket graph, which require new arguments due to the absence of classical lattice structures.

\footnote{
\hskip -6mm 
${}^{1)}$ Faculty of Mathematics and Physics, Kanazawa University, Kakuma, Kanazawa 920-1192, Japan.
e-mail: pjpvmeurs@staff.kanazawa-u.ac.jp \\
${}^{2)}$ Faculty of Mathematics, Kyushu University, 744 Motooka, Nishi-ku, Fukuoka, 819-0395, Japan. e-mail: tsunoda@math.kyushu-u.ac.jp}
\footnote{
\hskip -6mm
\textbf{Keywords}: Interacting particle system, hydrodynamic limit, Sierpi\'nski gasket.}
\footnote{
\hskip -6mm
\textbf{2020MSC}: 60K35, 82C22, 28A80. 
}
\end{abstract}

\tableofcontents

\section{Introduction}
\label{s:intro}

We aim to contribute to the study of the universality of hydrodynamic limits for interacting particle systems with respect to the underlying discrete space. Classically, this space is a lattice, such as the Euclidean lattice or a discrete torus. Lattices possess translation invariance, homogeneous volume growth, and simple local geometry. These properties play a crucial role in classical techniques for proving hydrodynamic limits, such as entropy methods, replacement lemmas, and block estimates \cite[Chapter 5]{KL}.

However, many physical and biological systems evolve in heterogeneous or irregular media. Modelling such media by a lattice is a crude approximation. Indeed, porous materials, polymer networks, dendritic structures, and certain biological tissues exhibit geometric complexity incompatible with Euclidean homogeneity. In such environments, several analytic properties change: volume growth may be irregular, diffusion may become anomalous, and resistance rather than Euclidean distance becomes the natural metric.

Only recently, several works on hydrodynamic limits beyond lattices have been established. For instance, \cite{Tan} studies the hydrodynamic limit of exclusion processes on a crystal lattice; however, the macroscopic space is still a continuum torus. \cite{vGR} studies the hydrodynamic limit on a compact Riemannian manifold. Recently, a generalization to non-compact Riemannian manifolds was established in \cite{JRV}.

In addition, \cite{Jar,CG} establish hydrodynamic limits on a specific fractal: the Sierpi\'nski gasket (see Figure \ref{fig:Sier} below). The gasket is a canonical example of a self-similar set supporting a well-developed analytic structure, including a resistance form, a Laplacian, and sub-Gaussian heat kernel estimates. At the same time, it lacks translation invariance and has bottlenecks (also called hot spots). To handle these features, \cite{Jar,CG} have built new techniques to prove hydrodynamics limits.

Our aim is to build further on these two works. We briefly review them. In \cite{Jar}, the zero-range process with Glauber dynamics at the three boundary points of the Sierpi\'nski gasket is considered. The Glauber dynamics can be interpreted as reservoirs attached to each boundary site, each having a fixed particle density. The main result of \cite{Jar} is the hydrodynamic limit of the particle system, in which the particle density satisfies a nonlinear heat equation with Dirichlet boundary conditions determined by the reservoir densities. The proof relies on the $H^{-1}$-method, based on sharp asymptotics of the Green function of the gasket.

Second, \cite{CG} considers the symmetric simple exclusion process (Kawasaki dynamics) with similar Glauber dynamics at the boundary. An important generalization is that the scaling of the Glauber rates is regulated by a parameter $b > 0$; for larger values of $b$, the Glauber dynamics becomes less effective. One of the main results of \cite{CG} is the hydrodynamic limit, in which the particle density satisfies the heat equation. The boundary condition depends on $b$: for small $b$ it is of Dirichlet type, for large $b$ it is of Neumann type, and for the critical value $b = \frac{5}{3}$ it is of Robin type. The proof relies on the classical approach of passing to the limit in the associated martingale problems. However, the classical version of the \textit{moving particle lemma} does not apply. On a lattice, this lemma allows one to decompose long-range particle exchanges into nearest-neighbor swaps along a shortest path. Instead, \cite{Chen2} constructs a new moving particle lemma on the gasket, whose proof uses the self-similar structure of the gasket. Using this result, \cite{CG} establishes the hydrodynamic limit described above. They also study dynamical fluctuations around a fixed particle density when the boundary densities are equal.

The main result of the present paper (Theorems \ref{t} and \ref{t:R}) extends the hydrodynamic limit in \cite{CG} to the case where Glauber dynamics is added at each site. We allow for a class of Glauber rates that may depend locally on the particle configuration, thereby enriching the interactions between particles. In the hydrodynamic limit, the Glauber term manifests itself as a nonlinear reaction term in the heat equation. The boundary condition remains the same as in \cite{CG}. 

A notable difference between our result and that of \cite{CG} is that the limiting equation is nonlinear, which is in contrast with the linear equation in \cite{CG}. The presence of the nonlinear reaction term makes the proof much more delicate; it requires a new so-called \textit{replacement lemma}. We prove it by establishing classical 1-block and 2-blocks estimates, which appear to be new on the Sierpi\'nski gasket. Indeed, inspired by the results of \cite{JLS,JLS2}, a local version of the 1-block estimate was established in \cite{Jar}, but no 2-blocks estimate was established. Another related study appears in the preprint \cite{Chen}. 

One of the key ingredients in our proof of the block estimates is the moving particle lemma established in \cite{CG}. In addition, as mentioned above, the lack of translation invariance prevents us from using the classical approach to establish block estimates. To overcome this difficulty, we instead exploit the self-similar structure of the Sierpi\'nski gasket.

With the 2-blocks estimate established in this paper, we pave the way for studying the corresponding fluctuation and large deviation problems. Although fluctuations for this model have been studied in \cite{CG}, the corresponding replacement lemma (often referred to as the Boltzmann-Gibbs principle) was not needed at the fluctuation level since their hydrodynamic equation is linear in the bulk. Our proof of the block estimates is intended as a prototype for continuing the study of fluctuations; see, for example, \cite[Chapter 10]{KL}. Indeed, in the Euclidean setting, it is well known that replacement lemmas play a crucial role in establishing dynamical large deviation principles from hydrodynamic limits \cite{KOV}. For the Glauber-Kawasaki dynamics, dynamical large deviation principles are studied in \cite{JLV,LT}, as well as static large deviation principles in \cite{FLT}. We leave the extension of these results to the case of the Sierpi\'nski gasket for future work.

The paper is organized as follows. In Section \ref{sec:model}, we introduce our model and state the main results precisely for the case $b < \frac53$. A self-contained introduction to the Sierpi\'nski gasket and its discretization is also provided. In Section \ref{s:pf}, we give a sketch of the proof of our main result. Since the overall strategy is similar to that in \cite{CG}, except for the replacement lemma, we highlight only the necessary modifications. In Section \ref{s:repl}, we present the proof of the replacement lemma in full detail. In Section \ref{s:b}, we establish the hydrodynamic limit in the remaining case $b \geq \frac{5}{3}$.

\section{Model and main result}\label{sec:model}

In this section, we introduce the particle system, the reaction--diffusion equation, and finally the hydrodynamic limit (Theorem \ref{t}), which connects the two. Theorem \ref{t} covers one part of the parameter regime; Theorem \ref{t:R} covers the remaining part.

\subsection{The Sierpi\'nski gasket $K$ and its discretization}
\label{s:SG}

In this section, we introduce the Sierpi\'nski gasket $K$ and a discretization of it given in terms of a graph $\cG_N = (V_N, E_N)$. Figure \ref{fig:Sier} illustrates the corresponding definitions. We refer to the textbooks \cite{Kig,Str} for a more complete treatment of fractals and the Sierpi\'nski gasket.

\begin{figure}[h!]
  \centering
  \includegraphics{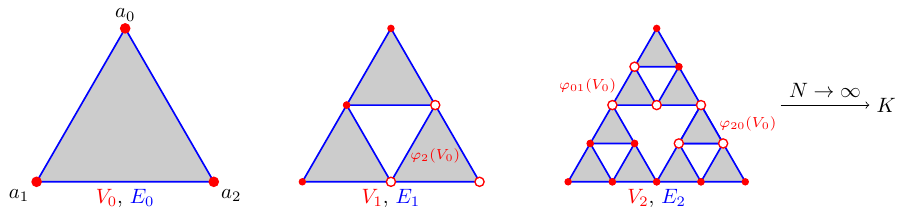} 
  \caption{Discretizations of the Sierpi\'nski gasket at depths $N=0,1,2$. $\varphi_w(V_0)$ is highlighted for several words $w$. }
 \label{fig:Sier}
\end{figure}

Let $V_0 = \{a_0, a_1, a_2\} \subset \R^2$ be the 3 boundary sites of the equilateral triangle given by
\[
  a_0 = \begin{bmatrix} \cos ( \tfrac\pi3 ) \\ \sin ( \tfrac\pi3 ) \end{bmatrix} , \qquad
  a_1 = \begin{bmatrix} 0 \\ 0 \end{bmatrix}, \qquad
  a_2 = \begin{bmatrix} 1 \\ 0 \end{bmatrix}.
\]
Let 
\[
  \varphi_i : \R^2 \to \R^2, \qquad
  \varphi_i(x) := \frac{x + a_i}2 \qquad
  \text{for } i = 0,1,2
\]
be the three corresponding contractive similitudes. The Sierpi\'nski gasket $K \subset \R^2$ is the unique solution  of
\[
  K = \bigcup_{i=0}^2 \varphi_i(K)
\]
over all compact, nonempty subsets of $\R^2$.

To identify self-similar subcomponents of $K$, we introduce the alphabet $\{0,1,2\}$. We denote words by $w = w_1 w_2 \cdots w_j$, where $j \ge 0$, $w_i \in \{0,1,2\}$ are letters and $|w| := j$ is the length. The only difference between words and the ternary numbers is that words may start with a string of zeroes. We set $\varphi_w := \varphi_{w_1} \circ \varphi_{w_2} \circ \cdots \circ \varphi_{w_j}$. Then, $K_w := \varphi_w(K) \subset K$ is a miniature (self-similar) version of $K$. We call $K_w$ a $j$-cell if $|w| = j$.

Next, we construct the sequence of graphs $\cG_N = (V_N, E_N)$ indexed over $N \in \N$, which are increasingly finer discretizations of $K$. Let 
\[
  V_N := \bigcup_{|w| = N} \varphi_w(V_0),
\]
and $E_N$ be all pairs of sites $x,y \in V_N$ (denoted by $x \sim y$) for which there exists a word $w$ with $|w| = N$ for which $x,y \in \varphi_w(V_0)$. It follows that
\begin{equation} \label{VN:EN:abs}
  |V_N| = \frac32 (3^N+1), \qquad
  |E_N| = 3^{N+1}.
\end{equation}
We introduce 
\begin{equation*}
  V_N^0 := V_N \setminus V_0
\end{equation*}
for the set of sites without the boundary sites.

Let $\delta_x$ be the Dirac measure at $x\in\R^2$ and 
\begin{equation*}
  m_N := \frac1{|V_N|} \sum_{x \in V_N} \delta_x
\end{equation*}
the probability measure of the uniform distribution over $V_N$. It converges weakly to $m$ as $N \to \infty$, where the limit measure $m$ is the self-similar probability measure on $K$, which equals $d_H := \frac{\log 3}{\log 2}$ times the $d_H$-dimensional Hausdorff measure restricted to $K$. We fix $(K,m)$ as the measure space related to $K$.

\subsection{Glauber-Kawasaki dynamics on $\cG_N$}
\label{s:GK}

The Glauber-Kawasaki dynamics that we consider is a Markov process $\{\eta_t^N : t \geq 0\}$ on $\Omega_N := \{0,1\}^{V_N}$. It is generated by the sped up operator $5^N \cL_N$ acting on funtions $\varphi:\Omega_N\to\R$ as
\begin{align*}
\cL_N \varphi &:= \cL_N^K\varphi + 5^{-N} \cL_N^G\varphi + b^{-N} \cL_N^B\varphi, \\
  \cL_N^K \varphi (\eta) &:= \sum_{x \in V_N} \sum_{\substack{ y \in V_N \\ y \sim x }} \eta(x)[1 - \eta(y)] [\varphi(\eta^{xy}) - \varphi(\eta)], \\
  \cL_N^G \varphi (\eta) &:= \sum_{x \in V_N^0} c_x ( \eta ) [\varphi(\eta^x) - \varphi(\eta)], \\
  \cL_N^B \varphi (\eta) &:= \sum_{a \in V_0} [ \lambda_-(a) \eta(a) + \lambda_+(a) (1-\eta(a)) ] [\varphi(\eta^a) - \varphi(\eta)],
\end{align*}
where, as usual,
\begin{equation} \label{eta:xy} 
   \eta^{xy}(z)
   := \left\{ \begin{aligned}
     &\eta(y)
     &&\text{if } z=x \\
     &\eta(x)
     &&\text{if } z=y \\
     &\eta(z)
     &&\text{otherwise }
   \end{aligned} \right.
\end{equation} 
is the configuration obtained from $\eta$ by swapping the occupation variables at $x$ and $y$, and
\begin{equation*}
   \eta^x(z)
   := \left\{ \begin{aligned}
     &1-\eta(z)
     &&\text{if } z=x \\
     &\eta(z)
     &&\text{otherwise }
   \end{aligned} \right.
\end{equation*} 
is the configuration obtained from $\eta$ by flipping the occupation variable at $x$. Moreover, $b > 0$, $\lambda_\pm : V_0 \to (0,\infty)$ and $c_x : \Omega_N \to (0,\infty)$ for all $x \in V_N$ are given parameters. 

The generator $\cL_N^K$ describes the Kawasaki dynamics; for each edge $x \sim y$, the occupation variables $\eta(x)$ and $\eta(y)$ swap at unit rate. Next, $\cL_N^G$ describes the Glauber dynamics; at each site $x \in V_N^0$, $\eta(x)$ flips (i.e.\ turns into $1-\eta(x)$) at rate $c_x(\eta)$. The rate $c_x(\eta)$ may depend on $\eta$ at sites other than $x$, and can therefore describe particle interactions. We describe the assumptions on $c_x(\eta)$ below in Assumption \ref{a:cx}. Finally, $\cL_N^B$ describes the Glauber dynamics at the three boundary points $a \in V_0$. This could technically be unified with $\cL_N^G$, but we keep them separate because:
\begin{enumerate}
  \item their scaling in $\cL_N$ is different; it is regulated by $b > 0$, and
  \item at $a \in V_0$ the Glauber rates are simpler; they depend on $\eta$ only through $\eta(a)$. The corresponding birth-death rates $\lambda_\pm(a) > 0$ are commonly interpreted as jumps to and from reservoirs; one for each $a \in V_0$. 
\end{enumerate}

Next we focus on $c_x (\eta)$. Formally speaking, we assume that it is positive, local as a function of $\eta$, and self-similar in the sense that, rather than on $x$, it only depends on the local, self-similar structure of $\cG_N$ around $x$. Next, we introduce these assumptions rigorously. 

For $x,y \in V_N$, let $L(x,y) \in \N$ be the length of the shortest path that connects $x$ and $y$ on $\cG_N$. Take $L_0 \geq 1$ as a fixed integer. Let
 \begin{equation} \label{Lambda:x}
    \Lambda_x 
    := \{ y \in V_N : L(x,y) \leq L_0\} 
    \quad
    \text{for } x \in V_N^0
  \end{equation}
be a discrete neighborhood of $x$ on $V_N$. Note that $|\Lambda_x|$ is uniformly bounded in $N$. We define the translated and rescaled version 
\begin{equation*}
  2^N(\Lambda_x - x) :=   \{ 2^N(y -x) : y \in \Lambda_x \} \subset \R^2,
\end{equation*}  
which is anchored at the origin. The rescaling is such that neighboring sites are separated by a unit distance. For $x,y \in V_N$ we define
\begin{equation*}
  \Lambda_x \sim \Lambda_y
  \ :\Longleftrightarrow \
  \Lambda_x - x = \Lambda_y - y.
\end{equation*}
Note that $\Lambda_x \sim \Lambda_y$ if and only if they have the same shape and rotation. By the self-similar structure of $V_N$, the set
\begin{equation*}
   \cS := \{ 2^N(\Lambda_x - x) : x \in V_N^0 \}
 \end{equation*} 
is independent of $N$ for $N$ large enough with respect to $L_0$. Figure \ref{fig:S} illustrates for small $L_0$ all elements $\Lambda \in \cS$ of different shapes. The rigorous description of our assumptions on $c_x$ is as follows.

\begin{assn}\label{a:cx}
There exists $L_0 \geq 1$ and a family of local functions $c(\cdot; \Lambda) : \{0,1\}^\Lambda \to (0,\infty)$ parametrized by $\Lambda \in \cS$ such that for all $N$ large enough, all $x \in V_N^0$ and all $\eta \in \Omega_N$
  \begin{equation} \label{cx}
    c_x(\eta) = c \big( \eta|_{\Lambda_x}; 2^N(\Lambda_x - x) \big).
  \end{equation} 
\end{assn}

Four remarks are in order. 
First, in \eqref{cx} we abuse notation, since the domain of $\eta|_{\Lambda_x}$ is $\Lambda_x$ whereas it should be $2^N(\Lambda_x - x)$. However, as graphs, they are equal; the only difference is the spatial location of the sites, which are linearly mapped to one another by translation and scaling. Moreover, $\eta|_{\Lambda_x}$ is a vector of $0$'s and $1$'s, and thus changing the domain in this manner amounts merely to a change of notation for the index.
Second, $c_x(\eta)$ depends on $N$, but we do not adopt this in the notation.  
Third, $\{ c(\xi; \Lambda) : \Lambda \in \cS, \ \xi \in \{0,1\}^\Lambda \}$ is a finite set of positive numbers independent of $N$ for all $N$ large enough. Hence, $c_x(\eta) > 0$ and  
\begin{equation} \label{cinf}
  |c_x(\eta)|
  \leq \max_{\Lambda \in \cS} \max_{\xi \in \{0,1\}^\Lambda} c(\xi; \Lambda)
  =: \|c\|_\infty < \infty,
\end{equation}
both uniformly in $x,\eta,N$. 
Fourth, if $L(x,a) \leq L_0 - 1$ for some $a \in V_0$, then $\Lambda_x \not\sim \Lambda_y$ for all $y \in V_N^0 \setminus \{x\}$. The set of all such sites $x$ is finite uniformly in $N$ and does therefore not play a significant role. 

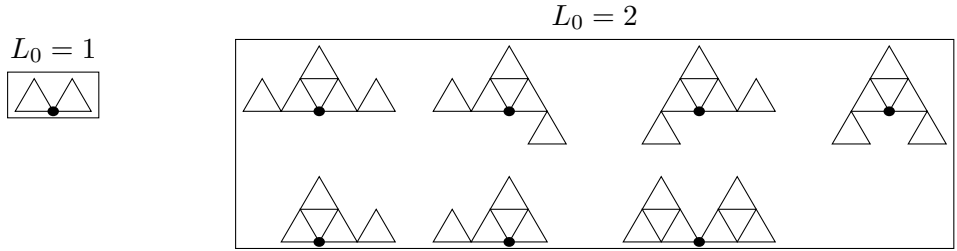
\begin{figure}[h]
\centering
\begin{tikzpicture}[scale=.5, yscale=.866]
    \def \r {0.15}
    \def \lgray {black!20!white}
    
    \draw (-1.2,-.2) rectangle (1.2,1.2);
    \draw (0,1.2) node[above] {$L_0 = 1$};
    \fill (0,0) circle (\r);
    \draw (0,0) -- (1,0) -- (.5,1) -- cycle;
    \begin{scope}[shift={(-1,0)}]
      \draw (0,0) -- (1,0) -- (.5,1) -- cycle;
    \end{scope} 
    
    \begin{scope}[shift={(6,0)}]
    
    \draw (-1.2,-4.2) rectangle (17.7,2.2);
    \draw (8.25,2.2) node[above] {$L_0 = 2$};
    
    \fill (1,0) circle (\r);
    \begin{scope}[scale = 2]
      \draw (0,0) -- (1,0) -- (.5,1) -- cycle;
    \end{scope} 
    \begin{scope}[shift={(.5,1)}]
      \draw (0,0) -- (1,0) -- (.5,-1) -- cycle;
    \end{scope}
    \begin{scope}[shift={(-1,0)}]
      \draw (0,0) -- (1,0) -- (.5,1) -- cycle;
    \end{scope}  
    \begin{scope}[shift={(2,0)}]
      \draw (0,0) -- (1,0) -- (.5,1) -- cycle;
    \end{scope}   
    
    \begin{scope}[shift={(5,0)}]
    \fill (1,0) circle (\r);
    \begin{scope}[scale = 2]
      \draw (0,0) -- (1,0) -- (.5,1) -- cycle;
    \end{scope} 
    \begin{scope}[shift={(.5,1)}]
      \draw (0,0) -- (1,0) -- (.5,-1) -- cycle;
    \end{scope}
    \begin{scope}[shift={(-1,0)}]
      \draw (0,0) -- (1,0) -- (.5,1) -- cycle;
    \end{scope}  
    \begin{scope}[shift={(1.5,-1)}]
      \draw (0,0) -- (1,0) -- (.5,1) -- cycle;
    \end{scope} 
    \end{scope}
    
    \begin{scope}[shift={(10,0)}]
    \fill (1,0) circle (\r);
    \begin{scope}[scale = 2]
      \draw (0,0) -- (1,0) -- (.5,1) -- cycle;
    \end{scope} 
    \begin{scope}[shift={(.5,1)}]
      \draw (0,0) -- (1,0) -- (.5,-1) -- cycle;
    \end{scope}
    \begin{scope}[shift={(2,0)}]
      \draw (0,0) -- (1,0) -- (.5,1) -- cycle;
    \end{scope}  
    \begin{scope}[shift={(-.5,-1)}]
      \draw (0,0) -- (1,0) -- (.5,1) -- cycle;
    \end{scope} 
    \end{scope}
    
    \begin{scope}[shift={(15,0)}]
    \fill (1,0) circle (\r);
    \begin{scope}[scale = 2]
      \draw (0,0) -- (1,0) -- (.5,1) -- cycle;
    \end{scope} 
    \begin{scope}[shift={(.5,1)}]
      \draw (0,0) -- (1,0) -- (.5,-1) -- cycle;
    \end{scope}
    \begin{scope}[shift={(-.5,-1)}]
      \draw (0,0) -- (1,0) -- (.5,1) -- cycle;
    \end{scope}  
    \begin{scope}[shift={(1.5,-1)}]
      \draw (0,0) -- (1,0) -- (.5,1) -- cycle;
    \end{scope} 
    \end{scope}
    
    \begin{scope}[shift={(0,-4)}]
    \fill (1,0) circle (\r);
    \begin{scope}[scale = 2]
      \draw (0,0) -- (1,0) -- (.5,1) -- cycle;
    \end{scope} 
    \begin{scope}[shift={(.5,1)}]
      \draw (0,0) -- (1,0) -- (.5,-1) -- cycle;
    \end{scope} 
    \begin{scope}[shift={(2,0)}]
      \draw (0,0) -- (1,0) -- (.5,1) -- cycle;
    \end{scope} 
    \end{scope}
    
    \begin{scope}[shift={(5,-4)}]
    \fill (1,0) circle (\r);
    \begin{scope}[scale = 2]
      \draw (0,0) -- (1,0) -- (.5,1) -- cycle;
    \end{scope} 
    \begin{scope}[shift={(.5,1)}]
      \draw (0,0) -- (1,0) -- (.5,-1) -- cycle;
    \end{scope}
    \begin{scope}[shift={(-1,0)}]
      \draw (0,0) -- (1,0) -- (.5,1) -- cycle;
    \end{scope}  
    \end{scope}
    
    \begin{scope}[shift={(11,-4)}]
    \fill (0,0) circle (\r);
    \begin{scope}[scale = 2]
      \draw (0,0) -- (1,0) -- (.5,1) -- cycle;
    \end{scope} 
    \begin{scope}[shift={(.5,1)}]
      \draw (0,0) -- (1,0) -- (.5,-1) -- cycle;
    \end{scope}
    \begin{scope}[shift={(-2,0)}]
    \begin{scope}[scale = 2]
      \draw (0,0) -- (1,0) -- (.5,1) -- cycle;
    \end{scope} 
    \begin{scope}[shift={(.5,1)}]
      \draw (0,0) -- (1,0) -- (.5,-1) -- cycle;
    \end{scope}  
    \end{scope} 
    \end{scope}
    
    \end{scope}              
\end{tikzpicture}
\caption{All shapes $\Lambda \in \cS$ for $L_0 = 1$ and $L_0 = 2$. Instead of the sites, the edges (all of unit length) connecting them are shown. The dot indicates the origin in $\R^2$. Each illustrated shape appears 3 times in $\cS$ under the rotations by $0$, $120$, and $240$ degrees.  }
\label{fig:S}
\end{figure}

Next, we introduce the weighted average $\Phi(\rho)$ of the birth-death rate under the assumption of independent site occupancies. It will appear in the hydrodynamic limit. Let $\nu_\rho$ be the product Bernoulli measure on $\{0,1\}^V$ with mean $\rho \in [0,1]$ for a given finite index set $V$. For instance, if $V = V_N$, then $\nu_\rho$ is a measure on $\Omega_N$. Since $\nu_\rho$ is a product measure, the dependence on $V$ is of little importance; we only mention $V$ in those cases where it might not be clear from the context. For a probability measure $\nu$ on $\Omega_N$ and a function $f$ on $\Omega_N$, we denote the expectation of $f$ with respect to $\nu$ by $E_\nu[f]$. In addition, for $\Lambda \in \cS$, let 
\[
  r_\Lambda^N := \frac{|\{ x \in V_N^0 : 2^N(\Lambda_x - x) = \Lambda \}|}{|V_N^0|}
\]
be the ratio of sites $x$ whose neighborhood $\Lambda_x$ corresponds to $\Lambda$. Finally, 
\begin{equation} \label{Phi}
  \Phi(\rho)
  := \sum_{\Lambda \in \cS} r_\Lambda E_{\nu_\rho}[ (1 - 2 \xi_0) c(\xi; \Lambda) ], \qquad 
  r_\Lambda := \lim_{N \to \infty} r_\Lambda^N \in [0,1],
\end{equation}
where $\xi \in \{0,1\}^\Lambda$ (and thus $\nu_\rho$ is a probability measure on $\{0,1\}^\Lambda$). Clearly, $\sum_{\Lambda \in \cS} r_\Lambda^N = 1$ for all $N \geq 1$, and thus $\sum_{\Lambda \in \cS} r_\Lambda = 1$. Hence, $\Phi(\rho)$ is a weighted average of expectations over $\Lambda \in \cS$. Since $\{0,1\}^\Lambda$ is a finite set and since $\rho \mapsto \nu_\rho(\xi)$ is smooth on $[0,1]$ for each $\xi \in \{0,1\}^\Lambda$, we have that $\Phi \in C^\infty([0,1])$.

\begin{ex}
A simple example of $c_x(\eta)$ which satisfies Assumption \ref{a:cx}, is that in \cite[(2.21a)]{DFL}, which is defined on the discrete torus instead of $V_N$. It fits our setting with $L_0 = 1$. Then, $\cS$ only contains 3 rotated copies $\Lambda$ of the same shape; see Figure \ref{fig:S}. We take $c(\xi) := c(\xi; \Lambda)$ independent of $\Lambda$, and let $c(\xi)$ only depend on $\xi_0 := \xi(0)$  and on $\xi_1$ and $\xi_2$ given by the occupancies at the two sites in $\Lambda$ on opposite sides of the origin. We characterize \comm{[p.105 own notes for DFL86 (2.21a)]}
\begin{equation*}
  c(\xi) =
  \tilde c(\xi_0, \xi_1, \xi_2)
  := \left\{ \begin{aligned}
    &1 - 2\gamma + \gamma^2
    &&\text{if } \xi_0 = \xi_1 = \xi_2  \\
    &1 + 2\gamma + \gamma^2
    &&\text{if } \xi_0 \neq \xi_1 = \xi_2  \\
    &1 - \gamma^2
    &&\text{if } \xi_1 \neq \xi_2,
  \end{aligned} \right.
\end{equation*}
where $\gamma \in [0,1)$ is a parameter. The order of $\xi_1, \xi_2$ does not matter since $\tilde c(\xi_0, \xi_1, \xi_2) = \tilde c(\xi_0, \xi_2, \xi_1)$. From the range of $\gamma$ it follows that $c > 0$. Then, defining $c_x(\eta)$ from \eqref{cx} makes $c_x(\eta)$ satisfy Assumption \ref{a:cx} automatically.
A simple computation (see \cite[(2.22)]{DFL}) shows that
\comm{[details: Since the input of $c$ can be interpreted as a binary number between 0 and 7, we will alternatively write its values as $c_{ijk}$ with $i,j,k \in \{0,1\}$. $\bar \rho := 1-\rho$ 
\begin{align*} 
  \Phi(\rho)
   &= E_{\nu_\rho}[ (1 - 2 \xi_x) c_x ( \xi ) ] \\
   &= \bar \rho^3 c_{000} 
      + \bar \rho^2 \rho (c_{001} + c_{010} - c_{100})
      + \bar \rho \rho^2 (c_{011} - c_{101} - c_{110})
      - \rho^3 c_{111} \\ 
   &= \gamma^2 (\bar \rho^3 - 3 \bar \rho^2 \rho + 3 \bar \rho \rho^2 - \rho^3) 
      + 2\gamma (-\bar \rho^3 - \bar \rho^2 \rho + \bar \rho \rho^2 + \rho^3) 
      + (\bar \rho^3 + \bar \rho^2 \rho - \bar \rho \rho^2 - \rho^3) \\
   &= \gamma^2 (\bar \rho - \rho)^3 + (2\gamma - 1)(\bar \rho + \rho)( \rho^2 - \bar \rho^2 ) \\ 
   &= - \gamma^2 (2\rho - 1)^3 + (2\gamma - 1)(2\rho - 1) \\
   &= - \gamma^2 u \Big( u^2 - \frac{2 \gamma - 1}{\gamma^2} \Big) \Big|_{u = 2\rho - 1}
\end{align*}
}
\begin{equation*}
  \Phi(\rho)
  = E_{\nu_\rho}[ (1 - 2 \xi_0) c ( \xi ) ]
  = - \gamma^2 (2\rho-1) \Big( (2\rho-1)^2 - \frac{2 \gamma - 1}{\gamma^2} \Big),
\end{equation*}
which is a third-order polynomial of $\rho$. It is decreasing if $\gamma \leq \frac12$ and has 3 roots if $\gamma > \frac12$.
\end{ex}

A more natural example is $c_x(\eta) = \exp(-\beta \sigma(x) \sum_{y \sim x} \sigma(y))$ with $\beta > 0$ and $\sigma := 2 \eta - 1 \in \{-1,1\}$. It also satisfies Assumption \ref{a:cx} with $L_0 = 1$. It fits to the Ising model, where $\beta$ is the inverse temperature.  Since the number of sites connected to any $x \in V_N^0$ is $4$, $c_x(\eta)$ is the same as for the Ising model on the lattice $\Z^2$. The corresponding function $\Phi(\rho)$ can be computed explicitly; it is a 5th order polynomial of $\rho$. 

\subsection{Discrete and Continuum Laplacians on the Sierpi\'nski gasket}
\label{s:HDL:Eqn:calc}

This section is mainly a rephrased and reduced version of \cite[Section 3.1]{CG}, which is in turn a collection of results from the textbooks \cite{Kig, Str}. We add to it a few basic properties of harmonic functions from the same textbooks. 

Let $N \geq 0$, $f,g : K \to \R$, and recall the notation from Section \ref{s:SG}. The discrete Laplacian $\Delta_N f : V_N^0 \to \R$ and outward normal derivative $\partial_N^\perp f : V_0 \to \R$ on $\cG_N$ are given by
\begin{align*} 
    \Delta_N f (x) := 5^N \sum_{\substack{ y \in V_N \\  y \sim x }} [f(y) - f(x)], \qquad 
    \partial_N^\perp f (a) := \frac{5^N}{3^N} \sum_{\substack{ y \in V_N \\  y \sim a }} [f(a) - f(y)],
\end{align*}
where we recall $V_N^0 = V_N \setminus V_0$, that $5^N$ is the diffusive time scale and that $3^N$ is the scaling of $|V_N|$ and $|E_N|$ (recall \eqref{VN:EN:abs}). The Dirichlet energy is given by
\begin{equation} \label{En:uu}
  \cE_N(f) := \frac12 \frac{5^N}{3^N} \sum_{x  \in V_N} \sum_{\substack{ y \in V_N \\  y \sim x }} [f(y) - f(x)]^2.
\end{equation}
It induces the symmetric quadratic form
\begin{equation*}
  \cE_N(f,g) := \frac14 \big( \cE_N(f+g) - \cE_N(f-g) \big),
\end{equation*}
which is an inner product on the function space $\{ f : V_N \to \R \mid f|_{V_0} = 0 \}$. The connection with $\Delta_N$ and $\partial_N^\perp$ is given by the following integration by parts formula:
\begin{equation} \label{cEN:IBP}
  \cE_N(f,g) = -\frac32 \frac1{|V_N|} \sum_{x  \in V_N^0} \Delta_N f (x) \, g(x) + \sum_{a  \in V_0} \partial_N^\perp f (a) \, g(a).
\end{equation}
Note that all expressions above also apply to functions $f,g$ that are only defined on $V_N$. 

Let $C(K)$ be the space of continuous functions on $K$. We call $f \in C(K)$ $N$-harmonic if $\Delta_M f(x) = 0$ for all $M \geq N+1$ and all $x \in V_M \setminus V_{M-1}$. A $0$-harmonic function is simply called harmonic. 
If $f$ is $N$-harmonic, then $\cE_M(f) = \cE_N(f)$ for all $M \geq N$, and on any $N$-cell $K_w$ (say with boundary points $x \sim y \sim z \sim x$ on $V_N$) the min and max of $f |_{K_w}$ are attained at $x$, $y$ or $z$. Also, for each $f_N : V_N \to \R$ there exists a unique $N$-harmonic function $f$ with $f |_{V_N} = f_N$, which is called the $N$-harmonic extension of $f_N$.

The continuous analogues of $\cE_N, \Delta_N, \partial_N^\perp$ are obtained by passing to the limit $N \to \infty$. The key to passing to this limit is the property that $\cE_N(f)$ is increasing as a function of $N$. Then,
\begin{equation*}
  \cE(f) := \lim_{N \to \infty} \cE_N(f)
\end{equation*}
exists in $[0,\infty]$. We set
\begin{equation*}
  \cF := \{ f : K \to \R \mid \cE(f) < \infty \}
\end{equation*}
to be the domain of $\cE$, and take, in analogy to $\cE_N$,
\begin{equation*}
  \cE(f,g) := \frac14 \big( \cE(f+g) - \cE(f-g) \big)
  \qquad \text{for } f,g \in \cF.
\end{equation*}
Note that for each $N \geq 0$, all $N$-harmonic functions are in $\cF$.
It is known that $\cF \subset C(K)$. In fact, $\cF$ is a Hilbert space. To introduce it, we first recall the Hilbert space $L^2(K)$ induced by the inner product
\begin{equation*}
  \lrang{f,g}_{L^2(K)} := \int_K fg \, dm.
\end{equation*}
Then, 
\begin{equation} \label{cE1}
  \cE_1(f,g) := \cE(f,g) + \lrang{f,g}_{L^2(K)}
\end{equation}
defines an inner product on $\cF$. Finally, for functions $F,G : [0,T] \times K \to \R$, we let $L^2(\cF) := L^2(0,T; \cF)$ be the Hilbert space generated by the inner product
\begin{equation*}
  \lrang{F,G}_{L^2(\cF)} := \int_0^T \cE_1(F_t, G_t) \, dt.
\end{equation*}
This will be the function space on which we define solutions to the reaction--diffusion equation in the hydrodynamic limit.

Next, we define the Laplacian $\Delta$ on $K$. It is motivated by \eqref{cEN:IBP} applied with $g|_{V_0} = 0$. The domain of $\Delta$, denoted by $\cD(\Delta)$, is the space of functions $u \in \cF$ for which there exists $f \in C(K)$ such that
\begin{equation*}
  \cE(u,\varphi) = \int_K f \varphi \, dm
  \qquad \text{for all } \varphi \in \cF_0 := \{g \in \cF : g|_{V_0} = 0 \}. 
\end{equation*}
For each $u \in \cD(\Delta)$, the corresponding $f \in C(K)$ is unique; we denote it by $-\Delta u$. This defines the Laplacian $\Delta$ on $K$. We further set 
\begin{equation*}
  \cD_0(\Delta) := \{ u \in \cD(\Delta) \mid u|_{V_0} = 0 \}, \quad
  \Delta_0 := \Delta |_{\cD_0(\Delta)}.
\end{equation*}

Finally, we list three properties related to $\Delta$. For any $u \in \cD(\Delta)$,
\begin{align} \label{DelN:to:Del}
  \frac32 \Delta_N u &\to \Delta u && \text{uniformly on } K \setminus V_0, \\\label{delNp:to:delp}
  \partial^\perp u(a) &:= \lim_{N \to \infty} \partial_N^\perp u(a) && \text{exists for each } a \in V_0, \\ \notag
  \cE(u,g) &= - \int_K \Delta u \, g \, dm + \sum_{a  \in V_0} \partial^\perp u(a) \, g(a) && \text{for all } g \in \cF. 
\end{align}

\subsection{The reaction--diffusion equation with Dirichlet boundary condition}
\label{s:HDL:Eqn:HDL}

The reaction--diffusion equation with a Dirichlet boundary condition is given by
\begin{equation} \label{HDL}
  \left\{ \begin{aligned}
    \partial_t \rho(t,x) &= \frac23 \Delta \rho(t,x) + \Phi(\rho(t,x))
    &&t \in (0,T], \ x \in K \setminus V_0  \\
    \rho(t, a) &= \rho_B(a)
    &&t \in [0,T], \ a \in V_0 \\
    \rho(0, x) &= \rho_\circ(x)
    &&x \in K,
  \end{aligned} \right.
\end{equation} 
where $T > 0$ is the given end time, $\rho_B : V_0 \to \R$ is the given boundary datum, $\rho_\circ \in L^2(K)$  is the given initial condition, and $\Phi : \R \to \R$ is a given Lipschitz continuous function that describes the (nonlinear) reaction term. Under these weak assumptions on the data, the unknown function $\rho$ may attain any value in $\R$. We will often write $\rho_t(x) = \rho(t,x)$. 
We will consider the same equation with Robin or Neumann boundary conditions in Section \ref{s:b}.

We will only focus on weak solutions of \eqref{HDL}, and refer to them simply as solutions in most of the sequel.

\begin{defn}[Weak solution] \label{d:wSol}
A measurable function $\rho:[0,T]\times K\to\R$ is a weak solution of \eqref{HDL} if
\begin{enumerate}
  \item $\rho \in L^2(\cF)$,
  \item $\rho_t|_{V_0} = \rho_B$ for a.e.\ $t \in (0,T)$, and
  \item For all $t \in (0,T)$ and all test functions $F \in \cD_T := C^1((0,T); \cD_0(\Delta)) \cap C([0,T]; \cD_0(\Delta))$ 
  \begin{multline} \label{weak-form}
    0
    = \Theta_t(\rho, F) 
    := \int_K \rho_t F_t \, dm 
      - \int_K \rho_\circ F_0 \, dm
      - \int_0^t \int_K \rho_s \Big( \frac23 \Delta + \partial_s \Big) F_s \, dm ds 
      \\
      - \int_0^t \int_K \Phi(\rho_s) F_s \, dm ds
      + \frac23 \int_0^t \sum_{a \in V_0} \rho_B(a) \partial^\perp F_s(a)  \, ds.
  \end{multline}
\end{enumerate}
\end{defn}

The existence of solutions will be a by-product of the hydrodynamic limit; see Theorem \ref{t}. We prove it for a restricted choice of the data $\rho_B, \rho_\circ, \Phi$. The following uniqueness result holds for the general class of data as introduced above. Its proof will be given in Section \ref{s:uniqueness}.

\begin{thm} \label{t:HDL:un}
Weak solutions of \eqref{HDL} are unique for any $T, \rho_B, \rho_\circ, \Phi$ in the setting above. 
\end{thm}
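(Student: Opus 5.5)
The plan is an energy (Gronwall) estimate on the difference of two weak solutions. The difference is tested against itself by duality through the Dirichlet Laplacian $\Delta_0$, in the spirit of an $H^{-1}$ argument. Let $\rho^1,\rho^2$ be two weak solutions with the same data and set $w := \rho^1-\rho^2$. Then $w\in L^2(\cF)$ and $w_t|_{V_0}=0$ for a.e.\ $t$, so $w_t\in\cF_0$.

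Subtracting the two weak formulations \eqref{weak-form}, the terms with $\rho_\circ$ and $\rho_B$ cancel. For every $F\in\cD_T$ and every $t$ we are left with
\begin{equation*}
\int_K w_tF_t\,dm=\int_0^t\int_K w_s\Big(\frac23\Delta+\partial_s\Big)F_s\,dm\,ds+\int_0^t\int_K\big(\Phi(\rho^1_s)-\Phi(\rho^2_s)\big)F_s\,dm\,ds .
\end{equation*}

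The natural choice is to expand in the eigenbasis of $-\Delta_0$. It is a standard fact from \cite{Kig,Str} that $-\Delta_0$ has compact resolvent on $L^2(K)$, with an orthonormal eigenbasis $\{\varphi_k\}\subset\cD_0(\Delta)$ and eigenvalues $0<\lambda_1\le\lambda_2\le\cdots\to\infty$. Testing with $F_s=\varphi_k$, which is constant in time and hence in $\cD_T$, the coefficients $w^k_t:=\lrang{w_t,\varphi_k}_{L^2(K)}$ satisfy
\begin{equation*}
w^k_t=-\tfrac23\lambda_k\int_0^t w^k_s\,ds+\int_0^t g^k_s\,ds,\qquad g^k_s:=\lrang{\Phi(\rho^1_s)-\Phi(\rho^2_s),\varphi_k}_{L^2(K)} .
\end{equation*}
So each $w^k$ is absolutely continuous with $w^k_0=0$, and $\frac{d}{dt}(w^k_t)^2=-\frac43\lambda_k(w^k_t)^2+2w^k_tg^k_t$.

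Summing over $k$ first on a finite truncation and then letting the truncation go to infinity by monotone convergence, then dropping the nonpositive diffusion term, gives
\begin{equation*}
\|w_t\|_{L^2(K)}^2\le 2\int_0^t\lrang{w_s,\Phi(\rho^1_s)-\Phi(\rho^2_s)}_{L^2(K)}ds\le 2\,\mathrm{Lip}(\Phi)\int_0^t\|w_s\|_{L^2(K)}^2\,ds .
\end{equation*}
Here we use that $\Phi$ is Lipschitz, so $|\Phi(\rho^1)-\Phi(\rho^2)|\le \mathrm{Lip}(\Phi)|w|$. The integrability needed to justify the limit comes from $\rho^i\in L^2(0,T;L^2(K))$. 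Gronwall's inequality then yields $w\equiv0$ for a.e.\ $t$, and hence for every $t$ in the sense of the weak formulation.

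The main obstacle is the spectral input together with the bookkeeping. One must make sure that eigenfunctions of $\Delta_0$ are legitimate test functions, i.e.\ that they lie in $\cD_0(\Delta)$ with $\Delta\varphi_k=-\lambda_k\varphi_k$ continuous. One must also check that $\{\varphi_k\}$ is complete in $L^2(K)$, which is needed so that $\sum_k(w^k_t)^2=\|w_t\|^2_{L^2(K)}$. Completeness follows because the resolvent of $\Delta_0$ is compact and self-adjoint, $\cF$ being compactly embedded in $C(K)$. A subtle point is that identity \eqref{weak-form} is only asserted for $t\in(0,T)$ and for a.e.\ defined $\rho_t$. I would therefore work with the a.e.\ version of the coefficients and use the integral identity to pick continuous representatives. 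If one prefers to avoid the eigenbasis, an alternative is to test with $F_s=(-\Delta_0)^{-1}$ applied to a time-mollification of $w$. This gives an $H^{-1}$ Gronwall estimate, where the reaction term is bounded via $\lrang{w,\Phi(\rho^1)-\Phi(\rho^2)}$ and the $\cE$-coercivity of $w$. The spectral route is cleaner, so I would try it first.
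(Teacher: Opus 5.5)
Your argument is correct, but it takes a different route from the paper. The paper, following \cite[Section 7.2]{CG}, does not use an eigenbasis. For the difference $\rho=\rho^0-\rho^1$ it uses the time-integrated test function $\int_s^t\rho_r\,dr$, realized as the limit of $F_n(s)=\int_s^t\rho^n_r\,dr$ with $\rho^n\in\cD_t$ converging to $\rho$ in $L^2(0,t;\cF)$. It first rewrites $-\int_K\rho_s\Delta F_s\,dm=\cE(\rho_s,F_s)$, which uses $\rho_s\in\cF$ and $\rho_s|_{V_0}=0$. This gives $R(t)+\frac13\cE\big(\int_0^t\rho_s\,ds\big)=\int_0^t\int_K(\Phi(\rho^0_s)-\Phi(\rho^1_s))\int_s^t\rho_r\,dr\,dm\,ds\le tL_\Phi R(t)$, where $R(t)=\int_0^t\|\rho_s\|_{L^2(K)}^2\,ds$. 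This inequality only closes for $t<1/L_\Phi$, so the paper iterates over $\lceil L_\Phi T\rceil$ time intervals.

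Your spectral route trades this for standard spectral theory of $-\Delta_0$: discrete spectrum, $\varphi_k\in\cD_0(\Delta)$, and completeness in $L^2(K,m)$. In return it needs weaker hypotheses. Because you discard the diffusion term, you never use $\cE(w_s)=\sum_k\lambda_k(w^k_s)^2$. So neither item 2 of Definition \ref{d:wSol} nor $w\in L^2(\cF)$ is actually needed; your remark that $w_t\in\cF_0$ is superfluous. Only $w\in L^2(0,T;L^2(K))$ and the integral identity are used, which gives uniqueness among very weak $L^2$ solutions. Gronwall also closes on all of $(0,T)$ at once, with no iteration, and yields an $L^2$ stability bound if the initial data differ. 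The paper's route, by contrast, needs no spectral input, only the Dirichlet-form structure and density of test functions. It also carries over to the Robin case (Theorem \ref{t:HDL:R:un}) by modifying $\cE_1$.

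One simplification on your side: the limit on the right-hand side needs no convergence theorem. Cauchy--Schwarz and Bessel give $\sum_{k\le n}w^k_sg^k_s\le\|w_s\|_{L^2(K)}\|\Phi(\rho^1_s)-\Phi(\rho^2_s)\|_{L^2(K)}\le L_\Phi\|w_s\|_{L^2(K)}^2$ for every truncation. So you only need monotone convergence (Parseval) on the left.
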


\subsection{The hydrodynamic limit for $b < \frac53$}

The value $b = \frac53$ in the particle system is critical for how the dynamics at the boundary $V_0$ manifest themselves in the hydrodynamic limit as the boundary condition for the reaction--diffusion equation. This is elucidated in \cite{CG} in the absence of the Glauber part. Roughly speaking, $b < \frac53$, $b = \frac53$, and $b > \frac53$ result in a Dirichlet, Robin and Neumann condition, respectively. We focus on the case $b < \frac53$ in the bulk of this paper, and comment on the cases $b = \frac53$ and $b > \frac53$ afterwards in Section \ref{s:b}.

To state our main result (Theorem \ref{t}) for $b < \frac53$, we introduce some notation on the Glauber-Kawasaki dynamics $\eta_\bullet^N := (t \mapsto \eta_t^N)$ for $t \in [0,T]$ with $T > 0$ fixed as defined in Section \ref{s:GK}. We consider large $N \in \N$ as a variable. Let $\mu_N$ be the initial distribution of the process (on $\Omega_N$) and $\P_{\mu_N}$ be the distribution on the Skorokhod space $D([0, T ], \Omega_N )$ of the process.  
Let $\E_{\mu_N}$ be the expectation with respect to $\P_{\mu_N}$. 
As usual, we say that $(\mu_N)_{N \geq 1}$ is
associated with $\rho \in L^1(K)$  if for any $f \in C(K)$ and any $\e > 0$
\begin{equation} \label{asso-d:meas-s}
  \lim_{N \to \infty} \mu_N \Big( \eta \in \Omega_N : \Big| \frac1{|V_N|} \sum_{x \in V_N} f(x) \eta(x) - \int_K f \rho \, dm \Big| > \e \Big) = 0,
\end{equation}
where we recall that $m$ is the self-similar probability measure on $K$.

The following theorem is the main result of this paper.

\begin{thm}[Hydrodynamic limit] \label{t}
Let $b < \frac53$, $\lambda_\pm$ and $c_x$ be as in Section \ref{s:GK}. Let $\rho_B := \frac{\lambda_+}{\lambda_+ + \lambda_-} : V_0 \to (0,1)$ and $\Phi : [0,1] \to \R$ be defined from $c_x$ as in \eqref{Phi}. Let $T > 0$ and $\rho_\circ $ be as in Section \ref{t:HDL:un} such that $0 \le \rho_\circ \le 1$. Then, there exists a unique weak solution $\rho_t$ of \eqref{HDL} with data $T, \rho_\circ, \rho_B, \Phi$ such that $\rho_t(x) \in [0,1]$ for all $x \in K$ and a.e.\ $t \in (0,T)$.
Moreover, for any $(\mu_N)_{N \geq 1}$ associated to $\rho_\circ$ (see above), any $t \in [0,T]$, any $\e > 0$ and any $f \in C(K)$
\begin{equation*}
  \lim_{N \to \infty} \P_{\mu_N} \Big( \Big| \frac1{|V_N|} \sum_{x \in V_N} f(x) \eta_t^N(x) - \int_K f \rho_t \, dm \Big| > \e \Big) = 0.
\end{equation*}
\end{thm}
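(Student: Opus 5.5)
The plan follows the classical entropy/martingale approach of \cite{CG}, adapted to the Glauber term; uniqueness is supplied by Theorem \ref{t:HDL:un}. Let $\pi_t^N := |V_N|^{-1}\sum_{x\in V_N}\eta_t^N(x)\delta_x$ be the empirical measure and $Q_N$ its law on $D([0,T],\cM_+(K))$. The proof has four steps, carried out in this order.

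\textbf{Tightness.} For $F\in\cD_T$, Dynkin's formula gives a martingale
\[
M_t^{N,F}=\lrang{\pi_t^N,F_t}-\lrang{\pi_0^N,F_0}-\int_0^t(\partial_s+5^N\cL_N)\lrang{\pi_s^N,F_s}\,ds.
\]
Computing $5^N\cL_N\lrang{\pi^N,F}$ produces three contributions:
\begin{itemize}
\item the Kawasaki part, $\lrang{\pi^N,\Delta_N F}$ plus boundary terms that involve $\partial_N^\perp F$ and $\eta(a)$;
\item the Glauber part, $|V_N|^{-1}\sum_{x\in V_N^0}F(x)(1-2\eta(x))c_x(\eta)$, which is bounded by $\|c\|_\infty\|F\|_\infty$ by \eqref{cinf};
\item the boundary reservoir part, of order $(5/b)^N 3^{-N}$.
\end{itemize}
The quadratic variation is $O(3^{-N}\cdot 5^N/3^N + 3^{-N})\to0$. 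Aldous' criterion then gives tightness exactly as in \cite{CG}, because the Glauber term is uniformly bounded. Limit points are concentrated on absolutely continuous paths $\rho_t\,dm$ with $0\le\rho\le1$, by the exclusion constraint.

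\textbf{Energy estimate and boundary condition.} I will compare the process with a product measure $\nu$ whose profile is slowly varying and matches $\rho_B$ on $V_0$ (for instance, the harmonic extension of $\rho_B$). The relative entropy is $H(\mu_N|\nu)\le C3^N$. The Dirichlet form bound for $\cL^K_N$ has to be corrected by the non-reversible Glauber part and the boundary part; the Glauber part costs only $O(3^N)$ in the entropy production. Following \cite{CG}, this yields $\rho\in L^2(\cF)$. Using $b<\frac53$, the boundary rate $(5/b)^N$ dominates, which forces $\rho_t|_{V_0}=\rho_B$ a.e. In the weak form this also lets us replace $\eta_s(a)$ by $\rho_B(a)$ in the boundary terms, so that the term $\frac23\sum_a\rho_B(a)\partial^\perp F_s(a)$ of \eqref{weak-form} appears via \eqref{delNp:to:delp}. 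The prefactor $\frac23$ comes from $\frac32\Delta_N\to\Delta$ in \eqref{DelN:to:Del} together with $|V_N^0|/|V_N|$ normalizations.

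\textbf{Replacement lemma (the main obstacle).} We must show that
\[
\E_{\mu_N}\Big|\int_0^t\frac1{|V_N|}\sum_{x\in V_N^0}F_s(x)\Big[(1-2\eta_s(x))c_x(\eta_s)-\Psi_{\Lambda_x}\big(\bar\eta_s^{\,\ell}(x)\big)\Big]ds\Big|\to0,
\]
where $\Psi_\Lambda(\rho):=E_{\nu_\rho}[(1-2\xi_0)c(\xi;\Lambda)]$ and $\bar\eta^{\ell}(x)$ is the density on an $(N-\ell)$-cell containing $x$, in the limit $N\to\infty$ followed by $\ell\to\infty$ and then the macroscopic block size. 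I will first regroup the sum by shape $\Lambda\in\cS$. Within an $m$-level cell, sites of each shape occur with frequencies close to $r_\Lambda$, so averaging $\Psi_\Lambda$ over a cell gives $\Phi$, since the frequency of each shape converges by self-similarity.
\begin{itemize}
\item \emph{One-block estimate.} I will use the entropy inequality and Feynman--Kac to reduce the problem to a variational bound. The Dirichlet form restricted to a fixed $k$-cell $K_w$ (a finite graph isomorphic to $\cG_k$) then controls the problem, and the finite-dimensional equivalence of ensembles on canonical measures of that cell does the rest.
\item \emph{Two-blocks estimate.} Densities of distinct $k$-cells inside a common small macroscopic cell $K_v$ must be compared. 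Lacking translations, I will use the moving particle lemma of \cite{Chen2,CG}: a swap between sites in two cells within $K_v$ costs a multiple of the resistance, which is $\lesssim (3/5)^{N-|v|}$ times the Dirichlet form on $K_v$. Summing over the $3^{|v|}$ disjoint cells bounds the cost by the total Dirichlet form at scale $5^N$. That scaling is what makes the estimate close.
\end{itemize}
I expect the delicate point to be this bookkeeping: cells overlap at junction points, and the irregular shapes near $V_0$ also have to be handled, but these form an $O(1)$ set.

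\textbf{Identification and conclusion.} After the replacement lemma, I will replace the cell density $\bar\eta^{\ell}$ by $\lrang{\pi^N,\iota_\e}$, using the continuity of $\rho$ from $\cF\subset C(K)$ and the Lipschitz continuity of $\Phi$. Passing to the limit shows that every limit point satisfies $\Theta_t(\rho,F)=0$, and hence is a weak solution with values in $[0,1]$. By Theorem \ref{t:HDL:un} the limit is unique, so $Q_N$ converges to $\delta_{\rho}$. Since the limit path is continuous, this gives the stated convergence in probability at every fixed $t$.
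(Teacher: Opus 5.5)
Your architecture matches the paper's: Dynkin martingale, tightness, energy estimate and boundary condition taken from \cite{CG}, a replacement lemma via 1-block and 2-blocks estimates with the moving particle lemma, and uniqueness from Theorem \ref{t:HDL:un}. The replacement lemma, however, has a genuine gap. You never fix the product measure against which you run the entropy inequality and the Feynman--Kac bound, and for $b<\frac53$ neither natural choice works.

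\emph{Constant profile $\varrho\equiv\rho$.} The reservoirs are not reversible for $\nu_\rho$. Their part of the Dirichlet form is only bounded below by $-Cb^{-N}$, so after the speed-up the variational formula carries an error of order $\frac{5^N}{|V_N|}b^{-N}\sim(5/(3b))^N\to\infty$.

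\emph{Harmonic extension of $\rho_B$ (your choice for the energy estimate).} The Dirichlet-form bound is fine, but $\nu_\varrho$ is inhomogeneous on every macroscopic cell, and the block estimates as you describe them need homogeneity. The substitute for translation invariance, averaging the density over relabellings of cells, requires $\nu$ to be invariant under those permutations. More critically, the moving particle lemma of \cite{Chen2,CG} is proved for the homogeneous Bernoulli measure $\nu_\rho$, the reversible measure of the exclusion dynamics. On an $M$-cell with $\sim3^{N-M}$ sites the harmonic profile oscillates by an $N$-independent amount. So on the region where the 2-blocks estimate acts, $\nu_\varrho$ cannot be traded for a homogeneous measure at bounded Radon--Nikodym cost.

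The missing idea is the paper's compromise profile, carried out as follows.
\begin{enumerate}
\item Take $\varrho$ to be the $N_B$-harmonic extension of $\rho_B$ on $V_0$ and of a constant $\rho$ on $V_{N_B}^0$. It then matches the reservoirs but equals $\rho$ on $K^I$.
\item Its energy is unbounded in $N_B$, so one only has $\cD_N(\psi^2)\ge\Gamma_N(\psi^2)-C(3/5)^{N-N_B}$. At the usual entropy scale this costs $(5/3)^{N_B}$.
\item Compensate by using the entropy inequality with constant $2^{N_B}|V_N|$. This leaves errors $C2^{-N_B}+C(5/6)^{N_B}$ and the a priori bound $\Gamma_N(f)\le C2^{N_B}(3/5)^N$.
\item Discard the $M$-cells in $K^B$, at cost $C3^{-N_B}$.
\item Project $f$ onto $\Omega_N^I$, where $\nu_\varrho=\nu_\rho$ is homogeneous; convexity of $\Gamma$ preserves the bound.
\item Run the 1-block and 2-blocks estimates there, and send $N_B\to\infty$ last.
\end{enumerate}

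Two smaller corrections. First, the effective resistance between points of an $M$-cell of $\cG_N$ is $\lesssim(5/3)^{N-M}$, not $(3/5)^{N-M}$. It is the product of this resistance with the a priori bound on $\Gamma_N$ that gives the vanishing factor $2^{N_B}(3/5)^M$. Second, the reservoir contribution of order $(5/(3b))^N$ that you list in the martingale diverges for $b<\frac53$. Tightness and the martingale estimates go through only because test functions in $\cD_T$ vanish on $V_0$, which makes that term identically zero.
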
 

\section{Proof of Theorem \ref{t}}\label{s:pf}

Apart from a key lemma, this section contains the proof of Theorem \ref{t}. This section is a minor modification of the proof of the hydrodynamic limit in \cite{CG}, which we recall to be the restriction of Theorem \ref{t} to the case without the Glauber part, i.e.\ $c_x = 0$ and $\Phi = 0$. Here, we recall several parts of that proof to highlight where modifications are needed. The key lemma that we leave out of this section does not appear in \cite{CG}. It is a replacement lemma for the Glauber part, which we treat in detail in Section \ref{s:repl}. 

\subsection{Preparation: lower bounds on Dirichlet forms}
\label{s:pf:Dform}

Let
\begin{equation*}
  \rho_* := \min_{V_0} \rho_B, \qquad \rho^* := \max_{V_0} \rho_B.
\end{equation*}
Since $\rho_B = \frac{\lambda_+}{\lambda_+ + \lambda_-}$ and $\lambda_\pm(a) > 0$, we have $0 < \rho_* \leq \rho^* < 1$. Let $\varrho : K \to \R$ be  such that
\begin{equation} \label{pfum}
  \varrho \in \cF,
  \quad \varrho |_{V_0} = \rho_B,
  \quad \min_K \varrho = \rho_*,
  \quad \max_K \varrho = \rho^*.
\end{equation}
For a finite set $V$ and a probability measure $\nu$ on $\{0,1\}^V$,
we denote the $L^2(\nu)$-inner product by $\langle\cdot,\cdot\rangle_\nu$.
Moreover, we say that for a nonnegative function $f:\{0,1\}^V\to[0,\infty)$, $f$ is a density with respect to $\nu$ if $E_\nu[f]=1$.
Corresponding to $\varrho$, we introduce the `Dirichlet form'
\begin{equation*}
  \cD_N(\psi^2, \varrho) := \lrang{ \psi, - \cL_N  \psi }_{\nu_\varrho}
\end{equation*}
for densities $\psi^2$ with respect to $\nu_\varrho$. It is not an actual Dirichlet form, because it is neither symmetric nor nonnegative. Yet, it is convex with respect to $\psi^2$. 
We also introduce the carr\'e du champ of the Kawasaki part given by
\begin{equation*}
  \Gamma_N (\psi^2, \varrho) := \frac12 E_{\nu_\varrho} \Big[ \sum_{x \in V_N} \sum_{ \substack{ y \in V_N \\ y \sim x } } [\psi(\eta^{xy}) - \psi(\eta)]^2 \Big]
\end{equation*}
for densities $\psi^2$ with respect to $\nu_\varrho$. It is nonnegative and convex with respect to $\psi^2$. 

In \cite[Corollary 5.4]{CG} the following lower bounds for any density $\psi^2$ with respect to $\nu_\varrho$ are established (recall $\cE_N$ from \eqref{En:uu}): 
  \begin{align} \label{pfup}
  \lrang{ \psi, - \cL_N^K  \psi }_{\nu_\varrho} 
  &\geq\Gamma_N (\psi^2, \varrho) - C \Big( \frac35 \Big)^N \cE_N(\varrho), \\\label{pfuo}
  \lrang{ \psi, - b^{-N} \cL_N^B  \psi }_{\nu_\varrho} 
  &= \frac1{2 b^{N}} \int_{\Omega_N} \sum_{a \in V_0} \big( \lambda_-(a)\eta(a) + \lambda_+(a) (1 - \eta(a)) \big) [\psi(\eta^a) - \psi(\eta)]^2 \, d\nu_\varrho,
\end{align}
where $C = C(\rho_B) > 0$ is a constant.
Note that $\lrang{ \psi, - b^{-N} \cL_N^B  \psi }_{\nu_\varrho}$ is clearly nonegative. We need to add to this a new bound on the Dirichlet form of $\cL_N^G$:   

\begin{prop}
\label{p:DNG:LB}
Let $N \geq 1$ and $\varrho$ be as above. Then, there exists a constant $C > 0$ which only depends on $\rho_B$ and $\| c \|_\infty$ (recall \eqref{cinf}) such that for any density $\psi^2$ with respect to $\nu_\varrho$
\begin{equation*}
  \lrang{ \psi, - 5^{-N} \cL_N^G \psi }_{\nu_\varrho}
  \geq -C \Big( \frac35 \Big)^N.
\end{equation*}
\end{prop}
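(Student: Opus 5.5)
We prove the bound site by site. Write
\[
\lrang{ \psi, - \cL_N^G \psi }_{\nu_\varrho} = \sum_{x \in V_N^0} E_{\nu_\varrho}\big[ c_x(\eta)\, \psi(\eta)\,(\psi(\eta) - \psi(\eta^x)) \big].
\]
Since $\nu_\varrho$ is a product Bernoulli measure with site-dependent mean $\varrho(x)$, it is not invariant under the flip $\eta\mapsto\eta^x$. We control it by the change of variables
\[
\frac{d\nu_\varrho(\eta^x)}{d\nu_\varrho(\eta)} = \Big(\frac{1-\varrho(x)}{\varrho(x)}\Big)^{2\eta(x)-1} =: R_x(\eta),
\]
which is bounded above and below uniformly, because $\rho_* \le \varrho \le \rho^*$ with $0<\rho_*\le\rho^*<1$.

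For each $x$ we use $ab \le \frac12(a^2+b^2)$:
\[
\psi(\eta)(\psi(\eta)-\psi(\eta^x)) \ge \tfrac12 \psi(\eta)^2 - \tfrac12 \psi(\eta^x)^2,
\]
since $ab \le \tfrac12(a^2+b^2)$ gives $\psi^2-\psi\psi^x \ge \tfrac12(\psi^2-(\psi^x)^2)$. Hence each site contributes at least
\[
\tfrac12 E_{\nu_\varrho}\big[c_x(\eta)\psi(\eta)^2 - c_x(\eta)\psi(\eta^x)^2\big].
\]
Changing variables $\eta\mapsto\eta^x$ in the second term gives
\[
\tfrac12 E_{\nu_\varrho}\big[\psi^2\,\big(c_x(\eta) - c_x(\eta^x) R_x(\eta)\big)\big] \ge -\tfrac12 \|c\|_\infty \sup_\eta R_x(\eta)\, E_{\nu_\varrho}[\psi^2].
\]
Because $\psi^2$ is a density, $E_{\nu_\varrho}[\psi^2]=1$, so each site contributes at least $-C(\rho_B,\|c\|_\infty)$.

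Summing over the $|V_N^0| \le C 3^N$ sites (by \eqref{VN:EN:abs}) gives
\[
\lrang{ \psi, - 5^{-N}\cL_N^G \psi }_{\nu_\varrho} \ge -C\, 5^{-N} 3^N = -C(3/5)^N,
\]
which is the claim. The constant depends only on $\rho_*,\rho^*$ (through $\sup R_x \le \max\{\rho^*/(1-\rho^*),\,(1-\rho_*)/\rho_*\}$) and on $\|c\|_\infty$ from \eqref{cinf}.

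The only step needing care is the change of measure. The argument does not rely on any symmetry of $c_x$: a crude uniform bound suffices because the prefactor $5^{-N}$ beats the number of sites $3^N$. No cancellation or smallness of $\varrho$-gradients is required, unlike for the Kawasaki bound \eqref{pfup}.
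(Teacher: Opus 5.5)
Your proof is correct and follows essentially the same route as the paper. The paper likewise uses $\psi(\psi-\psi^x)\ge\frac12\bigl(\psi^2-(\psi^x)^2\bigr)$, drops the nonnegative $c_x\psi^2$ term, changes variables $\eta\mapsto\eta^x$ with the ratio $\nu_\varrho(\eta^x)/\nu_\varrho(\eta)$ bounded via $\rho_*\le\varrho\le\rho^*$, and finishes with $|V_N^0|\le C3^N$. Your bound $\max\{\rho^*/(1-\rho^*),(1-\rho_*)/\rho_*\}$ on the ratio is in fact slightly sharper than the paper's $\frac1{\rho_*}\vee\frac1{1-\rho^*}$.
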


\begin{proof}
Writing $
  \psi^x(\eta) := \psi(\eta^x)$ and $
  \nu_\varrho^x(\eta) := \nu_\varrho(\eta^x)$, a simple computation yields
\begin{gather*} 
  \begin{aligned}
    \lrang{ \psi, - \cL_N^G \psi }_{\nu_\varrho}
    &= \int_{\Omega_N} \sum_{x \in V_N^0} c_x \psi (\psi - \psi^x) \,d \nu_\varrho\\
   &\geq \frac12 \int_{\Omega_N} \sum_{x \in V_N^0} c_x (\psi^2 - (\psi^x)^2) \,d \nu_\varrho \\
   &\geq -\frac12 \Big( \max_{\eta \in \Omega_N } \max_{y \in V_N^0} c_y(\eta) \Big) \int_{\Omega_N} \sum_{x \in V_N^0}  (\psi^x)^2 \,d \nu_\varrho \\
   &= -C \sum_{x \in V_N^0} \int_{\Omega_N} \psi^2 \frac{\nu_\varrho^x}{\nu_\varrho} \,d \nu_\varrho.
  \end{aligned}  
\end{gather*}
The result then follows from applying, for any $x \in V_N^0$
\begin{equation*}
  \frac{\nu_\varrho^x}{\nu_\varrho}(\eta)
  = \left\{ \begin{aligned}
    &\frac{1 - \varrho(x)}{\varrho(x)} 
    && \text{if } \eta_x = 1 \\
    &\frac{\varrho(x)}{1 - \varrho(x)} 
    && \text{if } \eta_x = 0
  \end{aligned} \right\}
  \leq \frac1{\min \varrho} \vee \frac1{\min (1 - \varrho)} 
  = \frac1{\rho_*} \vee \frac1{1-\rho^*},
\end{equation*}
which completes the proof.
\end{proof}

\subsection{The choice of the reference density $\varrho$}
\label{s:pf:varrCG}

In the remainder of Section \ref{s:pf}, we take the reference density $\varrho$ to be the harmonic function characterized by $\varrho |_{V_0} = \rho_B$. Recalling Section \ref{s:HDL:Eqn:calc} for the definition and properties of harmonic functions on $K$, we observe that $\varrho$ satisfies the requirements in \eqref{pfum}. 
In addition, $\cE_N(\varrho) = \cE_0(\varrho) = \cE_0(\rho_B)$ for all $N \geq 0$. Using this with the bounds obtained in Section \ref{s:pf:Dform}, we get
\begin{multline} \label{pfun}
  \cD_N(\psi^2, \varrho)
  \geq \Gamma_N (\psi^2, \varrho) - C \Big( \frac35 \Big)^N \\
  + \frac1{2 b^{N}} \int_{\Omega_N} \sum_{a \in V_0} \big( \lambda_-(a)\eta(a) + \lambda_+(a) (1 - \eta(a)) \big) [\psi(\eta^a) - \psi(\eta)]^2 \, d\nu_\varrho
\end{multline}
for any density $\psi^2$ with respect to $\nu_\varrho$.
We note that \eqref{pfun} is, qualitatively in $N$, the same lower bound as in \cite{CG}, i.e.\ the case without the Glauber term.

\subsection{The associated martingale}
\label{s:pf:mg}

This section follows the lines of \cite[Section 3.4]{CG}. The only new parts are the computations involving the Glauber term.

We introduce further notation. To the Markov process $\eta_\bullet^N$ we associate the  random empirical measure on $K$ given by 
\begin{equation*}
  \pi_t^N := \frac1{|V_N|} \sum_{x \in V_N} \eta_t^N (x) \delta_x.
\end{equation*}
In particular, it acts on $f \in C (K)$ as
\begin{equation*}
  \pi_t^N (f) := \frac1{|V_N|} \sum_{x \in V_N} \eta_t^N (x) f(x).
\end{equation*}
Let $\cM_+(K)$ be the space of nonnegative measures on $K$, endowed with the weak topology, with total mass bounded by $1$. Note that $\pi_t^N$ is a $\cM_+(K)$-valued random variable. We also consider the (random) path $\pi_\bullet^N := \{\pi_t^N : t \in [0,T]\}$ in the Skorokhod space $D([0,T]; \cM_+(K))$, and denote by $\Q_N$ the law of $\pi_\bullet^N$ on $D([0,T]; \cM_+(K))$. An alternative description for the display in Theorem \ref{t} is 
\begin{equation} \label{pful}
  \lim_{N \to \infty} \Q_N \Big( \pi_\bullet^N \in D([0,T]; \cM_+(K)) : \Big| \pi_t^N(f) - \int_K f \rho_t \, dm \Big| > \e \Big) = 0.
\end{equation}

Let $N \geq 1$ and $F \in \cD_T$ (recall Definition \ref{d:wSol}). Then, by Dynkin's formula, the following process constructed from $\eta_t^N$ is a martingale on $[0,T]$: \comm{[Dynkin's formula: KL Lem A1.5.1. KL writes $F_t : E \to \R$ instead of our $\pi_t^N(F_t) : \Omega_N \to \R$. KL assumes $F$ to be $C^1$ in $[0,T]$, but the proof allows for $C^1$ only on the interior, and continuity up to $0$ and $T$.] } 
\begin{equation} \label{MtNF}
  M_t^N(F) 
  := \pi_t^N(F_t) - \pi_0^N(F_0) - \int_0^t (\partial_s + 5^N \cL_N) \pi_s^N(F_s) \, ds,
\end{equation}
where in the integrand $\pi_s^N(F_s)$ is to be understood as $\varphi(s, \eta_s^N)$ with $\varphi : [0,T] \times \Omega_N \to \R$. Thus, $\partial_s \pi_s^N(F_s) = \pi_s^N(\partial_s F_s)$ and (by linearity of $\cL_N$)
\begin{align*}
  5^N \cL_N \pi_s^N(F_s)
  = \frac1{|V_N|} \sum_{x \in V_N} F_s(x) 5^N \cL_N (\eta_s^N (x)).
\end{align*}
A long, but elementary, computation reveals that 
\begin{align} \notag
  5^N \cL_N^K \pi_s^N(F_s)
  &= \frac1{|V_N|} \sum_{x \in V_N^0} \eta_s^N (x) \Delta_N F_s(x)
    - \frac{3^N}{|V_N|} \sum_{a \in V_0} \eta_s^N (a) \partial_N^\perp F_s(a), \\  
     \label{pfuq} 
     \cL_N^G \pi_s^N(F_s)
  &= \frac1{|V_N|} \sum_{x \in V_N^0}  F_s(x) c_x(\eta_s^N) (1 - 2 \eta_s^N (x)),
\end{align} 
and $\cL_N^B \pi_s^N(F_s)=0$.
Putting all three together, we obtain
\begin{align} \label{pfzz}
  5^N \cL_N \pi_s^N(F_s)
  &= \frac1{|V_N|} \sum_{x \in V_N^0} \Big( \eta_s^N (x) \Delta_N F_s(x) + c_x(\eta_s^N) (1 - 2 \eta_s^N (x)) F_s(x) \Big) \notag\\
  &\qquad - \frac{3^N}{|V_N|} \sum_{a \in V_0} \eta_s^N (a) \partial_N^\perp F_s(a).
\end{align}
 
For later use, we also consider the quadratic variation \comm{[CG21 computes this only for a t-indep test fct in (3.23). But, the same formula holds for t-dep test fcts. See (3.25) or otherwise KL App 1.5.]}
\begin{equation*}
  \lrang{M^N(F)}_t
  = \int_0^t 5^N [ \cL_N \big( \pi_s^N(F_s)^2 \big) - 2 \pi_s^N(F_s) \cL_N \pi_s^N(F_s) ] \, ds. 
\end{equation*}
Since this is linear in $\cL_N$, we refer to the computation of the Kawasaki and boundary part to \cite{CG}, and compute for the Glauber part (with short-hand notation $\sum_x = \sum_{x \in V_N^0}$, $F_x = F_s(x)$, $\eta_x = \eta_s^N(x)$, $\hat \eta_x := 1 - 2\eta_x$, $c_x = c_x(\eta)$)
\begin{align*}
  2 \cdot 5^N \pi_s^N(F_s) \cL_N^G \pi_s^N(F_s)
  &= \frac2{|V_N|^2} \Big( \sum_x \eta_x F_x \Big) \Big( \sum_y F_y c_y \hat \eta_y \Big) \\
  &= \frac2{|V_N|^2} \sum_{x,y} F_x F_y c_y \eta_x \hat \eta_y
\end{align*}
and
\begin{align*}
  5^N \cL_N^G \big( \pi_s^N(F_s)^2 \big)
  &= \frac1{|V_N|^2} \sum_{x,y} F_x F_y \cL_N^G (\eta_x \eta_y) \\
  &= \frac1{|V_N|^2} \Big( \sum_{x,z} \sum_{y \neq x} F_x F_y \big( \delta_{xz} c_z \eta_y \hat \eta_x + \delta_{yz} c_z \eta_x \hat \eta_y \big) + \sum_{x,z} F_x^2 \delta_{xz} c_z \hat \eta_x \big) \Big) \\
  &= \frac2{|V_N|^2} \sum_{x} \sum_{y \neq x} F_x F_y c_y \eta_x \hat \eta_y 
     + \frac2{|V_N|^2} \sum_x F_x^2 c_x \hat \eta_x \\
  &= \frac2{|V_N|^2} \sum_{x,y} F_x F_y c_y \eta_x \hat \eta_y 
     + \frac2{|V_N|^2} \sum_x F_x^2 c_x (1-\eta_x),   
\end{align*} 
where we recognize the first term from the display above. We obtain
\begin{align} \label{pfuj}
  \lrang{M^N(F)}_t
  &= \int_0^t \frac{5^N}{|V_N|^2} \sum_{x \in V_N} \sum_{ \substack{x \in V_N \\ y \sim x} } \big[ \eta_s^N (x) - \eta_s^N (y) \big]^2 [ F_s(x) - F_s(y) ]^2 ds \notag\\
   &\qquad + \int_0^t \frac2{|V_N|^2} \sum_{x \in V_N^0} c_x(\eta_s^N) (1 - \eta_s^N(x)) F_s(x)^2 \, ds. 
\end{align}
With the bound \cite[(6.5)]{CG} for the Kawasaki part, we get (recall \eqref{cinf} for $c_x$ and \eqref{cE1} for the norm on $\cF$)
\begin{equation} \label{pfui}
  \lrang{M^N(F)}_t
  \leq \frac{ C t}{3^N} \max_{0 \leq s \leq t} \cE_N(F_s) + \frac{C' t}{3^N} \| c \|_\infty  \| F \|_\infty^2
  \leq \frac{C'' t}{3^N}.
\end{equation}

\subsection{Heuristically passing to $N \to \infty$}

This short section follows the lines of \cite[Section 3.4]{CG}. It provides a guideline for the proof of Theorem \ref{t} in the remaining sections, but is not a part of the proof itself.

We formally pass to the limit $N \to \infty$ for the expectation of each of the terms in \eqref{MtNF}. We assume that $\pi_t^N$ converges to a deterministic function $\rho_t : K \to (0,1)$. We expect the martingale term $M_t^N(F)$ to vanish, and terms corresponding to $\pi_s^N(G)$ to converge to $\int_K G_s \rho_s \, dm$. That leaves the $\cL_N$ term, which is expanded in \eqref{pfzz}. In \eqref{pfzz}, the $\Delta_N$-term reads as $\pi_s^N(\Delta_N F_s) + O(1/|V_N|)$, where the error term contains the contribution from the three boundary sites. Since $F_s \in \cD_0(\Lambda)$, we expect from \eqref{DelN:to:Del} that it converges to $\int_K \frac23 \Delta F_s \rho_s \, dm$. From \eqref{Phi} we expect the $c_x$-term to converge to $\int_K F_s \Phi(\rho_s) \, dm$. In the justification of this part, we need our new replacement lemma. Regarding the term involving the sum over $a \in V_0$, we recall from \eqref{VN:EN:abs} that $3^N / |V_N| \to \frac23$ and from \eqref{delNp:to:delp} that $\partial_N^\perp F_s(a) \to \partial^\perp F_s(a)$. Putting together all these expected limits, we observe that $\rho_t$ satisfies the weak form \eqref{weak-form} of the reaction--diffusion equation.

\subsection{Tightness of $\Q_N$} 

The proof for the tightness of $\Q_N$ given in \cite[Section 6.1]{CG} applies directly, given that we have already established the bound \eqref{pfui} on the quadratic variation of the martingale $M_t^N(F)$ and that the summand in \eqref{pfuq} is uniformly bounded by $\|c\|_\infty \|F\|_\infty$.
\comm{[
Here is why: To prove tightness of $\Q_N$, it is, by Aldous' criterion and KL Prop 4.1.7, sufficient to show that for every $f$ in a dense subset of $C(K)$: \comm{[Aldous' criterion: CG21 L6.1]}
\begin{itemize}
  \item[(A1)] for all $t \in [0,T]$
  \begin{equation*}
    \lim_{M \to \infty} \sup_N \Q_N( |\pi_t^N(f)| > M ) = 0,
  \end{equation*}
  \item[(A2)] for all $\e > 0$
  \begin{equation*}
    \lim_{\gamma \to 0} \limsup_{N \to \infty} \sup_{ \tau \in \cT } \sup_{\theta \leq \gamma}
      \Q_N( |\pi_{(\tau + \theta) \wedge T }^N(f) - \pi_\tau^N(f)| > \e ) = 0,
  \end{equation*}
  where $\cT $ is the family of stopping times bounded by $T$ and $d$ is a metric that metrizes the weak topology on $\cM_+(K)$. 
\end{itemize}
Statement (A1) follows directly from $|\pi_t^N(f)| \leq \|f\|_\infty < \infty$. The proof of (A2) applies \eqref{MtNF} to the inside of the absolute value signs, uses the triangle inequality to put absolute value signs around each of the integrals appearing from \eqref{MtNF}, splits $\Q_N(\cdots)$ as a sum over such terms separately for each integral term, and then applies Chebyshev's inequality to bound each such $\Q_N(\cdots)$ by $\E_{\mu_N}[|\cdots|^2]$. To obtain this bound for the Glauber term, it is sufficient to show that the summand in \eqref{pfuq} is uniformly bounded in $N,x,s$. To obtain this bound for the martingale part, \eqref{pfui} is sufficient.
]} 

\subsection{A priori properties of the limit path measure $\Q$}

From the tightness of $\Q_N$ we obtain the existence of a subsequence (not relabeled) and a limit point $\Q$ such that $\Q_N \weakto \Q$. By a standard argument that only relies on $|\eta_s^N(x)|$ being bounded uniformly in $N,s,x$, it follows that $\Q$ is concentrated on paths $\pi_\bullet$ for which $\pi_t$ has for a.e.\ $t$ a density $\rho_t$ with respect to $m$, and that $\rho_0 = \rho_\circ$. Moreover, it follows from  $\eta_s^N(x) \in \{0,1\}$ for all $N,s,x$ and from the properties of $m$ that $\rho_t(x) \in [0,1]$ for a.e.\ $t \in (0,T)$ and a.e.\ $x \in K$.

In \cite[Proposition 6.3]{CG}  it is established that $\rho \in L^2(0,T; \cF)$ $\Q$-a.s.\ when there is no Glauber part. The corresponding proof only requires $\cL_N$ with any added terms, so that the lower bound on the Dirichlet form given by \eqref{pfun} holds. We have already proved this for our setting with the Glauber term. Hence, we have $\rho \in L^2(0,T; \cF)$ $\Q$-a.s.
\comm{[In detail: to get $\rho \in L^2(0,T; \cF)$ $\Q$-a.s., the proof only needs the Laplace term of the weak equation, which comes from the limit of the $\Delta_N$ term in $M_t^N$. Hence, our new Glauber term only appears in the proof in $\E_\Q$, where the path measure $\Q$ is determined from the full process. This surfaces in the proof in L6.6 as the Dirichlet form of $\cL_N$. Now, CG only needs a lower bd on the Dirichlet form.]}

\subsection{The limiting density $\rho$ is a weak solution}
\label{s:pf:wSol}

The key statement that we will prove is that for any $\e > 0$ and any test function $F \in \cD_T$
\begin{equation} \label{pfzw}
   \Q \Big( \sup_{0 \leq t \leq T} |\Theta_t(\rho, F)| > \e \Big) = 0,
\end{equation}
where $\Theta_t(\rho, F)$ is the weak form of the reaction--diffusion equation; see \eqref{weak-form}. In addition, we use Theorem \ref{t:HDL:un} (for the proof, see Section \ref{s:uniqueness}) and show
\begin{equation} \label{pftx}
  \Q(\rho_t|_{V_0} = \rho_B \text{ for a.e.\ } t \in (0,T)) = 1.
\end{equation}
Then, by the uniqueness of weak solutions (see Theorem \ref{t:HDL:un}, which applies since $\Phi$ is smooth on $[0,1]$ (recall the statement below \eqref{Phi}) and $\rho_t(x) \in [0,1]$) and $\Q$ being concentrated on $L^2(0,T; \cF)$, it follows that $\Q = \delta_\rho$, where $\rho$ is the unique weak solution of \eqref{t:HDL:un}. Consequently, the full sequence $\Q_N$ converges to $\Q$, which implies \eqref{pful}. This then completes the proof of Theorem \ref{t}. 

For the proof of \eqref{pftx} we refer to \cite[Lemma 6.8]{CG}. For the proof of this lemma to apply to our setting with the Glauber term, the bound \eqref{pfun} is sufficient.

In the remainder, we focus on proving \eqref{pfzw}, which is done in \cite[Proposition 6.7]{CG} without the Glauber term. We briefly recall the proof to demonstrate that most steps apply to our setting, and to build the setting in which we are going to apply our new replacement lemma for the Glauber term.
\comm{[Their proof does $b > \frac53$, and sketches our case afterwards. 
They reason backwards; from the desired bound on $\Theta_t(\rho, F)$ to the already established bound on $M_t^N(F)$ (KL does it oppositely).]}

The starting point is to show that
\begin{equation} \label{pfur}
   \limsup_{N \to \infty} \Q_N \Big( \sup_{0 \leq t \leq T} |M_t^N(F)| > \e \Big) = 0
\end{equation}
for any $\e > 0$ and any $F \in \cD_T$. By Doob's inequality, the probability in the left-hand side is bounded from above by
$
  \frac4{\e^2} \E_{\mu_N} [\lrang{M^N(F)}_T]
$.
By \eqref{pfui}, this vanishes in the limit $N \to \infty$.  

With \eqref{pfur} established, we work towards \eqref{pfzw}. This would follow from $\Q_N \weakto \Q$ and the Portmanteau Theorem if the arguments of $\sup_t |\cdot|$ in the expressions of $\Q_N$ and $\Q$ were to be equal, and equal to an $N$-independent, bounded function $\Psi$ of $\pi_\bullet^N$ that is continuous in the Skorokhod topology. However, these terms are not equal, and neither $M_t^N(F)$ nor $\Theta_t(\rho, F)$ is continuous. The strategy is to approximate both terms by a single continuous term and show that the error made by this approximation vanishes in the limit. We call such approximation steps `replacements'. By virtue of the triangle inequality, each component of $M_t^N(F)$ and $\Theta_t(\rho, F)$ can be replaced independently.
\comm{[For PvM to recall: \\
Wanted: $\limsup_{N \to \infty} \Q_N( \Phi(\pi_\bullet) > \e ) \geq \Q( \Phi(\pi_\bullet) > \e )$. \\ 
Needs: $\Phi : D \to [0,\infty)$ cts and $\Q_N \weakto \Q$. \\
Pf: $\{ \pi_\bullet : \Phi(\pi_\bullet) > \e\}$ is open; conclude by Portmanteau. \\
\\
Wanted: $\Phi(\pi_\bullet) = \sup_{0 \leq t \leq T} | \pi_0(f) + \pi_t(f) + \int_0^t F(\pi_s) \, ds|$ cts. \\
Needs: $f \in C(K)$ and $F : \cP(K) \to \R$ bdd and cts in weak topology.
\begin{enumerate}
  \item Tool: $\pi_\bullet^N \to \pi_\bullet$ in Skor top implies $\pi_t^N \weakto \pi_t$ in $\cP(K)$ for a.e.\ $t$ and for $t \in \{0,T\}$ (standard prop of Skor space; KL notes, Chap 4, prop (V))
  \item Tool: KL p56 bottom says OK if $\int_0^t F(\pi_s^N) \, ds \to \int_0^t F(\pi_s) \, ds$ for all $t$
  \item $F$ cts means $F(\pi_s^N) \to F(\pi_s)$ a.e.\ $s$
  \item This with $F$ bdd gives with DCT that $\int_0^t F(\pi_s^N) \, ds \to \int_0^t F(\pi_s) \, ds$ for all $t$
\end{enumerate}
]}
 
To introduce our replacement terms, we introduce some notation. Given $1 \leq M \leq N$, let $w$ be a word of length $M$. Recall from Section \ref{s:SG} the corresponding concatenation of similitudes $\varphi_w$ and the $M$-cells $K_w := \varphi_w(K)$. 
We define 
\begin{equation} \label{pfuh}
  V_N^w := V_N \bigcap K_w \setminus V_M = \varphi_w(V_{N-M}^0)
\end{equation}
and the \textit{approximate} average density on it as
\begin{equation} \label{pfua}
  \pi_s^N(\iota_w) = \frac{3^M}{|V_N|} \sum_{x \in V_N^w} \eta_s^N (x),
  \qquad
  \iota_w(z) := \frac1{m(K_w)} \bone_{K_w}(z) = 3^M \bone_{K_w}(z)
\end{equation}
with $z \in K$. The actual average density on $V_N^w$ is
\begin{equation} \label{pfvm}
  \ov \eta_s^N (V_N^w) := \frac1{|V_N^w|} \sum_{x \in V_N^w} \eta_s^N (x).
\end{equation}
It only differs from $\pi_s^N(\iota_w)$ through the different prefactors
\begin{align*}
  \frac1{|V_N^w|} = \frac1{|V_{N- M }^0|} = \frac2{3(3^{N-M} - 1)}, \qquad
  \frac{3^M}{|V_N|} = \frac{2}{3 ( 3^{N-M} - 3^{-M})},
\end{align*}
whose difference vanishes in the limit $N \to \infty$.

Note that $\pi_s^N(\iota_w)$ is not continuous in the Skorokhod topology, because $\iota_w$ is not continuous. We handle this by approximating $\iota_w$ from above by continuous functions $\iota_w^k$ as $k \to \infty$. A natural choice for $\iota_w$ is the $(M+k)$-harmonic extension (recall Section \ref{s:HDL:Eqn:calc}) which equals $1$ on $V_{M+k} \cap K_w$ and $0$ on all other sites of $V_{M+k}$. Then, $\iota_w^k$ is $1$ on $K_w$, and $0$ on $K \setminus K_w$ except for the three (or two, if $K_w \cap V_0 \neq \emptyset$) smaller $(M+k)$-cells adjacent to $K_w$ (this requires $k \leq N-M$), where $\iota_w^k$ continuously connects the `boundary values' $0$ and $1$. It is easy to see that 
\begin{subequations} \label{pfuc}
\begin{equation} \label{pfuca}
  \| \iota_w^k - \iota_w \|_{L^1(K,m)} \to 0
\end{equation}
as $k \to \infty$ with $M$ fixed, and that
\begin{equation} \label{pfucb}
  |\pi_s^N(\iota_w^k) - \pi_s^N(\iota_w)| \leq \| \iota_w^k - \iota_w \|_{L^1(K,m)}
\end{equation}
\end{subequations}
for all $N$ large enough (with respect to $M$ and $k$) and all $s \in [0,T]$.

With $\iota_w$ and $\iota_w^k$ introduced, we turn to the replacement statements that we need. First, $M_t^N(F)$ in \eqref{pfur} reads as (recall \eqref{MtNF} and \eqref{pfzz})
\begin{multline} \label{pfug} 
  M_t^N(F)
  = \pi_t^N(F_t) 
- \pi_0^N(F_0) 
- \int_0^t \pi_s^N \big( (\partial_s + \Delta_N^0) F_s \big) \, ds \\
- \int_0^t \frac1{|V_N|} \sum_{x \in V_N^0} c_x(\eta_s^N) (1 - 2 \eta_s^N (x)) F_s(x) \, ds 
+ \int_0^t \frac{3^N}{|V_N|} \sum_{a \in V_0} \eta_s^N (a) \partial_N^\perp F_s(a) \, ds,
\end{multline}
where 
\begin{equation*}
  \Delta_N^0 f (x) := \left\{ \begin{aligned}
    &\Delta_N f (x) 
    &&\text{if } x \in V_N^0 \\
    &0
    &&\text{if } x \in V_0
  \end{aligned} \right.
\end{equation*}
for any $f : V_N \to \R$.

To demonstrate how the replacement of several terms in \eqref{pfug} works, we focus in detail on the boundary term, and briefly mention the other, similar replacements afterwards. For the boundary term, we replace $3^N |V_N|^{-1}$ by $\frac23$, $\partial_N^\perp F_s(a)$ by $\partial^\perp F_s(a)$ and $\eta_s^N (a)$ by $\pi_s^N(\iota_{w^a}^k)$, where $w^a$ is the word of length $M$ for which $a \in K_{w^a}$. In view of \eqref{pfur}, the corresponding replacement statement is that for all $\e' > 0$
\begin{equation} \label{pfuf}
  \limsup_{M,k,N} \Q_N \bigg( \sup_{0 \leq t \leq T} \bigg| \int_0^t \sum_{a \in V_0} \Big( \frac{3^N}{|V_N|}  \eta_s^N (a) \partial_N^\perp F_s(a) - \frac23 \pi_s^N(\iota_{w^a}^k) \partial^\perp F_s(a) \Big) \, ds \bigg| > \e' \bigg) = 0,
\end{equation}
where here and henceforth we use the short-hand notation
\begin{equation} \label{pfub}
  \limsup_{M,k,N} 
  := \limsup_{M \to \infty}  \limsup_{k \to \infty} \limsup_{N \to \infty},
\end{equation}
which demonstrates the importance of the ordering of $M,k,N$ in this notation. Due to this ordering, we may assume, whenever convenient, that $k$ is large enough with respect to $M$ and that $N$ is large enough with respect to $M$ and $k$. 

The proof of \eqref{pfuf} in \cite{CG} is split by proving the replacements of the three parts mentioned above one by one. The most involved step is the replacement of $\eta_s^N (a)$ by the approximate average $\pi_s^N(\iota_{w^a})$. The proof applies verbatim to our setting. However, since $\Q_N$ depends on $\cL_N$ and thus on the Glauber part, a comment is in order. The proof needs on $\cL_N$ only that the lower bound in \eqref{pfun} (already established) on the Dirichlet form holds. 

The other replacements that we apply to \eqref{pfug} are $\Delta_N^0$ by $\frac23 \Delta$,
\begin{equation} \label{pfue}
  \frac1{|V_N|} \sum_{x \in V_N^0} c_x(\eta_s^N) (1 - 2 \eta_s^N (x)) F_s(x)
  \quad \text{by} \quad
  \frac1{3^M} \sum_{|w| = M} \Phi(\pi_s^N(\iota_w)) \ov F_s(V_N^w),
\end{equation}
where $\Phi$ is defined in \eqref{Phi} and 
\begin{equation} \label{pftz}
  \ov F_s(V_N^w) := \frac1{|V_N^w|} \sum_{x \in V_N^w} F_s (x),
\end{equation}
$\ov F_s(V_N^w)$ by 
$
 \ov F_s^w := m(K_w)^{-1} \int_{K_w} F_s \, dm 
$
and $\pi_s^N(\iota_w)$ by $\pi_s^N(\iota_w^k)$.
The replacement in \eqref{pfue} is the main new part, which we treat in full detail in Section \ref{s:repl}. The replacement of $\ov F_s(V_N^w)$ by $\ov F_s^w$ is not done explicitly in \cite{CG}, but it can be easily proven from the continuity of $F$. Likewise, the replacement of $\pi_s^N(\iota_w)$ by $\pi_s^N(\iota_w^k)$ is easily justified from the Lipschitz continuity of $\Phi$ and \eqref{pfuc}.

Applying all these replacements to \eqref{pfug} yields that $M_t^N(F)$ in \eqref{pfur} can be replaced (possibly by multiplying $\e$ by a universal prefactor) by
\begin{multline*}
  \pi_t^N(F_t) 
- \pi_0^N(F_0) 
- \int_0^t \pi_s^N \Big( \Big( \partial_s + \frac23 \Delta \Big) F_s \Big) \, ds \\
- \frac1{3^M} \sum_{|w| = M} \int_0^t  \Phi(\pi_s^N(\iota_w^k)) \ov F_s^w \, ds 
+ \frac23 \sum_{a \in V_0} \int_0^t \pi_s^N(\iota_{w^a}^k) \partial^\perp F_s(a) \, ds =: \Psi(\pi_\bullet^N),
\end{multline*} 
where $\Psi$ is bounded and continuous in the Skorokhod topology. Hence, we can then pass to the limit $N \to \infty$ in the resulting form of \eqref{pfur} as
\[
\limsup_{M,k} \lim_{N \to \infty} \Q_N \Big( \sup_{0 \leq t \leq T} |\Psi(\pi_\bullet^N)| > C \e \Big) 
\geq \limsup_{M,k} \Q \Big( \sup_{0 \leq t \leq T} |\Psi(\pi_\bullet)| > C \e \Big).
\]
Since $\Q$ is concentrated on paths $\pi$ with a density $\rho \in L^2(0,T; \cF)$, we have \comm{[Cty in the Skor top: By the orange stuff above, it is sufficient, for the G term, to show that $\pi \mapsto \Phi(\pi(\iota_w^k))$ is bdd and cts with respect to the weak top of prob measures. Bdd is trivial. And, if $\pi_\e \weakto \pi$, then $\pi_\e(\iota_w^k) \to \pi(\iota_w^k)$ by defn (cause $\iota_w^k$ is a continuous fct), and the cty of $\Phi$ does the rest.]} 
\begin{multline} \label{pfud} 
    \Psi(\pi_\bullet) = \int_K \rho_t F_t \, dm 
      - \int_K \rho_0 F_0 \, dm
      - \int_0^t \int_K \rho_s \Big( \partial_s + \frac23 \Delta \Big) F_s \, dm ds 
      \\
      - \frac1{3^M} \sum_{|w| = M} \int_0^t  \Phi(\ov \rho_s^{w,k}) \ov F_s^w \, ds  
      + \frac23 \sum_{a \in V_0} \int_0^t \ov \rho_s^{w^a,k} \partial^\perp F_s(a)  \, ds,
\end{multline}
where $\ov \rho_s^{w,k} := \int_K \rho_s \iota_w^k \, dm$. 
\comm{[reminder: the goal is to replace \eqref{pfud} by 
\begin{multline*} 
    \Theta_t(\rho, F) = \int_K \rho_t F_t \, dm 
      - \int_K \rho_\circ F_0 \, dm
      - \int_0^t \int_K \rho_s \Big( \frac23 \Delta + \partial_s \Big) F_s \, dm ds 
      \\
      - \int_0^t \int_K \Phi(\rho_s) F_s \, dm ds
      + \frac23 \int_0^t \sum_{a \in V_0} \rho_B(a) \partial^\perp F_s(a)  \, ds.]
\end{multline*}
}

We replace \eqref{pfud} under $\Q$ by $\Theta_t(\rho, F)$ in four replacement steps: $\rho_0$ by $\rho_\circ$, $\ov \rho_s^{w^a}$ by $\rho_B(a)$, and two other replacements for the Glauber term. Since $\Theta_t(\rho, F)$ is independent of $M$ and $k$, this will complete the proof of \eqref{pfzw}. The first two replacements are done in \cite{CG}; the proof is rather simple, does not rely on the details of $\Q$, and relies on \eqref{pfuc} (with $\pi_s$ instead of $\pi_s^N$) and the continuity of $\rho_s$ on $K$ for a.e.\ $s$ for the replacement by $\rho_B(a)$. 
For the Glauber term, we first replace $\Phi(\ov \rho_s^{w,k})$ by $m(K_w)^{-1} \int_{K_w} \Phi(\rho_s(z)) \, dm(z)$. To see that this is possible, imagine \eqref{pfuf} for these two terms with $\limsup_{M,k} \Q$. We estimate \comm{[For the two CG replacements, the first is easy. The second is stated for $b \geq \frac53$ in (6.54), dealt with in (6.55) and (6.46)]}
\begin{align*}
  \bigg| \Phi(\ov \rho_s^{w,k}) - \frac1{m(K_w)} \int_{K_w} \Phi(\rho_s(z)) \, dm(z) \bigg|
  &\leq \frac1{m(K_w)} \int_{K_w} | \Phi(\ov \rho_s^{w,k}) -  \Phi(\rho_s) | \, dm \\
  &\leq \frac{L_\Phi}{m(K_w)} \int_{K_w} | \ov \rho_s^{w,k} - \rho_s | \, dm \\
  &\leq L_\Phi | \ov \rho_s^{w,k} - \ov \rho_s^w | 
  + \frac{L_\Phi}{m(K_w)} \int_{K_w} | \ov \rho_s^w - \rho_s | \, dm,
\end{align*}
where $L_\Phi$ is the Lipschitz constant of $\Phi$ and $\ov \rho_s^{w} := \int_K \rho_s \iota_w \, dm$. Then, for the replacement statement, for the two terms in the right-hand side, we rely on $\rho \in L^2(0,T; \cF)$; we bound the first term by applying \eqref{pfuc} and the second term by using $\cF \subset C(K)$. Thus, the replacement of the Glauber term in \eqref{pfud} reads
\begin{equation*}
  \frac1{3^M} \sum_{|w| = M} \int_0^t \bigg( \frac1{m(K_w)} \int_{K_w} \Phi(\rho_s) \, dm \bigg) \ov F_s^w \, ds  
  = \int_0^t \sum_{|w| = M} \int_{K_w} \Phi(\rho_s(z)) \ov F_s^w \, m(dz)ds.
\end{equation*}
Finally, we replace $\ov F_s^w$ by $F_s(z)$, which is easy to justify by using the regularity of the test function $F \in \cD_T$. This completes the proof of \eqref{pfzw}.

\subsection{Proof of Theorem \ref{t:HDL:un}}\label{s:uniqueness}

In this section, we provide a proof of Theorem \ref{t:HDL:un}.

\begin{proof}[Proof of Theorem \ref{t:HDL:un}] \cite[Section 7.2]{CG} contains the proof for $\Phi = 0$. We summarize the proof here to extend it to general $\Phi$ below. Assume that two weak solutions $\rho^0$ and $\rho^1$ exist. Based on \eqref{weak-form}, $\rho := \rho^0 - \rho^1$ satisfies
\begin{equation} \label{pfzy}
  0 = \int_K \rho_t F_t \, dm 
      + \int_0^t \int_K -\rho_s \partial_s F_s \, dm ds 
      + \frac23 \int_0^t \cE ( \rho_s, F_s) \, ds
\end{equation}
for all $t$ and all test functions $F$. We fix $t \in (0,T]$. There exists a sequence of test functions $\rho^n \in \cD_t$ that converges to $\rho$ in $L^2(0,t; \cF)$ as $n \to \infty$. We set $F_n(s,x) := \int_s^t \rho_r^n(x) \, dr$ for any $s \in [0,t]$ and any $x \in K$. Using $F_n$ as the test function in \eqref{pfzy}, the first integral is $0$, and the latter two converge as $n \to \infty$ to $R(t) := \int_0^t \int_K \rho_s^2 \, dm ds \geq 0$ and $\frac13 \cE(\int_0^t \rho_s \, ds) \geq 0$, respectively. Hence, both limits have to be $0$. This implies $\rho=0$ and thus $\rho^0 = \rho^1$.

For a general $\Phi$, we repeat the same construction. Then, in \eqref{pfzy}, the additional term
\begin{equation*}
  - \int_0^t \int_K \big( \Phi(\rho_s^0) - \Phi(\rho_s^1) \big) F_s \, dm ds
\end{equation*}
appears on the right-hand side. Applying the test function $F_n$ as above and taking $n \to \infty$, we obtain
\begin{equation*}
  R(t) 
  = \int_0^t \int_K \rho_t^2 \, dm ds
  = - \frac13 \cE \bigg( \int_0^t \rho_s \, ds \bigg) + \int_0^t \int_K \big( \Phi(\rho_s^0) - \Phi(\rho_s^1) \big) \int_s^t \rho_r \, dr dm ds
\end{equation*}
for all $t \in (0,T]$. We estimate the right-hand side from above,  using the Lipschitz constant $L_\Phi$ of $\Phi$, as
\begin{align*}
  \int_K \int_0^t \int_0^t  \big| \Phi(\rho_s^0) - \Phi(\rho_s^1) \big| |\rho_r| \,  dr  ds dm
  &\leq L_\Phi \int_K \bigg( \int_0^t |\rho_s| \, ds \bigg)^2 dm \\
  &\leq t L_\Phi \int_K \int_0^t \rho_s^2 \, ds dm
  = t L_\Phi R(t).
\end{align*}
We conclude that $R(t) \leq t L_\Phi R(t)$ for all $t \in (0,T]$. Since $R$ is nonnegative, we obtain $R(t) = 0$ for all $t \in [0, \frac1{L_\Phi}]$, and thus $\rho(t,x) = 0$ for all $x \in K$ and a.e.\ $t \in (0,\frac1{L_\Phi})$. Then, $(t,x) \mapsto \rho(t + \frac1{L_\Phi}, x)$ is a weak solution on the time interval $(0, T - \frac1{L_\Phi})$. Iterating the construction above, we obtain after $\lceil L_\Phi T \rceil$ steps that $\rho = 0$ in $L^2(0,T; \cF)$. 
\end{proof}

\section{Replacement of the Glauber term}
\label{s:repl}
 
This section justifies the replacement of the Glauber term, which is mentioned formally in \eqref{pfue}. The precise statement is given in Lemma \ref{l:replacement} below. It uses  
$\cD_T$, $V_N^w$, $\iota_w$ and $\ov F_s(V_N^w)$ introduced in 
\eqref{weak-form}, \eqref{pfuh}, \eqref{pfua} and \eqref{pftz},
the short-hand notation for limsup introduced in \eqref{pfub}, 
and the Markov inequality $\Q_N(|\Psi(\pi_\bullet^N)| > \e) \leq \frac1\e \E_{\mu_N} [|\Psi(\pi_\bullet^N)|]$ for any measurable function $\Psi$ on the path space. 

\begin{lem}[Main replacement] \label{l:replacement}
Let $T > 0$ and $F \in \cD_T$. Then
\begin{align*} 
  \limsup_{M,N} \E_{\mu_N} \bigg[ \sup_{0 \leq t \leq T} \bigg| \int_0^t U_{N,M}(\eta_s^N, F_s) \, ds \bigg|\bigg] = 0,
\end{align*}
where
\begin{align*}
U_{N,M}(\eta_s^N, F_s):=
\frac1{|V_N|} \sum_{x \in V_N^0} c_x(\eta_s^N) (1 - 2 \eta_s^N (x)) F_s(x) 
  - \frac1{3^M} \sum_{|w| = M} \Phi(\pi_s^N(\iota_w)) \ov F_s(V_N^w).
\end{align*}
\end{lem}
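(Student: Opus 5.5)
We follow the classical route of \cite[Chapter 5]{KL}: an entropy inequality reduces the problem to a variational bound on Dirichlet forms, and the variational problem is handled by a 1-block and a 2-blocks estimate adapted to the cells $K_w$.

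\textbf{Step 1 (reduction to a static estimate).} First we replace $F_s(x)$ by $\ov F_s(V_N^w)$ for $x \in V_N^w$. This costs at most $\|c\|_\infty \max_{|w|=M}\operatorname{osc}_{K_w} F_s$, which vanishes as $M\to\infty$ by uniform continuity of $F$. The $O(3^M)$ sites in $V_M$ contribute $O(3^{M-N})$ and are discarded. We also discard, for each $w$, the boundedly many sites $x$ near $\varphi_w(V_0)$ whose neighbourhood $\Lambda_x$ leaves $V_N^w$ or is not of bulk type. What remains is to bound
\[
\E_{\mu_N}\Big[\int_0^T \frac1{3^M}\sum_{|w|=M} |\ov F_s(V_N^w)|\,\Big|\frac1{|V_N^w|}\sum_{x\in V_N^w} \tau_x g(\eta_s^N) - \Phi(\pi_s^N(\iota_w))\Big|\,ds\Big],
\]
where $\tau_x g(\eta) := c_x(\eta)(1-2\eta(x))$. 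The $\sup_t$ is harmless because the integrand is bounded.

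We compare $\P_{\mu_N}$ with the reference measure $\nu_\varrho$, using $H(\mu_N|\nu_\varrho)\le C|V_N|$. The entropy inequality and the Feynman--Kac formula bound the expectation by
\[
\frac{C}{\gamma} + \int_0^T \sup_{\psi^2}\Big\{ E_{\nu_\varrho}[V\psi^2] - \frac{5^N}{\gamma |V_N|}\cD_N(\psi^2,\varrho)\Big\}\,ds .
\]
Inserting the lower bound \eqref{pfun}, which contains Proposition \ref{p:DNG:LB}, and recalling $5^N(3/5)^N/|V_N| = O(1)$, we arrive at the variational problem
\[
\sup_{\psi^2}\Big\{E_{\nu_\varrho}[V\psi^2] - \frac{5^N}{\gamma|V_N|}\Gamma_N(\psi^2,\varrho)\Big\} + \frac{C}{\gamma}.
\]
Since $5^N/3^N \to\infty$, it suffices to show that $E_{\nu_\varrho}[V\psi^2]\to 0$ uniformly over densities with $\Gamma_N(\psi^2,\varrho)\le C 3^N 5^{-N}$.

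One nuisance is that $\varrho$ is non-constant, so $\nu_\varrho$ is not invariant under swaps. On a single cell $K_w$ of level $M$, however, $\varrho$ varies by $o(1)$ as $M\to\infty$. We therefore replace $\nu_\varrho$ on $\{0,1\}^{V_N^w}$ by the constant-density measure $\nu_{\varrho(x_w)}$. The resulting change in $\Gamma$ is of the same order as the error term in \eqref{pfup}, so we plan to reuse that estimate from \cite{CG}.

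\textbf{Step 2 (1-block estimate).} Fix $\ell$ and decompose each $V_N^w$ into the $\ell$-subcells $\varphi_{wv}(V_{N-M-|v|}^0)$ with $|wv| = N-\ell$, whose sizes are bounded in $N$. We aim to replace $\frac1{|V_N^w|}\sum_x \tau_x g$ by the average over these microscopic $\ell$-cells $u$ of $\Phi_u(\ov\eta(u))$. Here $\Phi_u(\rho)$ is the $\nu_\rho$-average of $\tau_x g$ over $x\in u$; because the subcell shapes are self-similar, the weighted average of $\Phi_u$ over $u$ converges to $\Phi$ as $\ell\to\infty$, with the $r_\Lambda$ of \eqref{Phi}.

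To justify the replacement, we project $\psi^2$ onto each $\{0,1\}^{u}$. By convexity, the averaged projected density $\bar f$ on a reference cell $V_\ell$ has Kawasaki Dirichlet form of order $|V_N|^{-1}\cdot 3^N5^{-N}\to 0$. In the limit $\bar f$ is therefore supported on canonical measures, i.e. uniform on each particle-number hyperplane. The equivalence of ensembles on the finite graph $V_\ell$ then gives the estimate as $\ell\to\infty$.

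\textbf{Step 3 (2-blocks estimate), the main obstacle.} We must show that $\ov\eta(u)$ for a microscopic cell $u\subset V_N^w$ is close to the macroscopic average $\pi^N(\iota_w)$. Equivalently, for two $\ell$-cells $u,u'$ in the same $M$-cell, $|\ov\eta(u)-\ov\eta(u')|$ must be small under the constraint on $\Gamma_N$.

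Translation invariance is unavailable, so we plan to use the moving particle lemma of \cite{Chen2,CG}. It bounds the cost of exchanging $\eta(x),\eta(y)$ by the effective resistance $R(x,y)\lesssim (5/3)^{N-M}$ within $K_w$, times the Dirichlet form restricted to $K_w$. Summing over pairs and cells gives
\[
\sum_{|w|=M}\,\sum_{x,y\in V_N^w} E\big[(\psi(\eta^{xy})-\psi(\eta))^2\big] \lesssim 3^{N-M}(5/3)^{N-M}\,\Gamma_N = O(3^{N-M}5^N3^{-N}\cdot 3^N5^{-N}),
\]
which is small per pair after dividing by $|V_N^w|^2$. Averaging the projected density over all pairs $(u,u')$ then yields a density on $\{0,1\}^{u}\times\{0,1\}^{u'}$ that is asymptotically invariant under exchanges between $u$ and $u'$. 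The 1-block conclusion makes it concentrate on configurations where the two densities agree.

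The delicate points are the bookkeeping needed to make this averaging work without translations, using the self-similar labelling $v\mapsto \varphi_{wv}$ as the substitute; the scaling check that the resistance factor $(5/3)^{N}$ is beaten by the $5^N/3^N$ gain at fixed $M$; and the replacement of the average of $\ov\eta(u')$ over $u'$ by $\pi^N(\iota_w)$. Finally, Lipschitz continuity of $\Phi$ closes the argument, and we send $N\to\infty$, then $\ell\to\infty$, then $M\to\infty$.
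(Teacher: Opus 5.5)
\textbf{The gap is in your 2-blocks step (Step 3): the scaling claim is wrong, and the order of limits you build on it does not work.} You correctly state $R_{xy}\lesssim (5/3)^{N-M}$ inside $K_w$. In your display, however, two things go wrong. First, you undercount pairs: $V_N^w$ contains $|V_N^w|^2\asymp 3^{2(N-M)}$ pairs, not $3^{N-M}$. Second, you then replace $(5/3)^{N-M}$ by $(5/3)^N$. Redone correctly, the bound is
\[
  \sum_{|w|=M}\sum_{x,y\in V_N^w} E\big[(\psi(\eta^{xy})-\psi(\eta))^2\big]\lesssim 3^{2(N-M)}\Big(\frac53\Big)^{N-M}\Gamma_N \lesssim 3^{2(N-M)}\Big(\frac35\Big)^{M}.
\]
So the per-pair exchange cost is $O((3/5)^M)$, or $O(5^{-M})$ if you also average over $w$. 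The paper obtains the same $C2^{N_B}(3/5)^M$ in \eqref{pfwb}. This bound is uniform in $N$ but does \emph{not} vanish at fixed $M$, and it cannot. The macroscopic profile genuinely varies across an $M$-cell, so $|\ov\eta(u)-\ov\eta(u')|$ is of the order of the oscillation of $\rho_t$ on $M$-cells. Hence your claim that ``the resistance factor is beaten by the $5^N/3^N$ gain at fixed $M$'' is false.

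After $N\to\infty$ at fixed $M,\ell$, the limiting two-block density is therefore not exchange invariant between $u$ and $u'$. Your qualitative argument (compactness at fixed $\ell$, vanishing Dirichlet form, constancy on particle-number hyperplanes of the doubled system, equivalence of ensembles) has nothing to work with if you then send $\ell\to\infty$ before $M\to\infty$, as you propose.

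The fix is to take the order $N$, then $M$, then $\ell$. This is the paper's $\limsup_{N_B,m,M,N}$ in \eqref{pfxw}, the analogue of $N$, $\e$, $\ell$ in \cite{KL}. It is legitimate because the quantity in Lemma \ref{l:replacement} does not depend on $\ell$, and the 1-block error after $N\to\infty$ is bounded uniformly in $M$. If you insist on your stated order, you would instead need a quantitative two-blocks bound uniform in $\ell$ (e.g.\ via a spectral-gap or log-Sobolev estimate for the cross-exchange), which your proposal does not supply.

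\textbf{Your treatment of the nonconstant reference density is a valid alternative to the paper's.} The paper replaces the harmonic $\varrho$ by the $N_B$-harmonic extension that is constant on $K^I$. That costs the factor $2^{N_B}$ and the discarded cells in $K^B$, but yields one homogeneous $\nu_\rho$ on $V_N^I$, invariant under the global rotations $\sigma_\omega$. You instead change measure cell by cell. A swap multiplies the Radon--Nikodym factor by $\varrho(y)(1-\varrho(x))/(\varrho(x)(1-\varrho(y)))$, so the extra carr\'e du champ is $\lesssim\sum_{x\sim y}(\varrho(x)-\varrho(y))^2=2(3/5)^N\cE_N(\varrho)$, which is within budget. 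The price is that all averaging must be done inside each $M$-cell with $w$-dependent densities, and the 1-block and 2-blocks bounds must be uniform in $\rho\in[\rho_*,\rho^*]$.
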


The proof of Lemma \ref{l:replacement} is the main technical novelty of this paper. 
The remainder of this section is devoted to the proof.

\subsection{First preparation: a new choice of $\varrho$} 

For technical reasons (Lemma \ref{l:mov:part} below, the so-called Moving Particle lemma), we do not work with the harmonic function $\varrho$ from Section \ref{s:pf:varrCG}. Instead, we introduce a different function $\varrho$ that still satisfies the conditions in \eqref{pfum} given by $\varrho \in \cF$ and $\varrho |_{V_0} = \rho_B$, but which is constant in most of the interior of $K$, except on a vanishing area around the three boundary sites. 

First, we introduce notation. Let $1 \leq N_B \leq N-1$ be an integer. We define the closed subsets of $K$ given by 
$$
  K^B := \bigcup_{a \in V_0} K_{v^a}, \qquad K^I := \ov{K \setminus K^B},
$$  
where $v^a$ is characterized by $|v^a| = N_B$ and $a \in K_{v^a}$. Note that $K^B \cap K^I$ is a finite set of 6 elements, which are precisely those sites of $V_{N_B}$ that are connected on $V_{N_B}$ to some boundary point $a \in V_0$.
 We interpret $K^B$ as the (thick) `boundary' part of $K$ and $K^I$ as the (reduced) `interior' part. We send $N_B$ to $\infty$ at the very end, and thus we may assume that it is much smaller than $N$ and $M$. 
We further set
\begin{equation*}
  V_N^I := V_N \cap K^I, \qquad \Omega_N^I := \{0,1\}^{V_N^I} 
\end{equation*}
to be the lattice with the boundary areas removed and the set of particle configurations on $V_N^I$, respectively. 

Recall the terminology and properties of harmonic functions stated in Section \ref{s:HDL:Eqn:calc}. We take $\varrho$ as the $N_B$-harmonic extension of
\begin{equation} \label{pfty} 
  \varrho(x)
  := \left\{ \begin{aligned}
    &\rho_B(x)
    &&x \in V_0 \\
    &\rho
    &&x \in V_{N_B}^0,
  \end{aligned} \right.
  \qquad \rho := \frac13 \sum_{a \in V_0} \rho_B(a).
\end{equation}
Then, $\varrho \in \cF$, i.e.\ it satisfies \eqref{pfum} as required. Moreover,
$ 
  \rho_* \leq \varrho \leq \rho^*
$
on $K$, where we recall from \eqref{pfum} that $0 < \rho_* \leq \rho^* < 1$ are the minimum and the maximum of $\rho_B$. Also, $\varrho$ is constant on $K^I$. Furthermore, $\cE_N(\varrho)$ is independent of $N$ for $N > N_B$. The precise choice of $\rho \in [\rho_*, \rho^*]$ is not important; the proof holds for any fixed value in $[\rho_*, \rho^*]$. We  keep $\varrho$ fixed in the remainder of Section \ref{s:repl}.  

With this new choice of $\varrho$, the lower bound on the Dirichlet form in Section \ref{s:pf:varrCG} changes. Indeed, while $\cE_N(\varrho)$ is still constant in $N$, it is not bounded in $N_B$. However, using that $\varrho$ is constant on $K^I$ we get
\begin{equation*}
   2 \Big( \frac35 \Big)^N \cE_N(\varrho)
   = \sum_{x \in V_N} \sum_{\substack{ y \in V_N \\ y \sim x }} [\varrho(x) - \varrho(y)]^2   
   = \sum_{a \in V_0} \sum_{x \in \ov V_N^{v^a}} \sum_{\substack{ y \in \ov V_N^{v^a} \\ y \sim x }} [\varrho(x) - \varrho(y)]^2,
 \end{equation*}
where $\ov V_N^{v^a} := V_N \cap K_{v^a}$ (this is $V_N^{v^a}$ with the three boundary sites included).
Since $\varrho$ is $N_B$-harmonic and $\ov V_N^{v^a}$ is a translated, miniature version of $V_{N - N_B}$, we get
\begin{align*}
   \sum_{x \in \ov V_N^{v^a}} \sum_{\substack{ y \in \ov V_N^{v^a} \\ y \sim x }} [\varrho(x) - \varrho(y)]^2
   &= \sum_{x' \in V_{N - N_B}} \sum_{\substack{ y' \in V_{N - N_B} \\ y' \sim x' }} [\varrho(\varphi_{v^a}(x')) - \varrho(\varphi_{v^a}(y'))]^2 \\
   &= 2 \Big( \frac35 \Big)^{N - N_B} \cE_{N - N_B}(\varrho \circ \varphi_{v^a})
   \leq C \Big( \frac35 \Big)^{N - N_B}.
 \end{align*} 
Then, applying the inequalities from Section \ref{s:pf:Dform}, we get
\begin{equation} \label{pfvd}
  \cD_N(\psi^2)
  \geq \Gamma_N (\psi^2) - C \Big( \frac35 \Big)^{N - N_B} 
\end{equation}
for any density $\psi^2$ with respect to $\nu_\varrho$,
where we have abbreviated
\begin{equation*}
  \cD_N(\psi^2) := \cD_N(\psi^2, \varrho)
  \quad \text{and} \quad
  \Gamma_N (\psi^2) := \Gamma_N (\psi^2, \varrho)
\end{equation*}
with the understanding that $\varrho$ is fixed in the remainder of Section \ref{s:repl}.

\subsection{Second preparation: a Moving Particle lemma} 

The following Lemma is a particularized version of \cite[Lemma 5.1]{CG}, originally proved in \cite[Theorem 1.1]{Chen2}. It is a statement about the Sierpi\'nski gasket, and does not depend on the choice of the particle system. \comm{[Differences with CG: they do general edge weights $c_{xy}$, but we put them as $1$. They do a general graph, but we apply it with $V_N^I$ only. They allow for any $\rho \in [0,1]$. They sum over all edges once, whereas our sums count double (hence the 2 in $R_{zz'}$ and the $\frac12$ in the RHS)]}

\begin{lem}[Moving Particle] \label{l:mov:part}
For any $N > N_B \geq 1$, any $f : \Omega_N^I \to \R$ and any $z,z' \in V_N^I$ 
  \begin{equation} \label{mov:part}
     E_{\nu_\rho} \big[ \big( f \big( \eta^{zz'} \big) - f(\eta) \big)^2 \big]
     \leq \frac12 R_{zz'}(V_N^I) E_{\nu_\rho} \Big[ \sum_{x \in V_N^I} \sum_{\substack{ y \in V_N^I \\ y \sim x} } \big( f \big( \eta^{xy} \big) - f(\eta) \big)^2 \Big],
   \end{equation} 
   where 
   \begin{equation} \label{Rzz}
     R_{zz'}(V_N^I) := \sup \bigg\{ \frac{ 2(g(z) - g(z'))^2 }{ \sum_{x \in V_N^I} \sum_{y \in V_N^I, \, y \sim x } (g(x) - g(y))^2 } \, \bigg| \, g : V_N^I \to \R \bigg\}
   \end{equation} 
   is the effective resistance on $V_N^I$ between $z$ and $z'$.
\end{lem}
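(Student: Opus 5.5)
The plan is to derive the inequality from the general moving particle lemma of \cite[Theorem 1.1]{Chen2} (in the form \cite[Lemma 5.1]{CG}) for the graph $(V_N^I, E_N|_{V_N^I})$ with unit conductances and the homogeneous product measure $\nu_\rho$. We first check that the hypotheses hold. $V_N^I$ is a finite connected graph: it is $K^I \cap V_N$, i.e.\ $V_N$ with the three corner $N_B$-cells removed apart from their inner vertices, and $N > N_B \geq 1$ guarantees that it is nonempty and connected. Moreover, $\nu_\rho$ is invariant under every swap $\eta \mapsto \eta^{xy}$, since it is a product measure with constant density $\rho$. This is exactly why $\varrho$ was chosen constant on $K^I$, and the lemma is only stated there. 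The remaining task is bookkeeping. In our normalization every edge is counted twice in the double sum, which produces the factor $\frac12$ on the right-hand side of \eqref{mov:part}. The same double counting appears in the denominator of \eqref{Rzz}, and the factor $2$ in the numerator makes $R_{zz'}(V_N^I)$ the usual effective resistance $\sup\{(g(z)-g(z'))^2/\sum_{\{x,y\}}(g(x)-g(y))^2\}$.

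For a self-contained argument we would proceed as follows. Fix a unit flow $\theta$ from $z$ to $z'$ on $V_N^I$ attaining the effective resistance, i.e.\ $\sum_e \theta(e)^2 = R_{zz'}$ (Thomson's principle). The key input of \cite{Chen2} is an \emph{octopus-type} decomposition of the swap $\eta^{zz'}$. Rather than using one shortest path, one averages over paths weighted by the flow. Each path from $z$ to $z'$ realizes $\eta^{zz'}$ as a composition of nearest-neighbour swaps $\eta\mapsto\eta^{x_iy_i}$, going out along the path and back. The ``back'' leg is what makes the exchange exact rather than a translation of a particle. We then apply the telescoping inequality with weights chosen proportional to $\theta$ and use Cauchy--Schwarz. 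Swap-invariance of $\nu_\rho$ turns each term $E_{\nu_\rho}[(f(\xi^{xy})-f(\xi))^2]$, evaluated at an intermediate configuration $\xi$, back into the corresponding term at $\eta$. Summing over edges then gives the constant $\sum_e\theta(e)^2=R_{zz'}$.

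The main obstacle is that the naive path decomposition yields the path length, or the sum over edges of $\theta(e)$ times the path length, instead of the resistance. Recovering the sharp constant $R_{zz'}$ is precisely the nontrivial content of \cite{Chen2}, which handles it through an induction on the electrical network via series--parallel reductions and the octopus inequality. We would therefore not reprove it. Instead we would invoke \cite[Theorem 1.1]{Chen2} for the graph $V_N^I$ and only verify the two hypotheses above, namely finite connectedness and swap-invariance of $\nu_\rho$, together with the constant conventions.
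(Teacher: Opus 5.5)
Your proposal matches the paper. The paper also gives no independent argument: it obtains the lemma by specializing \cite[Lemma 5.1]{CG} (originally \cite[Theorem 1.1]{Chen2}) to unit conductances on the induced graph $V_N^I$ and the homogeneous, swap-invariant measure $\nu_\rho$, with the $\frac12$ on the right-hand side and the $2$ in the definition of $R_{zz'}(V_N^I)$ compensating for the double counting of edges, exactly as you describe.

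Two small corrections. First, your flow-weighted telescoping sketch would not by itself produce $R_{zz'}(V_N^I)$; Chen's proof instead removes vertices one at a time by star--mesh transforms, each step controlled by the octopus inequality. Second, for $N_B=1$ one has $K^I=\emptyset$, so the statement is vacuous there rather than being about a nonempty connected graph.
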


To apply \eqref{mov:part}, we need a lower bound on $R_{zz'}(V_N^I)$. In the context of \eqref{mov:part}, for any word $\omega$ with $N_B \leq |\omega| < N$ and $K_\omega \cap V_0 = \emptyset$, and any $z,z' \in K_\omega  \cap V_N^I$ 
  \begin{equation} \label{pfux}
    R_{zz'}(V_N^I) \leq C \Big( \frac53 \Big)^{N - |\omega|}
  \end{equation}
for some constant $C = C(\rho_*, \rho^*) > 0$.
We refer to \cite[Lemma 1.6.1]{Str} for a proof. While this lemma states \eqref{pfux} on $V_N$, the proof applies verbatim in our case on $V_N^I$. The reason is that the proof starts from \eqref{Rzz}, and bounds the denominator from below by replacing $V_N^I$ by the subset $K_\omega \cap V_N$.  
\comm{[own notes p.126]}

\subsection{Reduction to 1-block and 2-blocks estimates} 
\label{s:repl:to12block}

We prove Lemma \ref{l:replacement} in a reversed order; we show that the estimate is implied by a simpler one, and repeat this process of finding simpler, sufficient estimates consecutively until the resulting estimate can be proven directly by the law of large numbers. In each step we treat, as before, all variables in the limsup (for the moment these are $M, N$) as sufficiently large with regard to their ordering. Any generic constants $C$ that appear are positive and independent of the variables in the limsup.

For the first step, we introduce some notation. Recall the finite range $L_0$ of $c_x$ and $L(x,y)$ as the shortest path length between $x$ and $y$ on $V_N$. 
For any word $w$ with $|w| = M$, let
\begin{equation} \label{pfut} 
  \begin{aligned}
   V_N^{L_0,w} &:= \Big\{ x \in V_N^w : \min_{y \in V_M} L(x,y) \geq L_0 + 1 \Big\} \subset V_N^w, \\
  V_* &:= V_N^0 \setminus \left(\bigcup_{|w| = M} V_N^{L_0,w} \right).
\end{aligned}
\end{equation}
Since $|V_N^w \setminus V_N^{L_0,w}| \leq C$ , we have $|V_*| \leq C |V_M| \leq C' 3^M$. 
\medskip

\noindent
\textbf{Step 1.} Lemma \ref{l:replacement} is implied by 
\begin{equation} \label{pfzu} 
  \limsup_{M,N} \E_{\mu_N} \bigg[ \frac1{3^M} \sum_{|w| = M} \int_0^T W_w (\eta_s^N) \, ds \bigg] = 0,
\end{equation}
where 
\begin{align} \label{pfvn}
  W_w(\eta) &:= \bigg| \frac1{|V_N^w|} \sum_{x \in V_N^{L_0,w}} c_x(\eta) (1 - 2 \eta (x)) - \Phi(\ov \eta_w) \bigg|, \qquad \ov \eta_w := \frac1{|V_N^w|} \sum_{x \in V_N^w} \eta (x).
\end{align} 

The proof for this statement follows from several simple replacements and manipulations applied to the limsup in Lemma \ref{l:replacement}. First, since $F_s \in C(K)$ and $c_x(\eta)$ is uniformly bounded, we may replace $F_s(x)$ by $\ov F_s(V_N^w)$. Second, by the observation below \eqref{pfvm} and the Lipschitz continuity of $\Phi$, we may also replace $\pi_s^N(\iota_w)$ by $\ov \eta_w$. Third, we write the sum of $x$ over $V_N^0$ as the joint sum of all words $w$ with $|w| = M$ and all $x \in V_N^{L_0,w}$. This misses the points $x \in V^*$, but since $|V_*| \leq C' 3^M$, these points yield only a vanishing contribution to the sum. Fourth, we bring the absolute value signs inside the time integral and inside the sum over $w$, put $|\ov F_s(V_N^w)|$ outside, and bound it by the maximum of $F_s(z)$ over $s \in [0,T]$ and $z \in K$. Noting that the integrand of the time integral has become an increasing function of $t$, we conclude that \eqref{pfzu} implies Lemma \ref{l:replacement}.
\medskip

To state the next step, we introduce the nonhomogeneous product measure $\nu_\varrho$ on $\Omega_N$, which is characterized by its marginals
\[
  \nu_\varrho( \eta(x) = 1 ) = \varrho(x) \qquad \text{for all } x \in V_N. 
\]

\noindent
\textbf{Step 2.} We may reduce \eqref{pfzu} to 
\begin{equation} \label{pfzt} 
  \limsup_{N_B,M,N} \sup_f
  E_{\nu_\varrho} \bigg[ \frac1{3^M} \sum_{|w| = M} W_w(\eta) f(\eta) \bigg]
  = 0,
\end{equation}
where the supremum is taken over all densities $f:\Omega_N\to[0,\infty)$ with respect to $\nu_\varrho$ with 
\begin{equation} \label{pfzr}
  \Gamma_N (f) \leq C 2^{N_B} \Big( \frac35 \Big)^{N}.
\end{equation} 

To prove this,
we use the entropy inequality with constant $2^{N_B} |V_N|$ (chosen a posteriori) and reference measure $\nu_\varrho$ to get
\begin{align}\label{pfuw}
&\E_{\mu_N} \bigg[ \frac1{3^M} \sum_{|w| = M} \int_0^T W_w (\eta_s^N) \, ds \bigg] \notag \\
&\quad  \leq \frac{H(\mu_N | \nu_\varrho)}{2^{N_B} |V_N|} + \frac1{2^{N_B} |V_N|} \log \E_{\nu_\varrho} \bigg[ \exp \bigg( \frac{2^{N_B} |V_N|}{3^M} \sum_{|w| = M} \int_0^T W_w (\eta_s^N) \, ds \bigg) \bigg],
\end{align}
where 
\[
  H(\mu_N | \nu_\varrho) := E_{\nu_\varrho} \Big[ \frac{d \mu_N}{d \nu_\varrho} \log \frac{d \mu_N}{d \nu_\varrho} \Big]
\] 
is the relative entropy. 
Since the maximum and minimum of $\varrho$ are attained at the boundary values $\rho_B$, we have that $H(\mu_N | \nu_\varrho) \leq C(\rho_B) |V_N|$ uniformly in $N_B, \mu_N$. 
To bound the second term in the right-hand side of \eqref{pfuw} we apply the Feynman-Kac formula \cite[Appendix 1, Lemma 7.2]{KL} to obtain 
\comm{[pf of $H$-bound: $f$ is a density with respect to $\nu_\rho$, so $f(\eta) \leq \max_\zeta \nu_\rho(\zeta)^{-1} = C_\rho^{|V_N|}$. Then, estimate $\log f$ like this, so that $E_{\nu_\rho} [ f] = 1$ remains in $H_N(f)$]}
\comm{[KL notes Sec 32.1]}
\begin{align}\label{pfuv}
&\frac1{2^{N_B} |V_N|} \log \E_{\nu_\varrho} \bigg[ \exp \bigg( \frac{2^{N_B} |V_N|}{3^M} \sum_{|w| = M} \int_0^T W_w (\eta_s^N) \, ds \bigg) \bigg] \notag\\
&\quad \leq  T \sup_f \bigg( E_{\nu_\varrho} \bigg[ \frac1{3^M} \sum_{|w| = M} W_w (\eta) f(\eta) \bigg] - \frac{5^N}{2^{N_B} |V_N|} \cD_N(f) \bigg),
\end{align}
where the supremum is taken over all densities $f:\Omega_N\to[0,\infty)$ with respect to $\nu_\varrho$ and $\cD_N(f)$ is the Dirichlet form of $\cL_N$. Since $W_w$ is uniformly bounded, the expectation in the right-hand side in \eqref{pfuv} is also uniformly bounded, and thus we may restrict the class of $f$ to satisfy $\cD_N(f) \leq C 2^{N_B} (\frac35)^N$. Using \eqref{pfvd}, we may weaken this bound to \eqref{pfzr},
and bound the Dirichlet form in \eqref{pfuv} as
\begin{equation*}
  - \frac{5^N}{2^{N_B} |V_N|} \cD_N(f) \leq C \Big( \frac56 \Big)^{N_B}.
\end{equation*}
Collecting all bounds obtained so far, we obtain
\begin{align*}
  \E_{\mu_N} \bigg[ \frac1{3^M} \sum_{|w| = M} \int_0^T W_w (\eta_s^N) \, ds \bigg]
  \leq C \Big( \frac56 \Big)^{N_B} + T \sup_f E_{\nu_\varrho} \bigg[ \frac1{3^M} \sum_{|w| = M} W_w (\eta) f(\eta) \bigg].
\end{align*}
Thus, taking $\limsup_{N_B,M,N}$ on both sides proves the claim in step 2.
\comm{\textbf{Rationale behind the next 1- and 2-blocks stuff}. The problem with proving \eqref{pfzt} directly is that the function $W_w$ depends on $N$, because while the $j$-cell $K_w$ is fixed, the grid $V_N^w$ grows finer in $N$. The fix is to use words $\omega$ of length $N-m$, as then $V_N^\omega$ stays constant in size, which is of $O(3^m)$. }  
\medskip

\noindent
\textbf{Step 3.} Reduction to $V_N^I$ and constant $\rho = \varrho |_{K^I}$ (recall \eqref{pfty}).  \eqref{pfzt} holds if
\begin{equation} \label{pfuz}
  \limsup_{N_B,M,N} \sup_f
  E_{\nu_\rho} \bigg[ \frac1{3^M} \sum_{w \in \cW_M^I} W_w(\eta) f(\eta) \bigg]
  = 0,
\end{equation}
where $\nu_\rho$ is the product measure on $\Omega_N^I := \{0,1\}^{V_N^I}$ with constant density $\rho = \varrho |_{K^I}$, 
\begin{equation*}
  \cW_M^I := \big\{ \text{words } w : |w| = M, \, K_w \subset K^I \big\},
\end{equation*}
and the supremum is taken over all densities $f:\Omega_N^I\to[0,\infty)$ with respect to $\nu_\rho$ which satisfy
\begin{equation} \label{pfva}
  \Gamma_N^I(f)
  := 
  \frac12 E_{\nu_\rho} \Big[ \sum_{x \in V_N^I} \sum_{ \substack{ y \in V_N^I \\ y \sim x } } \big[ \sqrt{ f(\eta^{xy}) } - \sqrt{ f(\eta) } \big]^2 \Big]
  \leq C 2^{N_B} \Big( \frac35 \Big)^N.
\end{equation}

To prove this, 
consider the expectation in \eqref{pfzt} for arbitrary admissible $f$.
We split the sum over $w$ into $w \in \cW_M^I$ and the remaining words $ \cW_M^B := \{ \text{words } w : |w| = M, \, K_w \subset K^B \}$. For $w \in \cW_M^B$, we simply apply the uniform estimate $W_w(\eta) \leq C$. Recalling that $f$ is a density with respect to $\nu_\varrho$ and noting that $|\cW_M^B| = 3^{M - N_B + 1}$, we bound  the expectation in \eqref{pfzt} from above by
\begin{align} \label{pfvb}
  \frac C{3^{N_B}} + \sum_{\eta \in \Omega_N} \frac1{3^M} \sum_{w \in \cW_M^I} W_w(\eta) f(\eta) \nu_\varrho(\eta).
\end{align}
We split $\eta = (\eta^I, \eta^B)$, where $\eta^I = \eta |_{V_N^I}$, $\eta^B = \eta |_{V_N^B}$ and $V_N^B := V_N \setminus V_N^I$. Note that for any $w \in \cW_M^I$, $W_w$ only depends on $\eta$ through $\eta^I$. Hence, \eqref{pfvb} reads as
\begin{align*} 
  \frac C{3^{N_B}} + \sum_{\eta^I \in \Omega_N^I} \frac1{3^M} \sum_{w \in \cW_M^I} W_w(\eta^I) \ov f(\eta^I) \nu_\varrho(\eta^I),
\end{align*}
where $\nu_\varrho(\eta^I) = \nu_\rho(\eta^I)$ by construction of $\varrho$, and
\begin{equation*}
  \ov f(\eta^I) := \sum_{\eta^B \in \Omega_N^B } f(\eta^I, \eta^B) \nu_\varrho(\eta^B), \qquad \Omega_N^B := \{0,1\}^{V_N^B}
\end{equation*} 
is a marginal of $f$. Hence, to complete the proof of the claim in Step 3, it is left to show that $\ov f$ satisfies the bound in \eqref{pfva}. This follows from the convexity of the carr\'e du champ operator (setting $\psi^2 := f$):
\begin{align*}
  \Gamma_N^I (\ov f)
  &\leq \sum_{\eta^B}  \Gamma_N^I \big( f(\cdot, \eta^B) \big) \nu_\varrho(\eta^B) \\
  &= \frac12 \sum_{\eta^B} \sum_{\eta^I} \Big[ \sum_{x \in V_N^I} \sum_{ \substack{ y \in V_N^I \\ y \sim x } } [\psi((\eta^I)^{xy}, \eta^B) - \psi(\eta^I, \eta^B)]^2 \Big] \nu_\rho(\eta^I) \nu_\varrho(\eta^B) \\
  &= \frac12 \sum_\eta \Big[ \sum_{x \in V_N^I} \sum_{ \substack{ y \in V_N^I \\ y \sim x } } [\psi(\eta^{xy}) - \psi(\eta)]^2 \Big] \nu_\varrho(\eta)
  \leq \Gamma_N (f) \leq C 2^{N_B} \Big( \frac35 \Big)^N. 
\end{align*}
\medskip

In the next step, we split \eqref{pfuz} into a 1-block and a 2-blocks estimate. On the Sierpi\'nski triangle, the word `block' is better replaced by `triangle', but we stick with the common terminology. We interpret the $M$-cells $K_w$ as the large blocks (small on the macroscopic scale) and, for $m \in \N$, the $(N-m)$-cells $K_\omega$ as the small blocks (large on the microscopic scale), where the words $\omega$ are of length $|\omega| = N-m > M = |w|$. Note that as $N \to \infty$, the large block $K_w$ remains unaltered, whereas the small block $K_\omega$ reduces in size, while the large discretized block $V_N^w$ gets finer (i.e.\ contains more sites), whereas the small discretized block $V_N^\omega$ remains a rescaled and translated copy of the $N$-independent $V_m^0$. We will occasionally decompose a word $\omega$ as the concatenation $wv$ of the words $w$ and $v$ of sizes $|w| = M$ and $|v| = N-M-m$. We will reserve the symbols $w,v,\omega$ for words of the particular aforementioned sizes.
\medskip

\noindent
\textbf{Step 4.} \eqref{pfuz} follows from to the 1-block and 2-blocks estimates
\begin{equation} \label{pfxx}
  \limsup_{N_B, m, N} \sup_f
  E_{\nu_\rho} \bigg[ \frac1{3^{N-m}} \sum_{\omega \in \cW_{N-m}^I} W_\omega(\eta) f(\eta) \bigg]
  = 0,
\end{equation}
\begin{equation} \label{pfxw}
  \limsup_{N_B, m, M, N} \sup_f 
  E_{\nu_\rho} \bigg[ \frac1{3^M} \sum_{w \in \cW_M^I} \frac1{3^{2k}} \sum_{|v| = k} \sum_{|v'| = k}  f(\eta) |\ov \eta_{wv} - \ov \eta_{wv'}| \bigg]
  = 0,
\end{equation} 
where $k := N-M-m$, $\nu_\rho$ and the supremum is as in Step 3, and $\ov \eta_\omega$ is the average of $\eta$ over $V_N^\omega$ defined in \eqref{pfvn}.
\medskip

To prove this, 
let $w \in \cW_M^I$. First, similar to the procedure in the proof of Step 1, we split off from the sum in $W_w$ over $x \in V_N^{L_0,w}$ (see \eqref{pfvn}) the sites around $V_{N-m}$. Analogously to \eqref{pfut} we set
 \begin{equation*}
   V_N^{L_0,\omega} = \Big\{ x \in V_N^\omega : \min_{y \in V_{N-m}} L(x,y) \geq L_0 + 1 \Big\}
 \end{equation*}  
and $V_*^w := V_N^{L_0,w} \setminus \cup_{|v| = k} V_N^{L_0,wv}$. Setting $g_x(\eta) := c_x(\eta) (1 - 2 \eta (x))$, we rewrite the sum in $W_w$ as  
\begin{align*}
  \frac1{|V_N^w|} \sum_{x \in V_N^{L_0,w}} c_x(\eta) (1 - 2 \eta (x)) 
  = \frac1{|V_N^w|} \sum_{|v| = k} \sum_{x \in V_N^{L_0,wv}} g_x(\eta)
    + \frac1{|V_N^w|} \sum_{x \in V_*^w} g_x(\eta).
\end{align*}
Since $g_x(\eta)$ is uniformly bounded, the second term in the right-hand side of the last expression is bounded in absolute value by $C |V_{N-m}^w| / |V_N^w| \leq C' 3^{-m}$, which vanishes in the limit $m\to\infty$.

Then, adding and subtracting averages over $V_N^{wv}$, we obtain
\begin{align*} 
  W_w(\eta)  
  &\leq 
  \frac C{3^m} + \bigg| \frac1{|V_N^w|} \sum_{|v| = k} \sum_{x \in V_N^{L_0,wv}} g_x(\eta) - \frac1{3^k} \sum_{|v| = k} \Phi(\ov \eta_{wv}) \bigg| + \bigg| \frac1{3^k} \sum_{|v| = k} \Phi(\ov \eta_{wv}) - \Phi(\ov \eta_w) \bigg| \\ 
 &\leq \frac C{3^m} + \frac1{3^k} \sum_{|v| = k} \bigg| \frac{3^k}{|V_N^w|} \sum_{x \in V_N^{L_0,wv}} g_x(\eta) -  \Phi(\ov \eta_{wv}) \bigg|
   + \frac1{3^k} \sum_{|v| = k} \big| \Phi(\ov \eta_{wv}) - \Phi(\ov \eta_w) \big|. \notag
\end{align*}
Noting that  
\[
  \frac{3^k |V_N^{wv}|}{|V_N^w|} = 3^k \frac{3^m - 1}{3^{N-M} - 1} = \frac{1 - 3^{-m}}{1 - 3^{M-N}},
\]
we obtain
\begin{align*}
  W_w(\eta)  
  &\leq \frac C{3^m} + \frac1{3^k} \sum_{|v| = k} W_{wv}(\eta)
   + \frac{L_\Phi}{3^k} \sum_{|v| = k} \big| \ov \eta_{wv} - \ov \eta_w \big| \\
 &\leq \frac{C'}{3^m} + \frac1{3^k} \sum_{|v| = k} W_{wv}(\eta)
   + \frac{L_\Phi}{3^{2k}} \sum_{|v| = k} \sum_{|v'| = k} \big| \ov \eta_{wv} - \ov \eta_{wv'} \big|, 
\end{align*}
where $L_\Phi$ is the Lipschitz constant of $\Phi$.
Applying this estimate to \eqref{pfuz} completes the proof of the claim in Step 4.
\medskip

In the remainder, we prove \eqref{pfxx} and \eqref{pfxw} in the following two subsections. Both proofs are inspired by \cite[Sections 5.4 and 5.5]{KL}. A major difference with those proofs is that there is no simple translation invariance on $V_N^I$.

\subsection{Proof of the 1-block estimate \eqref{pfxx}}
\label{s:repl:1block}

We relieve notation by writing $\eta_x := \eta(x)$ whenever convenient. Henceforth, the dependence of the Glauber rates $c_x$ and the related path-length function $L$ on $N$ becomes important; we denote them as $c_x^N$ and $L^N$.
\medskip

\noindent  
\textbf{Step 1}: Reduction to a generic copy $V_m^0$. \eqref{pfxx} holds if 
\begin{equation} \label{pfxr}  
  \limsup_{N_B,m,N} 
  \sup_f 
  E_{\nu_\rho} [ U(\xi) f (\xi) ]
  = 0,
\end{equation} 
where 
\[
  U(\xi) := \bigg| \frac1{|V_m^0|} \sum_{y \in V_m^{L_0}} c_y^m(\xi) (1 - 2 \xi_y) - \Phi(\ov \xi) \bigg|, \qquad 
  \ov \xi := \frac1{|V_m^0|} \sum_{y \in V_m^0} \xi_y
\] 
has replaced the previous term $W_\omega(\eta)$, 
\[
  V_m^{L_0} := \Big\{ x \in V_m : \min_{a \in V_0} L^m(x,a) \geq L_0 + 1 \Big\}
\]
and the supremum is taken over all densities $f$ on $\Omega_m^0 := \{0,1\}^{V_m^0}$ with respect to $\nu_\rho$  that satisfy
\begin{equation*}
  \Gamma_m^0 (f) := \frac12 E_{\nu_\rho} \Big[ \sum_{x \in V_m^0} \sum_{ \substack{ y \in V_m^0 \\ y \sim x } } [\psi(\eta^{xy}) - \psi(\eta)]^2 \Big] \leq C \frac{2^{N_B} 3^m}{5^N}.
\end{equation*} 
\medskip
 
To prove this, we
first reveal some structure of $W_\omega(\eta)$; it only depends on $\eta \in \Omega_N^I$ through $\eta_\omega := \eta|_{V_N^\omega}$, and moreover $W_\omega(\eta)$ is, as a function of $\eta_\omega$, independent of $\omega$. To see this, recall that
\[
  W_\omega(\eta)
  = \bigg| \frac1{|V_N^\omega|} \sum_{x \in V_N^{L_0,\omega}} c_x^N(\eta) (1 - 2 \eta (x)) - \Phi(\ov \eta_\omega) \bigg|.
\] 

We deal with the components of $W_\omega(\eta)$ that depend on $\omega$ one by one. Since $V_N^\omega = \varphi_\omega(V_m^0)$, $V_N^\omega$ is a translated, miniature version of $V_m^0$. In particular, we have that $|V_N^\omega| = |V_m^0|$ is independent of $\omega, N$ and that the sum over $x \in V_N^{L_0,\omega}$ can be re-indexed as a sum over $y \in V_m^{L_0}$ with $x = \varphi_\omega(y)$. Then, for any $x \in V_N^\omega$, we have, setting $y = \varphi_\omega^{-1}(x) \in V_m^0$,
\begin{equation} \label{pfvl}
  \eta (x) 
  = \eta (\varphi_\omega(y))
  = \eta_\omega (\varphi_\omega(y))
  =: \xi(y;\omega),
\end{equation}
where $\xi(\cdot;\omega) \in \{0,1\}^{V_m^0}$ is essentially the same as $\eta_\omega$ since $V_N^\omega = \varphi_\omega(V_m^0)$. Finally, regarding $c_x^N(\eta)$, recall $\Lambda_x^N$ from \eqref{Lambda:x} (we add $N$ in the superscript to highlight the dependence on $N$) and \eqref{cx}. For all $x \in V_N^{L_0,\omega}$ we have $\Lambda_x^N \subset V_N^\omega$, and thus $\Lambda_y^m = \varphi_\omega^{-1}(\Lambda_x^N) \subset V_m^0$. In particular, $\Lambda_x^N$ and $\Lambda_y^m$ have the same shape $\Lambda \in \cS$. In addition, 
\begin{equation*}
  \eta |_{\Lambda_x^N} 
  = \eta_\omega |_{\Lambda_x^N} 
  = \xi(\cdot; \omega) |_{\Lambda_y^m}, 
\end{equation*}
where the last inequality is understood as functions on $\{0,1\}^\Lambda$. Finally,
\begin{align*}
  c_x^N(\eta)
  = c \big( \eta_\omega |_{\Lambda_x^N} ; \Lambda \big)
  = c \big( \xi(\cdot; \omega) |_{\Lambda_y^m} ; \Lambda \big)
  = c_y^m( \xi(\cdot; \omega) ).
\end{align*}
Identifying $\xi(\cdot; \omega)$ with $\eta_\omega$, we obtain $W_\omega(\eta) = U(\eta_\omega)$.
 
Next, we introduce an alternative to the usual translation mapping $\tau_x$ on the discrete torus $\T_N^d$. This is needed because the Sierpi\'nski gasket is not translation invariant. Instead, we use the fact that it is self-similar. In particular, $V_N^\omega = \varphi_\omega(V_m^0)$ for each $\omega \in \cW_{N-m}^I$. We use this to permute $\eta$ by shifting $\eta_\omega$ over the words in $\cW_{N-m}^I$. Precisely, we decompose $\eta$ into the components $\eta_\omega$ for all $\omega \in \cW_{N-m}^I$ and the remaining component
$
 \eta_* := \eta |_{V_{N - m}^0}.
$
To shift over the words, we label
\begin{align} \label{pfuy}
  \cW_{N-m}^I = \{\omega_0, \omega_1, \ldots, \omega_{J-1}\}, \qquad J := |\cW_{N-m}^I| = 3^{N-m} - 3^{N-m-N_B+1}. 
\end{align}
We define $\omega_i + \omega_j := \omega_{i+j}$, where the addition of indices is understood modulo $J$. We write the first word as $o := \omega_0$. It satisfies $\omega + o = \omega$ for all $\omega \in \cW_{N-m}^I$.
Finally, we introduce the index shift as the rotation permutation $\sigma_\omega$ given by
\begin{equation*}
  (\sigma_{\omega'} \eta)_\omega := \eta_{\omega + \omega'},
  \qquad (\sigma_{\omega'} \eta)_* := \eta_*
\end{equation*}
for all words $\omega, \omega' \in \cW_{N-m}^I$.

With $W_\omega(\eta) = U(\eta_\omega)$ proved and $\sigma_\omega$ introduced we turn to \eqref{pfxx}.
Since $\nu_\rho$ is product and homogeneous, it is invariant under rotation by $\sigma_\omega$.
Hence, we get with the change of variables $\zeta = \sigma_\omega \eta$ and with $|\cW_{N-m}^I| \leq 3^{N-m}$
\begin{align} \notag   
& E_{\nu_\rho} \bigg[ \frac1{3^{N-m}} \sum_{\omega \in \cW_{N-m}^I} W_\omega(\eta) f(\eta) \bigg] \\\notag
  &= \frac1{3^{N-m}} \sum_{\eta \in \Omega_N^I} \sum_{\omega \in \cW_{N-m}^I}  U(\eta_\omega) f(\eta) \nu_\rho(\eta) \\\notag
  &= \frac1{3^{N-m}} \sum_{\zeta \in \Omega_N^I} \sum_{\omega \in \cW_{N-m}^I}  U(\zeta_o) f(\sigma_{-\omega} \zeta) \nu_\rho(\zeta) \\\label{pfxf}
  &\leq E_{\nu_\rho} \big[ U(\eta_o) \ov f(\eta) \big],
  \qquad \ov f(\eta) := \frac1{|\cW_{N-m}^I|} \sum_{\omega \in \cW_{N-m}^I} f(\sigma_{-\omega} \eta),
\end{align}
where $\ov f$ is a density with respect to $\nu_\rho$.
Writing momentarily $\eta = (\eta_o, \eta') \in \Omega_N^I$, we have that integrating over $\eta'$ yields that
\begin{align} \label{pfxo}  
  E_{\nu_\rho} \big[ U(\eta_o) \ov f(\eta) \big]
  = E_{\nu_\rho} \big[ U(\eta_o) \ov f_o(\eta_o) \big],
  \qquad \ov f_o(\eta_o) := \sum_{\eta'} \ov f(\eta_o, \eta') \nu_\rho(\eta'),
\end{align}
where $\nu_\rho$ in the right-hand side is the restriction of $\nu_\rho$ to $V_N^o$, and $\ov f_o$ is a marginal of $\ov f$, which can also be interpreted as a weighted average over the densities $\ov f(\cdot, \eta')$ indexed over $\eta'$. Note that the word $o = \omega_0$ is arbitrarily picked from the dictionary $\cW_{N-m}^I$; it has no particular property over the other words. Recalling the interpretation of $\eta_o$ in \eqref{pfvl}, we observe that the expectation in \eqref{pfxo} equals that in \eqref{pfxr} for the particular choice $\ov f_o$ for the density. Therefore, it remains to show that $\ov f_o$ is admissible for any admissible $f$ in \eqref{pfxx}.
\comm{The configurations $\xi : V_m^0 \to \{0,1\}$ and $\eta_o : V_N^o \to \{0,1\}$ are identical up to a change of variable of the index. We use them interchangeably. KL applies a similar abuse of notation, so this seems acceptable.}
 
Let such $f$ be fixed.  
Since we have already shown that $\ov f_o$ is a density with respect to $\nu_\rho$ on $\Omega_m^0$, it is left to show that 
\begin{equation} \label{pfwo}
  \Gamma_m^0 (\ov f_o) \leq C \frac{2^{N_B} 3^m}{5^N}.
\end{equation}
Our strategy for proving \eqref{pfwo} is to estimate $\Gamma_m^0 (\ov f_o)$ by consecutive (in)equalities to arrive at an expression in terms of $\Gamma_N^I(f)$, for which we have the bound $C 2^{N_B} (\frac35)^N$ in \eqref{pfva}. 

Using the notation $\eta = (\eta_o, \eta') \in \Omega_N^I$ from \eqref{pfxo}, it follows from the convexity of $\Gamma_m^0$ that, writing $\ov \psi^2 := \ov f$ and $x_\omega := \varphi_\omega(x) \in V_N^\omega$ for $x \in V_m^0$,  \comm{[On notation below: alternative to $x_o, y_o$, we can also sum over $x', y' \in V_N^o$. The downside is then especially in the display further below, where the change of variables will become harder to understand]}
\begin{align}   \notag
  \Gamma_m^0 (\ov f_o)
  &\leq \sum_{\eta'} \Gamma_m^0 \big( \ov f(\cdot, \eta') \big) \nu_\rho (\eta') \\ \notag
  &= \frac12 \sum_{\eta'} \sum_{\xi \in \Omega_m^0} \sum_{x \in V_m^0} \sum_{y \sim x} \big[ \ov \psi \big( \xi^{x y}, \eta' \big) - \ov \psi(\xi, \eta') \big]^2 \nu_\rho(\xi) \nu_\rho (\eta') \\ \notag
  &= \frac12 \sum_{\eta'} \sum_{\eta_o \in \{0,1\}^{V_N^o} } \sum_{x \in V_m^0} \sum_{y \sim x} \big[ \ov \psi \big( (\eta_o)^{x_o y_o}, \eta' \big) - \ov \psi(\eta_o, \eta') \big]^2 \nu_\rho(\eta_o) \nu_\rho (\eta') \\\label{pfvj}
  &= \frac12 \sum_{\eta \in \Omega_N^I} \sum_{x \in V_m^0} \sum_{y \sim x} \big[ \ov \psi \big( \eta^{x_o y_o} \big) - \ov \psi(\eta) \big]^2 \nu_\rho(\eta)
  =: \Gamma_N^o (\ov f), 
\end{align}  
where $y \sim x$ is understood on $V_m^0$. 
Note that $\Gamma_N^o$ is the part of $\Gamma_N^I$ (recall \eqref{pfva}) that only accounts for neighboring sites in $V_N^o \subset V_N^I$. 

To proceed, we need the identity
\begin{equation} \label{pfvk}
  \sigma_{-\omega} \big( \eta^{x_o y_o} \big) = (\sigma_{-\omega} \eta)^{x_\omega y_\omega}
\end{equation}
for all $\omega \in \cW_{N-m}^I$ and all $x,y \in V_m^0$. While \eqref{pfvk} follows directly from the definitions, it requires some setup to verify it, and thus we provide the details. Note that for $\eta \in \Omega_N^I$, words $\omega, \omega'  \in \cW_{N-m}^I$ and $x,y,z \in V_m^0$:
\begin{itemize}
  \item $(\sigma_{-\omega} \eta)(z_{\omega'}) = \eta(z_{\omega' - \omega})$;
  \item $\displaystyle \eta^{x_\omega y_\omega}(z_{\omega'}) = \begin{cases}
    \eta(z_{\omega'})
    &\text{if } z_{\omega'} \notin \{ x_\omega,  y_\omega \} \\
    1- \eta(z_{\omega'})
    &\text{if } z_{\omega'} \in \{ x_\omega,  y_\omega \}.
  \end{cases}$
\end{itemize} 
Note that $z_{\omega'} \in \{ x_\omega,  y_\omega \} \iff \omega' = \omega$ and $z \in \{x,y\}$.  Assume $z_{\omega'} \notin \{ x_\omega,  y_\omega \}$.
Then, for the left-hand side in \eqref{pfvk}, take $\zeta := \eta^{x_o y_o}$ and expand
\[
  \big(\sigma_{-\omega} \big( \eta^{x_o y_o} \big)\big)(z_{\omega'})
  = (\sigma_{-\omega} \zeta) (z_{\omega'})
  = \zeta(z_{\omega' - \omega})
  = \eta(z_{\omega' - \omega}).
\]
For the right-hand side, take $\xi := \sigma_{-\omega} \eta$ and expand
\[
  (\sigma_{-\omega} \eta)^{x_\omega y_\omega} (z_{\omega'})
  = \xi^{x_\omega y_\omega} (z_{\omega'})
  = \xi (z_{\omega'})
  = \eta(z_{\omega' - \omega}).
\]
Hence, \eqref{pfvk} holds when $z_{\omega'} \notin \{ x_\omega,  y_\omega \}$. The proof is analogous in the case $z_{\omega'} \in \{ x_\omega,  y_\omega \}$.

We continue with the proof of \eqref{pfwo}.  
Using the convexity of $\Gamma_N^o$ and writing $(\sigma_\omega f) (\eta) := f(\sigma_\omega \eta)$ and $\psi^2 := f$, we continue the estimate above as
\begin{align} \notag
  \Gamma_N^o (\ov f) 
  &\leq  \frac1{|\cW_{N-m}^I|} \sum_{\omega \in \cW_{N-m}^I} \Gamma_N^o ( \sigma_{-\omega} f ) \\\notag
  &\leq  \frac2{3^{N-m}} \sum_{\omega \in \cW_{N-m}^I} \Gamma_N^o ( \sigma_{-\omega} f ) \\\notag
  &= \frac1{3^{N-m}}  \sum_{\eta \in \Omega_N^I} \sum_{\omega \in \cW_{N-m}^I} \sum_{x \in V_m^0} \sum_{y \sim x} \big[ \psi \big( \sigma_{-\omega} \big( \eta^{x_o y_o} \big) \big) - \psi(\sigma_{-\omega} \eta) \big]^2 \nu_\rho(\eta) \\\notag
  &= \frac1{3^{N-m}}  \sum_{\eta \in \Omega_N^I} \sum_{\omega \in \cW_{N-m}^I} \sum_{x \in V_m^0} \sum_{y \sim x} \big[ \psi \big( (\sigma_{-\omega} \eta)^{x_\omega y_\omega} \big) - \psi(\sigma_{-\omega} \eta) \big]^2 \nu_\rho(\eta) \\\label{pfvi} 
  &= \frac1{3^{N-m}}  \sum_{\zeta \in \Omega_N^I} \sum_{\omega \in \cW_{N-m}^I} \sum_{x \in V_m^0} \sum_{y \sim x} \big[ \psi \big( \zeta^{x_\omega y_\omega} \big) - \psi(\zeta) \big]^2 \nu_\rho(\zeta). 
\end{align}
Note that with $x' := x_\omega$ and $y' := y_\omega$ the triple sum over $\omega,x,y$ can be written as the double sum over $x' \in V_N^I \setminus V_{N-m}$ and $y' \in V_N^I \setminus V_{N-m}$ with $x' \sim y'$ on $V_N^I$. Then, the expression above equals $2 \cdot 3^{m-N} \Gamma_N^I(f)$ (recall \eqref{pfva}) except for some missing nonnegative contributions around the sites in $V_{N-m}^0$. Hence
\begin{equation*}
  \Gamma_m^0 (\ov f_o)
  \leq 2 \cdot 3^{m-N} \Gamma_N^I(f)
  \leq C \frac{2^{N_B} 3^m}{5^N}.
\end{equation*}
This completes the proof for the claim in Step 1.
\medskip

\noindent
\textbf{Step 2}: Taking $N \to \infty$. \eqref{pfxr} holds if
\begin{equation} \label{pfxl} 
  \limsup_{m\to\infty} \sup_{\Gamma_m^0(f) = 0}
  E_{\nu_\rho} [ U(\xi) f (\xi) ]
  = 0,
\end{equation}
where $f$ is a density with respect to $\nu_\rho$. 
\medskip

The proof starts with some observations. Since $\nu_\rho$ in \eqref{pfxr} is strictly positive as a measure on $\Omega_m^0$, and since $V_m^0$ is a finite set, the set $\cK_N$ of all densities of $f$ is compact. Moreover,
since $\nu_\rho$ in \eqref{pfxr} is independent of $N$,
$\cK_N \subset \cK$, where $\cK$ is a finite dimensional compact set.
Finally, both $\Gamma_m^0$ and the expectation  in \eqref{pfxr} are continuous as a function of $f$. 

From these three observations, we obtain that for each $N$ there exists a maximizer $f_N$. In addition, $(f_N)_N$ has at least one limit point. Let $f$ be any such point, and take $N_k$ such that $f_{N_k} \to f$. Then, $f$ is still a density with respect to $\nu_\rho$, $E_{\nu_\rho} [ U(\xi) f_{N_k} (\xi) ] \to E_{\nu_\rho} [ U(\xi) f (\xi) ]$ and $C 2^{N_B} 3^m 5^{-N_k}
\geq \Gamma_m^0(f_{N_k}) \to \Gamma_m^0(f)$ as $k \to \infty$, and thus $\Gamma_m^0(f) = 0$. From these results, it follows that the left-hand side in \eqref{pfxr} is bounded from above by the left-hand side in \eqref{pfxl}. The reason that $N_B$ has vanished from the limsup is that \eqref{pfxl} is independent of $N_B$.
\medskip

\noindent
\textbf{Step 3}: Fixing the total number of particles. \eqref{pfxl} holds if
\begin{equation} \label{pfxk}
  \limsup_{m\to\infty} \max_{0 \leq j \leq |V_m^0|}
  E_{\nu^j} \bigg[ \bigg| \frac1{|V_m^0|} \sum_{y \in V_m^{L_0}} c_y^m(\xi)(1 - 2\xi_y) - \Phi ( \ov \xi ) \bigg| \bigg]
  = 0,
\end{equation}
where $\nu^j$ is the restriction of $\nu_\rho$ to $\Omega_m^{0,j} := \{ \xi \in \Omega_m^0 : \ov \xi =  j|V_m^0|^{-1} \}$, i.e.\ the set of all configurations with $j$ particles. 
\medskip

To prove this, we observe from $\Gamma_m^0(f) = 0$ that $f$ is constant on $\Omega_m^{0,j}$ for each $j$. Hence, by conditioning $\nu_\rho$ on $j$ in \eqref{pfxl}, we can replace $f(\xi)$ by a constant $C_\rho$. Then, the supremum over $f$ reduces to the maximum over $j$. 
\medskip

\noindent
\textbf{Step 4}: Equivalence of ensembles. \eqref{pfxk} holds.

\medskip
To prove this, we fix $0 \leq j \leq |V_m^0|$. It is easy to check that $\nu^j$ is exchangeable. Then, by de Finetti's theorem, there exists $\lambda_j \in \cP([0,1])$ such that $\nu^j = \int \nu_\alpha \lambda_j(d\alpha)$. Thus, \eqref{pfxk} follows if \comm{[This paragraph is from Liggett85 p365 top. Own notes p.121 middle.]}
\begin{equation} \label{pfvg}
  \limsup_{m\to\infty} \max_{0 \leq \alpha \leq 1}
  E_{\nu_\alpha} \bigg[ \bigg| \frac1{|V_m^0|} \sum_{y \in V_m^{L_0}} c_y^m(\xi)(1 - 2\xi_y) - \Phi ( \ov \xi ) \bigg| \bigg]
  = 0.
\end{equation}

Next, we apply three consecutive replacements to the term $\Phi ( \ov \xi )$ in \eqref{pfvg}. First, it follows from a standard, direct computation that $E_{\nu_\alpha}[|\ov \xi - \alpha|^2] \leq |V_m^0|^{-1}$ for all $\alpha \in [0,1]$. Hence, using the fact that $\Phi$ is Lipschitz, we may replace $\Phi ( \ov \xi )$ by $\Phi ( \alpha )$. Second, recall from \eqref{Phi} that
\begin{align*}
   \Phi(\alpha)
   = \sum_{\Lambda \in \cS} \Big( \lim_{N \to \infty} r_\Lambda^N  \Big) E_{\nu_\alpha} \Big[(1 - 2\eta_0)  c(\eta; \Lambda) \Big],
\end{align*}  
where $\eta \in \{0,1\}^\Lambda$.
Since $|\cS| < \infty$ and since $\cS$ and $r_\Lambda^N$ are independent of $m,\alpha$, we may replace $r_\Lambda^N$ by $r_\Lambda^m$ for all $\Lambda \in \cS$. Thus, we may replace $\Phi(\alpha)$ by
\begin{equation*}
  \sum_{\Lambda \in \cS} r_\Lambda^m E_{\nu_\alpha} [c(\eta; \Lambda) (1 - 2\eta_0)  ]
  = \frac1{|V_m^0|} \sum_{y \in V_m^0}  E_{\nu_\alpha} [c_y^m(\xi)(1 - 2\xi_y)].
\end{equation*}
Third, since the contribution to this sum of $y$ over $V_m^0 \setminus V_m^{L_0}$ is negligible uniformly in $\alpha$, we may reduce the sum to $y$ over $V_m^{L_0}$. 
In conclusion, \eqref{pfvg} follows if 
\begin{equation} \label{pftu}
  \limsup_{m\to\infty} \max_{0 \leq \alpha \leq 1}
  E_{\nu_\alpha} \bigg[ \bigg| \frac1{|V_m^0|} \sum_{y \in V_m^{L_0}} ( \Xi_y - E_{\nu_\alpha}[\Xi_y] ) \bigg| \bigg]
  = 0,
  \qquad \Xi_y := c_y^m(\xi)(1 - 2\xi_y).
\end{equation}

Note that $\{ \Xi_y \}_{y \in V_m^{L_0}}$ is under $\nu_\alpha$ dependent due to the local dependence of $c_y^m(\xi)$ on $\xi$. To untangle these dependencies,
we divide $V_m^{L_0}$ into the smaller `triangles' $\{ V_m^{L_0,w} \}_{|w| = m-k}$ and the remaining set of sites $V_*$ (recall \eqref{pfut}), where we will take $\limsup_k$ on both sides in \eqref{pftu}. We decompose the sum in \eqref{pftu} into the sum over $y \in V_*$ and the double sum over $z \in V_k^{L_0}$ and $|w| = m-k$ (with $y = \varphi_w(z) =: z_w$). Since $|V_*| \leq C 3^{m-k}$, the sum over $y \in V_*$ has vanishing contribution uniformly in $\alpha$, and thus we may neglect it. Hence, \eqref{pftu} follows if
\begin{equation} \label{pfvf}
  \limsup_{k,m} \max_{0 \leq \alpha \leq 1}
  \frac1{|V_k^{L_0}|} \sum_{z \in V_k^{L_0}} E_{\nu_\alpha} \bigg[ \bigg| \frac1{3^{m-k}} \sum_{|w| = m-k} ( \Xi_{z_w} - E_{\nu_\alpha}[\Xi_{z_w}] ) \bigg| \bigg]
  = 0. 
\end{equation}
Note that $\Xi_{z_w}$ as indexed by $w$ are i.i.d.\ under $\nu_\alpha$.  Then, a standard computation yields (recall from \eqref{cinf} that $|\Xi_{z_w}| \leq \|c\|_\infty$)
\begin{equation*}
  E_{\nu_\alpha} \bigg[ \bigg| \frac1{3^{m-k}} \sum_{|w| = m-k} ( \Xi_{z_w} - E_{\nu_\alpha}[\Xi_{z_w}] ) \bigg|^2 \bigg]
  \leq \frac{ \| c \|_\infty^2 }{3^{m-k}}.
\end{equation*}
This implies \eqref{pfvf}. This completes the proof of the claim in Step 4.

\subsection{Proof of the 2-blocks estimate \eqref{pfxw}}

In the next step, we work on the set $V_m^0 \times \{1,2\}$ consisting of two copies of $V_m^0$. We write $(x,i) \in V_m^0 \times \{1,2\}$ as $x_i$, where $i \in \{1,2\}$ indicates the copy. For $x, y \in V_m^0$, $i \in \{1,2\}$ and configurations $\zeta : V_m^0 \times \{1,2\} \to \{0,1\}$, we have, in accordance with \eqref{eta:xy},
\begin{equation*}
  \zeta^{y_1 y_2}(x_i)
  = \left\{ \begin{aligned}
     &\zeta(y_2)
     &&\text{if } x_i=y_1 \\
     &\zeta(y_1)
     &&\text{if } x_i=y_2 \\
     &\zeta(x_i)
     &&\text{otherwise.}
   \end{aligned} \right.
\end{equation*}
Other than this formula, we can and will use lighter notation. We will write $x$ instead of $x_1$, $x'$ instead of $x_2$, and identify $\zeta$ with $(\xi, \xi')$ for $\xi, \xi' \in \Omega_m^0$. We write $\zeta^{y_1 y_2}$ as $(\xi, \xi')^{yy'}$. 
\medskip

\noindent 
\textbf{Step 1}: Reduction to two copies of $V_m^0$. Let $b \in V_m^0$. \eqref{pfxw} holds if 
\begin{equation} \label{pfxt}
  \limsup_{N_B, m, M, N} \sup_f E_{\nu_\rho^2} \big[ f(\xi, \xi') |\ov \xi - \ov \xi'| \big] = 0,
\end{equation}
where $\xi, \xi' \in \Omega_m^0$, $\nu_\rho^2$ is the product measure on $\Omega_m^0 \times \Omega_m^0$, and the supremum is taken over all densities $f:\Omega_m^0\times\Omega_m^0\to[0,\infty)$ with respect to $\nu_\rho^2$ such that $\Gamma_m^{2,b} (f) \leq C 2^{N_B} (\frac35)^{M}$, where  
\begin{align*} 
  \Gamma_m^{2,b} &:= \Gamma_m^2 + \Gamma_m^{2'} + \Gamma_m^b, \\ 
  \Gamma_m^2 (\psi^2)
  &:= \frac12 E_{\nu_\rho^2} \bigg[ \sum_{y \in V_m^0} \sum_{ z \sim y } 
         [\psi(\xi^{yz}, \xi') - \psi(\xi, \xi')]^2 \bigg],  \\
  \Gamma_m^{2'} (\psi^2) &:= \frac12 E_{\nu_\rho^2} \bigg[ \sum_{y \in V_m^0} \sum_{ z \sim y } 
         [\psi(\xi, (\xi')^{yz}) - \psi(\xi, \xi')]^2 \bigg], \\  
  \Gamma_m^b (\psi^2) &:=  \frac12 E_{\nu_\rho^2} \Big[
         [\psi((\xi, \xi')^{bb'}) - \psi(\xi, \xi')]^2   \Big].         
\end{align*} 
Note that  $\Gamma_m^2$ and $\Gamma_m^{2'}$ resemble $\Gamma_m^0$ from \eqref{pfxr}; the corresponding particle systems are the same, but $\Gamma_m^2$ describes in addition that the particles described by $\xi'$ do not move. Similarly, $\Gamma_m^{2'}$ keeps the particles described by $\xi$ fixed.
The sole purpose of the additional term $\Gamma_m^b$ is to have the property that $\Gamma_m^{2,b}(f) = 0$ implies that $f$ is constant on level sets of a fixed number of particles in $\Omega_m^0 \times \Omega_m^0$. $\Gamma_m^b$ is the carr\'e du champ operator of Kawasaki dynamics at site $b$ from either of the two copies of $V_m^0$ to the other.
\medskip
 
The proof of the claim in Step 1 is similar to that of Step 1 of the 1-block estimate in Section \ref{s:repl:1block}. On the one hand it is easier because the function $W_w(\eta)$ is now given by the simpler expression $|\ov \eta_{wv} - \ov \eta_{wv'}|$, but on the other hand it is a more delicate version because (a) we need to work with both large and small triangles, and (b) we need to handle two particle configurations $\xi$ and $\xi'$ that are connected by the new operator $\Gamma_m^b$. We are using the same notation from the proof of \eqref{pfxr} unless mentioned otherwise. 

Regarding (a), we decompose the index shift $\sigma_\omega$ into a shift over large triangles and a shift over small triangles. We do this by decomposing $\omega = wv$ as done above Step 4 in Section \ref{s:repl:to12block}, i.e.\ $w$ is always a word of length $M$ and $v$ is always a word of length $k := N - M - m$. For $w$, we use the restricted dictionary 
\[ 
  \cW_M^I = \{w_0, w_1, \ldots, w_{J-1}\}, \qquad J := |\cW_M^I| = 3^M - 3^{M-N_B+1} - 1,
\]
and for $v$ we use the complete dictionary
\[
  \cW_k := \{ v : |v| = k \}= \{v_0, v_1, \ldots, v_{K-1}\}, \qquad K := |\cW_k| = 3^k.
\]
Analogous to the addition of words defined below \eqref{pfuy}, we define the addition of $w, w' \in \cW_M^I$ and the addition of $v,v \in \cW_k$ based on the labelling in the displays above.
For all words $w \in \cW_M^I$ and all $v,v' \in \cW_k$, the shift over large triangles is given by 
\begin{align*}
  (\sigma_{w'o} \eta)_{wv} := \eta_{(w + w')v},
  \qquad (\sigma_{w'o} \eta)_* := \eta_*, \qquad o := v_0,
\end{align*}
where we recall that $\eta_* = \eta |_{V_{N-m}^0}$,
and the shift over small triangles is given by 
\comm{[
The reason for the symbol 'c' below is that it looks as a letter closest to `$o$']
}
\begin{align*}
  (\sigma_{c v'} \eta)_{wv} := \eta_{w(v + v')},
  \qquad (\sigma_{c v'} \eta)_* := \eta_*, \qquad c := w_0.
\end{align*} 
It is easy to check that $\sigma_{w o} \sigma_{c v} = \sigma_\omega = \sigma_{c v} \sigma_{w o}$  with $\omega = wv$. 
 
Consider the expectation in \eqref{pfxw}.
Note that $\nu_\rho$ is invariant under rotation by both $\sigma_{wo}$ and $\sigma_{c v}$.  
First substituting $\zeta = \sigma_{wo} \eta$ and then $\eta = \sigma_{c v} \zeta$ yields that the expectation in \eqref{pfxw} equals
\begin{align*}
  &\sum_{\zeta \in \Omega_N^I} \frac1{3^M} \frac1{3^{2k}} \sum_{w \in \cW_M^I} \sum_{v,v' \in \cW_k} f(\sigma_{(-w)o} \zeta) |\ov \zeta_{cv} - \ov \zeta_{cv'}| \nu_\rho(\zeta) \\
  &= \sum_{\eta \in \Omega_N^I} \frac1{3^M} \frac1{3^{2k}} \sum_{w \in \cW_M^I} \sum_{v,v' \in \cW_k} f(\sigma_{(-w)(-v)} \eta) |\ov \eta_{co} - \ov \eta_{c(v'-v)}| \nu_\rho(\eta).
\end{align*}
By shifting the sum over $v'$ to $v' - v$, the dependence of the summand on $w,v,v'$ decouples, and we obtain that the above display equals
\begin{align*} 
  \frac{|\cW_M^I|}{3^M} \sum_{\eta \in \Omega_N^I} \frac1{3^k} \sum_{v' \in \cW_k} \ov f(\eta) |\ov \eta_{co} - \ov \eta_{cv'}| \nu_\rho(\eta),
  \qquad \ov f(\eta) := \frac1{3^k |\cW_M^I|} \sum_{w \in \cW_M^I} \sum_{v \in \cW_k} f(\sigma_{(-w)(-v)} \eta).
\end{align*}
Note that the density $\ov f$ is the same as that in \eqref{pfxf}.
Clearly, the left expression above is nonnegative. We bound it from above by applying $|\cW_M^I| \leq 3^M$ and by moving the sum over $v'$ to the left and then taking the maximum over $v'$. Let $u \in \cW_k$ be a maximizer. Since $v'=o$ is a minimizer, we may assume $u \neq o$. Then, the expectation in \eqref{pfxw} is bounded from above by
\begin{align*} 
  \sum_{\eta \in \Omega_N^I} \ov f(\eta) |\ov \eta_{co} - \ov \eta_{cu}| \nu_\rho(\eta)
  = E_{\nu_\rho^2} \big[ \ov f_u(\eta_{co}, \eta_{cu}) |\ov \eta_{co} - \ov \eta_{cu}| \big],
\end{align*}
where
\begin{equation*}
  \ov f_u(\eta_{co}, \eta_{cu}) := \sum_{\tilde \eta} \ov f(\eta_{co}, \eta_{cu}, \tilde \eta) \nu_\rho(\tilde \eta)
\end{equation*}
and $\eta$ is decomposed as $\eta = (\eta_{co}, \eta_{cu}, \tilde \eta) \in \Omega_N^I$. Thus, $\ov f_u$ is a marginal of $\ov f$, which is therefore a density with respect to $\nu_\rho^2$. Similar to the proof of the 1-block estimate, we recognize this expression as the desired expectation in \eqref{pfxt}. 

It is left to show that $\Gamma_m^{2,b} (\ov f_u) \leq C 2^{N_B} ( \frac35 )^M$. We prove this by showing that
\begin{align} \label{pfwd}
  \Gamma_m^2 (\ov f_u) 
  &\leq C 2^{N_B} 3^m 5^{-N}, \\\label{pfwc}
  \Gamma_m^{2'} (\ov f_u) 
  &\leq C 2^{N_B} 3^m 5^{-N}, \\\label{pfwb}
  \Gamma_m^b (\ov f_u) 
  &\leq C 2^{N_B} \Big( \frac35 \Big)^M,
\end{align}
which is sufficient since $m < M < N$.

The proof of \eqref{pfwd} is similar to that in the 1-block estimate; we focus on the small differences.
Again, by the convexity of carr\'e du champ operators, we get, writing $\ov \psi^2 := \ov f_u$ and recalling the extension of $\eta_\omega : V_N^\omega \to \{0,1\}$ to $V_m^0$, 
\begin{align*}
  \Gamma_m^2 (\ov f_u)
  &\leq \sum_{\tilde \eta}  \Gamma_m^2 \big( \ov f(\cdot, \cdot, \tilde \eta) \big) \nu_\rho(\tilde \eta) \\
  &= \frac12 \sum_{\tilde \eta} \sum_{\xi, \xi' \in \Omega_m^0} \sum_{x \in V_m^0} \sum_{ y \sim x } \big[ \ov \psi(\xi^{xy}, \xi', \tilde \eta) - \ov \psi(\xi, \xi', \tilde \eta) \big]^2 \nu_\rho^2(\xi, \xi') \nu_\rho(\tilde \eta) \\
  &= \frac12 \sum_{ \eta \in \Omega_N^I } \sum_{x \in V_m^0} \sum_{ y \sim x } \big[ \ov \psi(\eta^{x_{co}y_{co}}) - \ov \psi(\eta) \big]^2 \nu_\rho( \eta)
  =: \Gamma_N^{co}(\ov f),
\end{align*} 
where we recall that $x_\omega := \varphi_\omega(x) \in V_N^0$. Note that $\Gamma_N^{co}$ is the same as $\Gamma_N^o$ in \eqref{pfvj}. Since also $\ov f$ is the same, we obtain \eqref{pfwd} from the same estimates in \eqref{pfvi}.  

The proof of \eqref{pfwc} is similar; we omit its proof.
The proof of \eqref{pfwb} relies crucially on the moving particle lemma. 
First, similar to $\Gamma_m^2$, we obtain
\begin{align} \notag 
  \Gamma_m^b (\ov f_u)
  &\leq \sum_{\tilde \eta}  \Gamma_m^b \big( \ov f(\cdot, \cdot, \tilde \eta) \big) \nu_\rho(\tilde \eta) \\\notag
  &= \frac12 \sum_{\tilde \eta} \sum_{\xi, \xi' \in \Omega_m^0} \big[ \ov \psi \big( (\xi, \xi')^{bb'}, \tilde \eta) - \ov \psi ( \xi, \xi', \tilde \eta ) \big]^2 \nu_\rho^2(\xi, \xi') \nu_\rho(\tilde \eta) \\\notag
  &= \frac12 \sum_{ \eta \in \Omega_N^I } \big[ \ov \psi(\eta^{b_{co}b_{cu}}) - \ov \psi(\eta) \big]^2 \nu_\rho( \eta)
  =: \Gamma_m^{b,u}(\ov f),
\end{align}  
where $\Gamma_m^{b,u}$ is the Dirichlet form on functions on $\Omega_N^I$ of the Kawasaki dynamics where particles jump only between the sites $b_{co}$ and $b_{cu}$ on $V_N^I$. Note, however, that $b_{co} \nsim b_{cu}$, i.e.\ they are not connected by a single edge. From a similar computation as in \eqref{pfvi}, we get
\begin{align} \notag
  \Gamma_m^{b,u}(\ov f) 
  &\leq \frac1{3^k |\cW_M^I|} \sum_{w \in \cW_M^I} \sum_{v \in \cW_k} \Gamma_m^{b,u} ( \sigma_{(-w)(-v)} f ) \\\notag
  &\leq \frac2{3^{N-m}} \sum_{w \in \cW_M^I} \sum_{v \in \cW_k} \Gamma_m^{b,u} ( \sigma_{(-w)(-v)} f ) \\\notag
  &= \frac1{3^{N-m}}  \sum_{\eta \in \Omega_N^I} \sum_{w \in \cW_M^I} \sum_{v \in \cW_k} \big[ \psi \big( \eta^{b_{wv} b_{w(v+u)}} \big) - \psi(\eta) \big]^2 \nu_\rho(\eta). 
\end{align}
We continue by applying the moving particle lemma (see Lemma \ref{l:mov:part}).
Together with the bound \eqref{pfux} (which applies since $b_{wv}, b_{w(v+u)} \in V_N^w$ for each $w \in \cW_M^I$ and each $v \in \cW_k$), we obtain
\begin{equation*}
  \Gamma_m^{b,u}(\ov f) 
  \leq \frac C{3^{N-m}} \Big( \frac53 \Big)^{N-M} \sum_{w \in \cW_M^I} \sum_{v \in \cW_k} \Gamma_N^I(f)
  \leq C' \Big( \frac53 \Big)^{N-M} 2^{N_B} \Big( \frac35 \Big)^N
  = C' 2^{N_B} \Big( \frac35 \Big)^M.
\end{equation*}
This completes the proof of \eqref{pfwb}, and therefore that of the claim in Step 1.
\medskip

The remaining three steps are similar to the 1-block estimate; we only focus on the modifications of the proofs.
\medskip

\noindent
\textbf{Step 2}: Taking $M,N \to \infty$. \eqref{pfxt} holds if
\begin{equation} \label{pfwr} 
  \limsup_{m\to\infty} \sup_{\Gamma_m^{2,b}(f) = 0} E_{\nu_\rho^2} \big[ f(\xi, \xi') |\ov \xi - \ov \xi'| \big]
  = 0,
\end{equation}
where $f$ is a density with respect to $\nu_\rho^2$. 
\medskip

To prove this, we apply the proof of Step 2 of the 1-block estimate twice: first for $N$, which yields that \eqref{pfxt} holds if 
\begin{equation} \label{pfvh}
  \limsup_{N_B,m,M} \sup_{\Gamma_m^{2,b}(f) \leq C 2^{N_B} (3/5)^M} E_{\nu_\rho^2} \big[ f(\xi, \xi') |\ov \xi - \ov \xi'| \big] 
  = 0, 
\end{equation}
and second for $M$, which yields that \eqref{pfvh} implies \eqref{pfwr}.
\medskip

\noindent
\textbf{Step 3}: Fixing the total number of particles. \eqref{pfwr} holds if
\begin{equation} \label{pfwq}
  \limsup_{m\to\infty} \max_{0 \leq j \leq 2 |V_m^0|}  
  E_{\nu^j} \big[ \big| \ov \eta - \ov \eta' \big| \big]
  = 0,
\end{equation}
where $\nu^j$ is the restriction of $\nu_\rho^2$ to 
\[
  \Omega_m^{0,2,j} := \Big\{ (\xi, \xi') \in \Omega_m^0 \times \Omega_m^0 : \sum_{x \in V_m^0} (\xi_x + \xi_x') = j \Big\}.
\]

To prove this, we note that $\Gamma_m^{2,b}(f) = 0$ means again -- but now because of the additional term $\Gamma_m^b$ -- that $f$ is constant on $\Omega_m^{0,2,j}$ for each $j$. The remainder of the proof is the same as in Step 3 of the 1-block estimate.
\medskip

\noindent
\textbf{Step 4}: Equivalence of ensembles. \eqref{pfwq} holds.
\medskip

To prove this, a few steps from the proof of Step 4 in the 1-block estimate are sufficient. Indeed, after replacing $\nu^j$ by $\nu_\alpha$ as done in \eqref{pfvg}, we replace both $\ov \eta$ and $\ov \eta'$ by $\alpha$.

\section{The hydrodynamic limit for $b \geq \frac53$}
\label{s:b}

In this section, we establish the counterpart of the hydrodynamic limit (Theorem \ref{t}) for the cases $b = \frac53$ and $b > \frac53$. The main result is stated in Theorem \ref{t:R} below.

\subsection{The reaction--diffusion equation for $b \geq \frac53$}
\label{s:HDL:Eqn:HDL:RBC}

The reaction--diffusion equation for $b = \frac53$ is given  by
\begin{equation} \label{HDL:R}
  \left\{ \begin{aligned}
    \partial_t \rho(t,x) &= \frac23 \Delta \rho(t,x) + \Phi(\rho(t,x))
    &&t \in (0,T], \ x \in K \setminus V_0  \\
    \partial^\perp \rho(t, a) &= -r(a) ( \rho(t, a) - \rho_B(a) )
    &&t \in [0,T], \ a \in V_0 \\
    \rho(0, x) &= \rho_\circ(x)
    &&x \in K,
  \end{aligned} \right.
\end{equation}
where, in addition to the data $T, \rho_B, \rho_\circ, \Phi$, a new vector $r : V_0 \to [0,\infty)$ is also given. The only difference with the equation \eqref{HDL} for $b < \frac53$ is that the Dirichlet boundary condition is replaced by a Robin boundary condition. 

The equation for $b > \frac53$ is given by \eqref{HDL:R} with the special choice $r = 0$. This turns the boundary condition into a homogeneous Neumann condition. Its definition of weak solutions and the uniqueness thereof are therefore covered by the treatment of the case $b = \frac53$. 

The definition of weak solutions (Definition \ref{d:wSol}) changes to:

\begin{defn}[Weak solution for $b \geq \frac53$] \label{d:wSol:R}
A measurable function $\rho:[0,T]\times K\to\R$ is a weak solution of \eqref{HDL:R} if
\begin{enumerate}
  \item $\rho \in L^2(\cF)$, and
  \item For all $t \in (0,T)$ and all $F \in \cC_T := C^1((0,T); \cD(\Delta)) \cap C([0,T]; \cD(\Delta))$ 
  \begin{multline} \label{weak-form:R}
    0
    = \Theta_t^r(\rho, F) 
    := \int_K \rho_t F_t \, dm 
      - \int_K \rho_\circ F_0 \, dm
      - \int_0^t \int_K \rho_s \Big( \frac23 \Delta + \partial_s \Big) F_s \, dm ds 
      \\
      - \int_0^t \int_K \Phi(\rho_s) F_s \, dm ds
      + \frac23 \int_0^t \sum_{a \in V_0} \big( \partial^\perp F_s(a) \rho_B(a) + r(a) (\rho_s(a) - \rho_B(a)) F_s(a) \big) \, ds.
  \end{multline}
\end{enumerate}
\end{defn}

The differences with Definition \ref{d:wSol} for $b < \frac53$ are as follows:
\begin{itemize}
  \item no explicit condition on $\rho_t |_{V_0}$ is given. Instead, $\rho_t(a)$ appears in an additional term in \eqref{weak-form:R}.
  \item the space of the test functions $\cC_T$ is extended to those that need not be $0$ at $V_0$.
\end{itemize}

\begin{thm} \label{t:HDL:R:un}
Weak solutions of \eqref{HDL:R} are unique for any $T, \rho_B, \rho_\circ, \Phi, r$ in the setting above (recall Section \ref{s:HDL:Eqn:HDL}). 
\end{thm}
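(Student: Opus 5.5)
Let $\rho^0,\rho^1$ be two weak solutions in the sense of Definition \ref{d:wSol:R} and set $\rho := \rho^0 - \rho^1 \in L^2(\cF)$. I will mirror the proof of Theorem \ref{t:HDL:un}. Subtracting the two weak forms \eqref{weak-form:R}, the data $\rho_\circ$ and $\rho_B$ cancel. Since $F_s \in \cD(\Delta)$ is no longer required to vanish on $V_0$, I rewrite the Laplacian term using the Gauss--Green formula below \eqref{delNp:to:delp}:
\[
-\int_K \rho_s \Delta F_s\, dm = \cE(F_s,\rho_s) - \sum_{a\in V_0}\partial^\perp F_s(a)\,\rho_s(a).
\]
This gives, for all $F \in \cC_T$,
\begin{multline*}
0 = \int_K \rho_t F_t\, dm - \int_0^t\!\int_K \rho_s \partial_s F_s \, dm ds + \frac23\int_0^t \cE(\rho_s,F_s)\,ds \\
- \frac23 \int_0^t \sum_{a}\partial^\perp F_s(a)\rho_s(a)\,ds + \frac23\int_0^t\sum_a r(a)\rho_s(a)F_s(a)\,ds - \int_0^t\!\int_K (\Phi(\rho^0_s)-\Phi(\rho^1_s))F_s\,dm ds.
\end{multline*}
The term with $\partial^\perp F_s(a)\rho_s(a)$ is the awkward one. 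It does not vanish, because $\rho_s$ has free boundary values. I first check whether it is already cancelled in \eqref{weak-form:R}. The term $\frac23\partial^\perp F_s(a)\rho_B(a)$ there suggests that the intended form contains $\partial^\perp F_s(a)\rho_s(a)$, which would cancel after the Gauss--Green substitution and leave the clean energy identity. If instead it does not cancel, I keep it and handle it by density.

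Next, fix $t\in(0,T]$ and use the test function $F_n(s,x) := \int_s^t \rho^n_r(x)\,dr$, where $\rho^n \in C^1([0,t];\cD(\Delta))$ approximates $\rho$ in $L^2(0,t;\cF)$. Such approximants exist because $\cD(\Delta)$ is dense in $\cF$ for the $\cE_1$-norm. Now $F_n(t)=0$ and $\partial_s F_n = -\rho^n_s$. Letting $n\to\infty$, the bulk terms converge exactly as in Theorem \ref{t:HDL:un}:
\[
-\int_0^t\!\int_K \rho_s\partial_sF_n\,dm\,ds \to R(t), \qquad \frac23\int_0^t\cE(\rho_s,F_n(s))\,ds \to \frac13\,\cE\Big(\int_0^t\rho_s\,ds\Big).
\]
The Robin term converges by continuity of point evaluation, which follows from $\cF\subset C(K)$ and the bound $|g(a)|^2 \le C\,\cE_1(g)$. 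It tends to $\frac23\sum_a r(a)\int_0^t \rho_s(a)\int_s^t\rho_r(a)\,dr\,ds = \frac13\sum_a r(a)\big(\int_0^t\rho_s(a)\,ds\big)^2 \ge 0$, and this is nonnegative precisely because $r\ge0$. The reaction term is bounded by $tL_\Phi R(t)$, exactly as before.

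The main obstacle is the normal derivative term $\partial^\perp F_n(a)$. It involves $\partial^\perp\rho^n$, which is not controlled by the $\cF$-norm. My first attempt is to choose the approximants $\rho^n$ with an additional constraint: pick $\rho^n_s \in \cD(\Delta)$ satisfying the Robin condition $\partial^\perp\rho^n_s(a) = -r(a)\rho^n_s(a)$, or, if the term does cancel as discussed above, simply $\partial^\perp\rho^n_s(a)=0$. Functions with such prescribed normal derivatives are still dense in $\cF$. This can be seen by adding small multiples of fixed functions in $\cD(\Delta)$ whose normal derivatives at $V_0$ are independent and prescribed, with uniformly small $\cE_1$-norm, for example harmonic-type functions localized near $a$. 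With this choice the boundary terms combine into the nonnegative quadratic expression above, and we arrive at
\[
R(t) + \frac13\cE\Big(\int_0^t\rho\Big) + \frac13\sum_a r(a)\Big(\int_0^t\rho_s(a)\,ds\Big)^2 \le tL_\Phi R(t).
\]
Hence $R\equiv0$ on $[0,1/L_\Phi]$. Iterating over time intervals as in Theorem \ref{t:HDL:un} gives $\rho=0$.
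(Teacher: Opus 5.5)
Your proof is correct and is essentially the paper's argument. The paper, following \cite{CG}, reruns the energy method of Theorem \ref{t:HDL:un} with the test function $F_n(s)=\int_s^t\rho^n_r\,dr$, and the only change is an extra boundary term $\sum_a r(a)f(a)g(a)$ in the inner product, which is where your nonnegative term $\frac13\sum_a r(a)\big(\int_0^t\rho_s(a)\,ds\big)^2$ comes from. You are also right to read $\partial^\perp F_s(a)\rho_B(a)$ in \eqref{weak-form:R} as a typo for $\partial^\perp F_s(a)\rho_s(a)$ (this is what the term $\eta_s^N(a)\partial_N^\perp F_s(a)$ in \eqref{pfuk} produces in the limit), so the boundary terms cancel after Gauss--Green and your detour through approximants with prescribed normal derivatives is unnecessary, though harmless.
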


\begin{proof} In \cite[Section 7.2]{CG}  the proof for $\Phi = 0$ is given. It is a small modification of the proof in the case $b < \frac53$, which we have briefly recalled in the proof of Theorem \ref{t:HDL:un}. Essentially, the only change is in the inner product $\cE_1$ on $\cF$ (recall \eqref{cE1}), which in the current case $b \ge \frac53$ has an additional term at $V_0$ to account for the added boundary term in \eqref{weak-form:R}. Then, also for $b \ge \frac53$, our extension of the proof to $\Phi \neq 0$ in the case $b < \frac53$ applies verbatim.
\end{proof}

\subsection{The hydrodynamic limit}

The main result of Section \ref{s:b} is the following theorem. The only differences with Theorem \ref{t} are the range of $b$, the appearance of $r$, and the equation that $\rho_t$ satisfies. In any case, we state it in full for ease of reference.

\begin{thm}[Hydrodynamic limit] \label{t:R}
Let $b \geq \frac53$ and $\lambda_\pm, c_x$ be as in Section \ref{s:GK}. Let  
\[
r := \begin{cases}
  \lambda_+ + \lambda_-
  &\text{if } b = \frac53 \\
  0
  &\text{if } b > \frac53, 
\end{cases}
\]
$\rho_B := \frac{\lambda_+}{\lambda_+ + \lambda_-}$
and $\Phi : [0,1] \to \R$ be defined from $c_x$ as in \eqref{Phi}. Let $T > 0$ and $\rho_\circ $ be as in Section \ref{t:HDL:un} such that $0 \le \rho_\circ \le 1$. Then, there exists a unique weak solution $\rho_t$ of \eqref{HDL:R} with data $T, \rho_B, \rho_\circ, \Phi, r$ such that $\rho_t(x) \in [0,1]$ for all $x \in K$ and a.e.\ $t \in (0,T)$.
Moreover, for any $(\mu_N)_{N \geq 1}$ associated to $\rho_\circ$ (recall \eqref{asso-d:meas-s}), any $t \in [0,T]$, any $\e > 0$ and any $f \in C(K)$
\begin{equation*}
  \lim_{N \to \infty} \P_{\mu_N} \Big( \Big| \frac1{|V_N|} \sum_{x \in V_N} f(x) \eta_t^N(x) - \int_K f \rho_t \, dm \Big| > \e \Big) = 0.
\end{equation*}
\end{thm}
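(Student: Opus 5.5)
The proof of Theorem \ref{t:R} follows the scheme of Section \ref{s:pf}. The only new ingredients are the boundary terms coming from the Robin/Neumann condition; the Glauber part is handled exactly as for $b<\frac53$.

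\textbf{Step 1: lower bounds and replacement.} We first check that every bound involving $\cL_N^G$ is independent of $b$. Proposition \ref{p:DNG:LB}, the quadratic variation bound \eqref{pfui}, tightness, and the main replacement Lemma \ref{l:replacement} all use only two facts: the lower bound \eqref{pfvd} on the Dirichlet form, and the fact that $b^{-N}\cL_N^B$ contributes a nonnegative term \eqref{pfuo}. Both hold for every $b>0$. In particular, the proof of Lemma \ref{l:replacement} goes through verbatim, since the choice of $\varrho$ with $\varrho|_{V_0}=\rho_B$ and the entropy/Feynman--Kac argument never used $b<\frac53$. The a priori properties also carry over: $\rho_t\in[0,1]$, and $\rho\in L^2(0,T;\cF)$ $\Q$-a.s. by \cite[Proposition 6.3]{CG}, which again needs only \eqref{pfun}.

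\textbf{Step 2: the martingale with test functions in $\cC_T$.} Take $F\in\cC_T$, so $F_s$ need not vanish on $V_0$. In $M_t^N(F)$ we now have $\cL_N^B\pi_s^N(F_s)\neq0$; an elementary computation gives
\[
5^Nb^{-N}\cL_N^B\pi_s^N(F_s)=\frac{(5/b)^N}{|V_N|}\sum_{a\in V_0}\big(\lambda_+(a)-(\lambda_+(a)+\lambda_-(a))\eta_s^N(a)\big)F_s(a).
\]
We also keep the Kawasaki boundary term $-\frac{3^N}{|V_N|}\sum_a\eta_s^N(a)\partial_N^\perp F_s(a)$. Since $(5/b)^N/|V_N|\sim\frac23(5/(3b))^N$, the Glauber boundary term tends to $\frac23\sum_a r(a)(\rho_B(a)-\eta(a))F_s(a)$ when $b=\frac53$ and vanishes when $b>\frac53$. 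In either case it produces exactly the $r$-term in \eqref{weak-form:R} once $\eta_s^N(a)$ is replaced by $\pi_s^N(\iota^k_{w^a})$. This boundary replacement is the one carried out in \cite[Section 6]{CG} for $b\geq\frac53$, and it uses only \eqref{pfun}. The quadratic variation acquires an extra term of order $(5/b)^N|V_N|^{-2}\|F\|_\infty^2\to0$, so \eqref{pfur} still holds.

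\textbf{Step 3: identifying the limit.} After the bulk replacements ($\Delta_N^0\to\frac23\Delta$, the Glauber replacement via Lemma \ref{l:replacement}, $\iota_w\to\iota_w^k$) and the boundary replacements above, we obtain a Skorokhod-continuous functional $\Psi$. Passing to $\Q$ and letting $M,k\to\infty$, and using the continuity of $\rho_s\in\cF\subset C(K)$ to replace $\ov\rho_s^{w^a,k}$ by $\rho_s(a)$, we get $\Q(\sup_t|\Theta_t^r(\rho,F)|>\e)=0$. Unlike the Dirichlet case, no analogue of \eqref{pftx} is needed. Uniqueness from Theorem \ref{t:HDL:R:un}, which applies because $\Phi$ is Lipschitz on $[0,1]$, gives $\Q=\delta_\rho$ and hence the convergence of the full sequence.

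\textbf{Main obstacle.} The delicate point is replacing the single-site variable $\eta_s^N(a)$ by a local average near $a$ in the Robin term at the critical scaling $b=\frac53$. I would import \cite[Lemma 6.9]{CG} and verify that its proof requires nothing from $\cL_N^G$ beyond Proposition \ref{p:DNG:LB}. If it turns out to need the harmonic $\varrho$, I would use that choice of $\varrho$ for the boundary replacement and the modified $\varrho$ from Section \ref{s:repl} only for Lemma \ref{l:replacement}. This is legitimate because each replacement is an independent estimate.
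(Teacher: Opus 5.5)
Your overall architecture matches the paper's, and your Step 1 is correct. For any profile with $\varrho|_{V_0}=\rho_B$, the reservoir part of the Dirichlet form is nonnegative for every $b>0$. Hence \eqref{pfvd}, Lemma \ref{l:replacement}, tightness and the energy estimate all carry over unchanged.

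The gap is in Step 2 and in your ``main obstacle'' paragraph. You import from \cite{CG} the replacement of $\eta_s^N(a)$ by a local average (the paper uses \cite[Lemma 5.5]{CG}), but that replacement does not run on \eqref{pfun}. It is proved for a \emph{constant} reference density, because it rests on the moving particle lemma for the homogeneous measure $\nu_\rho$. This is the same reason Section \ref{s:repl} had to make $\varrho$ constant on $K^I$. Both the harmonic $\varrho$ and the $N_B$-harmonic $\varrho$ of Section \ref{s:repl} are non-constant near the points $a\in V_0$, which is exactly where this replacement takes place. Your fallback of switching to the harmonic $\varrho$ therefore points in the wrong direction.

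With a constant $\varrho$ one generally has $\varrho(a)\neq\rho_B(a)$. Then $\nu_\varrho$ is not reversible for the reservoirs, and the boundary term of the Dirichlet form is no longer nonnegative. \cite[Lemma 5.3]{CG} only gives $\lrang{\psi,-\cL_N^K\psi}_{\nu_\varrho}+\lrang{\psi,-b^{-N}\cL_N^B\psi}_{\nu_\varrho}\ge\Gamma_N(\psi^2,\varrho)-Cb^{-N}$. Combine this with Proposition \ref{p:DNG:LB}, whose proof only uses $\rho_*\le\varrho\le\rho^*$. The result is \eqref{pfts}, and it holds precisely because $b\ge\frac53$ gives $b^{-N}\le(3/5)^N$. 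That is the size the entropy/Feynman--Kac step can absorb after multiplication by $5^N/|V_N|$; for $b<\frac53$ this error would blow up.

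This is the one place where $b\ge\frac53$ enters the Dirichlet-form estimates, and it is absent from your argument. You use $b\ge\frac53$ only for the limit of the reservoir term and for the quadratic variation. Once you use the constant $\varrho$ and \eqref{pfts} for the boundary replacement, your remark that each replacement may use its own reference measure makes the rest of your proof go through. The paper instead switches to the constant $\varrho$ for $b\ge\frac53$ throughout. It also remarks that the constant $\varrho$ can then be used in Lemma \ref{l:replacement}, which dispenses with $N_B$.
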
 

In the remainder of Section \ref{s:b}, we prove Theorem \ref{t:R}. The proof is very similar to that of Theorem \ref{t}; we only focus on the modifications, and otherwise follow the argumentation in Sections \ref{s:pf} and \ref{s:repl} without mentioning it.

\subsection{A different choice of $\varrho$}

While in Section \ref{s:pf:varrCG} we took $\varrho$ to be harmonic, we take it here to be even simpler: a constant function. The precise value does not matter; we take
\[
  \varrho = \frac13 \sum_{a \in V_0} \rho_B(a)
\]
as the value $\rho$ in \eqref{pfty}. 

As a consequence, the boundary values of $\varrho$ may not match with $\rho_B$. This has no consequence for the validity of Proposition \ref{p:DNG:LB} on the lower bound of the Dirichlet form of the Glauber part, but the lower bounds in \eqref{pfup} and \eqref{pfuo} on the Dirichlet form of the Kawasaki and boundary parts change to (see \cite[Lemma 5.3]{CG})
\begin{align*}
  \lrang{ \psi, - \cL_N^K  \psi }_{\nu_\varrho} + \lrang{ \psi, - b^{-N} \cL_N^B  \psi }_{\nu_\varrho}
  &\geq \Gamma_N (\psi^2, \varrho) - \frac C{b^N}
\end{align*}
for some constant $C = C(\varrho, \lambda_\pm) > 0$. Then, since $b \geq \frac53$, the lower bound on the Dirichlet form $\cD_N(\psi^2, \varrho)$ in \eqref{pfun} changes to
\begin{align} \label{pfts} 
  \cD_N(\psi^2, \varrho)
  \geq \Gamma_N (\psi^2, \varrho) - C \Big( \frac35 \Big)^N,
\end{align}
i.e., \ a nonnegative term of the boundary contribution has vanished from the right-hand side. Fortunately, we do not rely on this boundary contribution in the proof of the replacement Lemma \ref{l:replacement} on the Glauber term, and thus the ramifications of this weaker lower bound are confined to the replacement lemmas on the Kawasaki and the boundary parts.

\subsection{The associated martingale}

The difference with the corresponding Section \ref{s:pf:mg} is that $F_s(a)$ need not be $0$. For the Kawasaki and boundary parts given by $\cL_N^K$ and $\cL_N^B$, the corresponding computations are done in \cite[Section 3.4]{CG}. For the Glauber part $\cL_N^G$, the value of $F_s(a)$ has no effect on its contribution to the martingale $M_t^N(F)$ or to its quadratic variation.
From these facts, we obtain that \eqref{pfzz} becomes
\begin{multline} \label{pfuk}
  5^N \cL_N \pi_s^N(F_s)
  = \frac1{|V_N|} \sum_{x \in V_N^0} \Big( \eta_s^N (x) \Delta_N F_s(x) + c_x(\eta_s^N) (1 - 2 \eta_s^N (x)) F_s(x) \Big) \\
  - \frac{3^N}{|V_N|} \sum_{a \in V_0} \Big( \eta_s^N (a) \partial_N^\perp F_s(a) + \Big( \frac{5}{3b} \Big)^N F_s(a) (\lambda_+ (a) + \lambda_- (a))  ( \eta_s^N (a) - \rho_B(a) ) \Big)
\end{multline}
and that \eqref{pfuj} becomes
\begin{align*}
  \lrang{M^N(F)}_t
  &= \int_0^t \frac{5^N}{|V_N|^2} \sum_{x \in V_N} \sum_{ \substack{x \in V_N \\ y \sim x} } \big[ \eta_s^N (x) - \eta_s^N (y) \big]^2 [ F_s(x) - F_s(y) ]^2 ds \\
  &\quad + \int_0^t \frac2{|V_N|^2} \sum_{x \in V_N^0} c_x(\eta_s^N) (1 - \eta_s^N(x)) F_s(x)^2 \, ds  \\
  &\quad + \int_0^t \Big( \frac{5}{b} \Big)^N \frac1{|V_N|^2} \sum_{a \in V_0}  \big(  \lambda_-(a) \eta_s^N (a) + \lambda_+(a) (1 - \eta_s^N (a)) \big) F_s(a)^2 \, ds. 
\end{align*}
Regarding the third term, since $b \geq \frac53$, we have $(\frac{5}{b})^N |V_N|^{-2} \leq C 3^{-N}$, and thus the previous bound in \eqref{pfui} on $\lrang{M^N(F)}_t$ remains valid.

\subsection{The limiting density $\rho$ is a weak solution}

Analogously to Section \ref{s:pf}, we obtain that $\Q_N$ is tight, and that any limit point $\Q$ is concentrated on paths $\pi_\bullet$ with density $\rho \in L^2(0,T; \cF)$. The main statement to be shown is the counterpart of \eqref{pfzw}, which is here given by
\begin{equation} \label{pftt}
   \Q \Big( \sup_{0 \leq t \leq T} |\Theta_t^r(\rho, F)| > \e \Big) = 0,
\end{equation}
where $\Theta_t^r(\rho, F)$ is the weak form in \eqref{weak-form:R}. 

The only required modification to the proof of \eqref{pftt} as given in Section \ref{s:pf:wSol} are the replacements of the boundary term under $\Q_N$. For the boundary term, note that $M_t^N(F)$ has an additional term corresponding to the last term in \eqref{pfuk} which contains $F_s(a)$. The corresponding replacement is detailed in the proof of \cite[Proposition 6.7]{CG}. The main step is the replacement lemma given by \cite[Lemma 5.5]{CG}, which requires $\varrho$ to be constant. Similar to the argument above \eqref{pfue}, even though \cite[Lemma 5.5]{CG} is stated for $\cL_N$ without the Glauber term, its proof applies to our setting if $\cD_N(\psi^2, \varrho)$ is sufficiently bounded from below. From the end of the proof of \cite[Lemma 5.5]{CG} it is clear that our lower bound in \eqref{pfts} is sufficient.
This completes the proof of Theorem \ref{t:R}.

We end with a remark on a simplification of the proof of the replacement of the Glauber term, which is made possible by the lower bound in \eqref{pfts} on $\cD_N(\psi^2, \varrho)$ in the case $b \geq \frac53$. Then, we may use the constant function $\varrho$ in the proof of Lemma \ref{l:replacement}. With this choice of $\varrho$ there is no need to work with $N_B$ and to treat separately the contributions on the thick boundary $K^B$ and the reduced interior $K^I$.

\section*{Acknowledgements}

PvM has received financial support from JSPS KAKENHI Grant Numbers JP20K14358 and JP24K06843.
KT has received financial support from JSPS KAKENHI Grant Number JP22K13929.


\begin{thebibliography}{99}

%
%


\bibitem{Chen}{\sc J.\ P.\ Chen},
{\it Local ergodicity in the exclusion process on an infinite weighted graph},
arXiv:1705.10290 (2017), 36 pp.

\bibitem{Chen2}{\sc J.\ P.\ Chen},
{\it The moving particle lemma for the exclusion process on a weighted graph},
Electron. Commun. Probab., \textbf{22}(47) (2017), 13 pp.

\bibitem{CG}{\sc J.\ P.\ Chen and P.\ Gon\'calves},
{\it Asymptotic behavior of density in the boundary-driven exclusion process on the Sierpinski gasket}.
Math. Phys. Anal. Geom., \textbf{24} (2021), 1--65. 

\bibitem{DFL}{\sc A.\ De Masi, P.\ A.\ Ferrari, and J.\ L.\ Lebowitz},
{\it reaction--diffusion equations for interacting particle systems}.
J. Stat. Phys., \textbf{44} (1986), 589--644.

%
%
%

\bibitem{FLT}{\sc J.\ Farfan, C.\ Landim and K.\ Tsunoda},
{\it Static large deviations for a reaction--diffusion model},
Probab. Theory Relat. Fields, \textbf{174} (2019), 49--101. 

%
%
%

\bibitem{vGR}{\sc B.\ van Ginkel and F.\ Redig},
{\it Hydrodynamic limit of the symmetric exclusion process on a compact Riemannian manifold}.
J. Stat. Phys., \textbf{178} (2020), 75--116.

\bibitem{Jar}{\sc M.\ Jara},
{\it Hydrodynamic limit for a zero-range process in the {S}ierpinski gasket},
Comm. Math. Phys., \textbf{288} (2009), 773--797.


\bibitem{JLS}{\sc M.\ Jara, C.\ Landim and S.\ Sethuraman},
{\it Nonequilibrium fluctuations for a tagged particle in mean-zero one-dimensional zero-range processes},
Probab. Theory Relat. Fields, \textbf{145} (2009), 565--590.

\bibitem{JLS2}{\sc M.\ Jara, C.\ Landim and S.\ Sethuraman},
{\it Nonequilibrium fluctuations for a tagged particle in one-dimensional sublinear zero-range processes}.
Ann. Inst. Henri Poincar\'e Probab. Stat., \textbf{49} (2013), 611--637. 

%
%

\bibitem{JLV}{\sc G.\ Jona-Lasinio, C.\ Landim and M.E.\ Vares},
{\it Large deviations for a reaction diffusion model},
Probab. Theory Relat. Fields, \textbf{97} (1993), 339--361. 

\bibitem{JRV}{\sc J.\ Junn\'e, F.\ Redig and R.\ Versendaal},
{\it Hydrodynamic limit of the symmetric exclusion process on complete Riemannian manifolds and principal bundles},
arXiv:2410.20167 (2024), 24 pp.

\bibitem{Kig}{\sc J.\ Kigami},
{\it Analysis on Fractals},
Cambridge University Press, Cambridge, UK, (2001).

\bibitem{KOV}{\sc C.\ Kipnis, S.\ Olla and S.R.S. \ Varadhan},
{\it Hydrodynamics and large deviation for simple exclusion processes},
Comm. Pure Appl. Math., \textbf{42} (1989), 115--137.

\bibitem{KL}{\sc C.\ Kipnis and C.\ Landim},
{\it Scaling Limits of Interacting Particle Systems},
Grundlehren der Mathematischen Wissenschaften, 
Heidelberg, \textbf{320} (2012), 491 pp.

\bibitem{LT}{\sc C.\ Landim and K.\ Tsunoda},
{\it Hydrostatics and dynamical large deviations for a reaction--diffusion model}.
Ann. Inst. Henri Poincar\'e Probab. Stat., \textbf{54} (2018), 51--74. 

%
%
%
%

\bibitem{Str}{\sc R.\ S.\ Strichartz},
{\it Differential Equations on Fractals: A Tutorial},
Princeton University Press, Princeton, NJ, (2006).

\bibitem{Tan}{\sc R.\ Tanaka},
{\it Hydrodynamic limit for weakly asymmetric simple exclusion processes in crystal lattices},
Comm. Math. Phys., \textbf{315} (2012), 603--641.

%





\end{thebibliography}
\end{document}